\newtheorem{theorem}{Theorem}[section]
\newtheorem{lemma}[theorem]{Lemma}
\newtheorem{definition}[theorem]{Definition}
\newtheorem{question}{Question}
\newtheorem*{theorem*}{Theorem}
\newtheorem{claim}{Claim}
\newcommand{\forceP}{\mathbb{P}}
\newcommand{\forceQ}{\mathbb{Q}}
\newcommand{\ZFC}{\mathsf{ZFC}}
\newcommand{\ZFP}{\mathsf{ZF}^-}
\newcommand{\CH}{\mathsf{CH}}
\newcommand{\PD}{\mathsf{PD}}
\def\undertilde#1{\mathord{\vtop{\ialign{##\crcr
$\hfil\displaystyle{#1}\hfil$\crcr\noalign{\kern1.5pt\nointerlineskip}
$\hfil\tilde{}\hfil$\crcr\noalign{\kern1.5pt}}}}}
\title{A Failure of $\Pi^1_{n+3}$-Reduction in the Presence of $\Sigma^1_{n+3}$-Separation}
\author{ Stefan Hoffelner\footnote{This research was funded in parts by the Austrian Science Fund (FWF) Grant-DOI 10.55776/P37228. Additional support by the Deutsche Forschungsgemeinschaft (DFG German Research Foundation) under Germany's Excellence Strategy EXC 2044 390685587, Mathematics M\"unster: Dynamics-Geometry-Structure. For the purpose of open access, the authors have applied a CC BY public copyright license to any Author Accepted Manuscript version arising from this submission.}  }
\begin{document}

\maketitle

\begin{abstract}
We show that one can force over $L$ that $\Sigma^1_3$-separation holds, while $\Pi^1_3$-reduction fails, thus separating these two principles for the first time. The construction can be lifted to canonical inner models $M_n$ with $n$-many Woodin cardinals, yielding that, assuming the existence of $M_n$,  $\Sigma^1_{n+3}$-separation can hold, yet $\Pi^1_{n+3}$-reduction fails.
\end{abstract}

\section{Introduction}

Descriptive Set Theory serves as a fundamental framework for investigating the structure and properties of sets of real numbers. Two central concepts within this theory, the Separation Property, introduced in the early 1920s and the Reduction Property, introduced by Kuratowski in the mid-1930's, have garnered significant attention due to their profound implications for properties of projective subsets of the real numbers.

\begin{definition}
 We say that a projective pointclass $\Gamma \in \{ \bf{\Sigma}^1_n$ $ \mid n \in \omega\} \cup \{\bf{\Pi}^1_n$ $ \mid n \in \omega \}$ $\cup \,\{ {\Sigma}^1_n$ $ \mid n \in \omega\} \cup \{{\Pi}^1_n$ $ \mid n \in \omega \}$ has the separation 
 property (or just separation) iff every pair $A_0$ and $A_1$ of disjoint elements of  $\Gamma$ has a separating set $C$, i.e. a set $C$ such that $A_0 \subset C$ and $A_1 \subset \omega^{\omega} \setminus C$ and such that $C
   \in \Gamma \cap \check{\Gamma}$, where  $\check{\Gamma}$ denotes the dual pointclass of $\Gamma$.
\end{definition}

\begin{definition}
We say that a projective pointclass $\Gamma \in \{ \bf{\Sigma}^1_n$ $ \mid n \in \omega\} \cup \{\bf{\Pi}^1_n$ $ \mid n \in \omega \}$ $\cup \, \{ {\Sigma}^1_n$ $ \mid n \in \omega\} \cup \{{\Pi}^1_n$ $ \mid n \in \omega \}$ satisfies the $\Gamma$-reduction property (or just reduction) if every pair $B_0,B_1$ of $\Gamma$-subsets of the reals can be reduced by a pair of $\Gamma$-sets $R_0,R_1$, which means that $R_0 \subset B_0$, $R_1 \subset B_1$, $R_0 \cap R_1= \emptyset$ and $R_0 \cup R_1=B_0 \cup B_1$.
\end{definition}

It follows immediately from the definitions that $\Gamma$-reduction implies $\check{\Gamma}$-separation. It is very natural to ask whether the reverse direction is also true.

Since their introduction, many results have been proved which shed light on how  separation and reduction can behave among the projective pointclasses. These results can be obtained using two very different set theoretic assumptions, which draw very different scenarios of the properties of separation and reduction.

The first assumption is $V=L$, or rather the existence of a $\Sigma^1_2$-definable, good projective well-order of the reals. Recall that a good $\Sigma^1_2$-definable well-order is a $\Sigma^1_2$-well-order with the additional property that 
also the relation $\operatorname{InSeg} (x,y) :\Leftrightarrow \{ (x)_i \mid i \in \omega \} = \{z  \mid z <_L y \}$ is $\Sigma^1_2$, where $(x)_i$ denotes some recursive decoding of $x$ into $\omega$-many reals, $(x)_i$ being the $i$-th real decoded out of $r$. By results of J. Addison (\cite{Addison}) the existence of a good $\Sigma^1_n$-well-order implies $\Sigma^1_m$-uniformization for every $m \ge n$. As $\Sigma^1_m$-uniformization implies $\Sigma^1_m$ reduction, the assumption of $V=L$ implies that for every $n \ge 1$, $\Sigma^1_n$-reduction and $\Pi^1_n$-separation is true (the case $n=1$ follows from Kondo's theorem that $\Pi^1_1$-uniformization is true).

The second assumption which settles the behaviour of reduction and separation on the projective hierarchy is projective determinacy ($\mathsf{PD}$). By the results of Y. Moschovakis (\cite{Moschovakis2}), under $\mathsf{PD}$, for every $n \in \omega$, $\Pi^1_{2n+1}$ and $\Sigma^1_{2n+2}$ sets have the scale property, which in particular implies that $\Pi^1_{2n+1}$ and $\Sigma^1_{2n+2}$ sets have the uniformization property and so $\Pi^1_{2n+1}$ and $\Sigma^1_{2n+2}$ reduction is true. By the famous theorems of D. Martin and J. Steel (see \cite{MS}) on the one hand, and H. Woodin (see \cite {MSW}) on the other hand, determinacy assumptions on projective sets and large cardinal assumptions are two sides of the very same coin.

Note that under $V=L$ and also under $\PD$, $\Gamma$ separation holds because already the stronger $\check{\Gamma}$-reduction (in fact $\check{\Gamma}$-uniformization) holds. So these results do not 
shed light on the question stated above, whether $\Gamma$-separation and $\check{\Gamma}$-reduction are different properties at all. A partial answer to the question was first given by R. Sami in his PhD thesis from 1976 \cite{Sami}. In it he showed (among other interesting results) that
after adding a single Cohen real to $L$, the resulting universe will satisfy that
$\Pi^1_3$-separation holds, yet $\Sigma^1_3$-reduction fails. And additionally $\Sigma^1_n$-reduction holds again for $n \ge 4$. His results inspired L. Harrington to produce a model in which $\bf{\Pi}^1_3$-separation holds but there
is a (lightface) $\Sigma^1_3$ set, which can not be reduced by any pair of $\bf{\Sigma}^1_3$-sets, thus $\Sigma^1_3$-reduction fails (a write up of Harrington's proof can be found in \cite{KL}).

The question for the other side of the projective hierarchy, namely for $\Sigma^1_3$-separation and $\Pi^1_3$-reduction remained open though since then.

The goal of this article is to produce the counterpart to L. Harrington's result:
\begin{theorem*}
One can force over $L$ a model of $\bf{\Sigma}^1_3$-separation over which there is a pair of $\Pi^1_3$-sets, which can not be reduced by any pair of $\bf{\Pi}^1_3$-sets.
\end{theorem*}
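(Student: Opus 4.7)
The strategy is to perform a class-length iteration over $L$, in the spirit of Jensen-style coding, producing a model $W$ in which two parallel tasks are accomplished: (i) enough definable structure is coded into the reals to witness $\bf{\Sigma}^1_3$-separation for every disjoint pair, and (ii) a specific pair $B_0, B_1 \in \Pi^1_3$, arising from the iteration itself, is arranged so that its reduction is obstructed at cofinally many stages. The bookkeeping is set up so that neither task interferes with the other: the coding that supports separation is chosen weak enough that it does not yield a $\Sigma^1_3$-good well-order of the reals (which would give $\Pi^1_3$-uniformization by Addison's theorem and defeat (ii)).

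For the positive side, I would aim to produce a $\Sigma^1_3$-definable ladder of $\bf{\Pi}^1_2$-sets $\{C_\alpha : \alpha<\omega_1\}$ that covers every $\bf{\Sigma}^1_3$-set by $\bf{\Delta}^1_3$-slices. Given disjoint $\bf{\Sigma}^1_3$-sets $A_0, A_1$, one then separates $A_0 \cap C_\alpha$ from $A_1 \cap C_\alpha$ at each level $\alpha$ using the $\bf{\Delta}^1_3$ structure and glues the pieces by a length-$\omega_1$ recursion to obtain a $\bf{\Delta}^1_3$ separating set. This is strictly weaker than a $\Sigma^1_3$-well-order, since the map $\alpha \mapsto C_\alpha$ will not be chosen $\Sigma^1_3$-injective. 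For the negative side, I would design the iteration so that the generic activity codes, at a predetermined class of stages, a pair of mutually competing "marker" reals $(e^\alpha_0, e^\alpha_1)$ each detectable by a single $\Pi^1_3$-formula, giving the desired $B_0, B_1$. A candidate reducing pair $(R_0, R_1)$ of $\bf{\Pi}^1_3$-sets with boldface parameter $p$ would have to make a uniform definable choice between $e^\alpha_0$ and $e^\alpha_1$ at cofinally many $\alpha$; a density argument against the iteration's natural forcing notion shows that no fixed $p$ can succeed, analogously (but dually) to the Sami--Harrington argument where a Cohen real defeats a $\Sigma^1_3$-reduction.

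The main obstacle is the delicate balance between these two requirements. Every coding method I know of that is strong enough to produce the $\bf{\Sigma}^1_3$-separation ladder is at the same time dangerously close to producing a $\Sigma^1_3$ good well-order or a $\Pi^1_3$-uniformization, either of which would restore $\Pi^1_3$-reduction and kill the theorem. The technical heart of the proof is therefore a careful design of the iteration and its coding predicates so that the $\bf{\Sigma}^1_3$-definable levels $C_\alpha$ are rich enough to separate but ambiguous enough to prevent a $\Pi^1_3$-selector, and simultaneously so that the bad pair $(B_0,B_1)$ remains genuinely immune to any boldface $\Pi^1_3$-reduction throughout the class iteration — with the usual preservation lemmas (for $\Sigma^1_2$-absoluteness at successor stages and for the generic structure at limits) ensuring the two properties lift to the final model.
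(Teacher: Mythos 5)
Your overall architecture (one iteration over $L$ doing double duty: coding for separation on some stages, diagonalizing against candidate reducing pairs on others) matches the paper's in spirit, but both halves of your plan have genuine gaps. For the separation half, your ``ladder of $\bf{\Pi}^1_2$-sets $C_\alpha$ with $\bf{\Delta}^1_3$-slices glued by a length-$\omega_1$ recursion'' does not produce a definable separating set: projective pointclasses are not closed under well-ordered unions of length $\omega_1$, and you give no mechanism by which the glued object is again $\bf{\Delta}^1_3$. The paper avoids this entirely. It fixes a $\Sigma_1(\{\omega_1\})$-definable independent sequence of Suslin trees in $L$ and, for each real $x$ and each pair $A_m(y),A_k(y)$ handed over by the bookkeeping, codes the quadruple $(x,y,m,k)$ into exactly one of two halves $\vec{S}^0,\vec{S}^1$ of that sequence via almost disjoint coding. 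The separating sets $D^0_{y,m,k}(R)$ and $D^1_{y,m,k}(R)$ are then each defined by a \emph{single} uniform $\Sigma^1_3$ formula (a localized, $\Pi^1_2$-over-countable-models coding statement with an ``error term'' real $R$), and they are complementary by construction, which is exactly where $\bf{\Delta}^1_3$-ness comes from. The real work — deciding for each $x$ which side to code it into so that $A_m(y)$ lands on side $0$ and $A_k(y)$ on side $1$, and proving these decisions are never contradicted later — rests on a hierarchy of ``$\alpha$-allowable'' forcings, a closure-under-products lemma for them, and upward $\Sigma^1_3$-absoluteness; none of this appears in your sketch, and without some such consistency mechanism the side-assignment can be incoherent across stages.

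For the anti-reduction half, your ``density argument against the iteration's natural forcing notion'' is not the right shape: a candidate reducing pair $B_m(y),B_k(y)$ is evaluated in the \emph{final} model, and membership in a $\bf{\Pi}^1_3$ set can still change (a real can drop out) at later stages of the iteration, so defeating the pair at one stage does not automatically persist. The paper handles this by, at the stage where $(m,k,\dot y)$ is first considered, adding witness reals $x_0,x_1$ whose membership in the fixed lightface pair $B_0,B_1$ is then \emph{frozen forever} (by forbidding the corresponding coding forcings in all future allowable forcings), followed by a case analysis showing that every way $x_0,x_1$ can subsequently migrate out of $B_m(y),B_k(y)$ (the only direction permitted by upward $\Sigma^1_3$-absoluteness) still violates one of the two clauses of reduction ($R_0\cap R_1=\emptyset$ or $R_0\cup R_1=B_0\cup B_1$). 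This persistence-through-freezing argument is the crux of the negative side and is missing from your proposal. Your instinct that one must avoid accidentally creating a good $\Sigma^1_3$ well-order is sound, but the paper's answer is a ``no unwanted codes'' lemma showing the iteration creates exactly the intended $\Sigma^1_3$ facts and nothing more, rather than any deliberate weakening of the coding.
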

We add that our result uses completely different techniques and methods than Sami's and Harrington's theorems.
It follows from a modification of the arguments from \cite{Ho2}, that the proofs can be transferred from $L$ to $M_n$, the canonical inner model with $n$-many Woodin cardinals.

\begin{theorem*}
Assuming that $M_n$ exists, there is a model of $\bf{\Sigma}^1_{n+3}$-separation over which there is a pair of $\Pi^1_{n+3}$-sets, which can not be reduced by any pair of $\bf{\Pi}^1_{n+3}$-sets.

\end{theorem*}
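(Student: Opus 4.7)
The plan is to replay the forcing construction that produces the $L$-model inside the canonical inner model $M_n$, exploiting the fact that $M_n$ carries a good $\Sigma^1_{n+2}$-definable wellorder of its reals. This wellorder plays exactly the role that the good $\Sigma^1_2$-wellorder of $L$ played in the previous theorem, so every definability computation used in that proof gets shifted up by $n$ levels. Concretely, I would take the forcing iteration $\forceP$ employed over $L$ to witness the failure of $\Pi^1_3$-reduction while preserving $\boldsymbol{\Sigma}^1_3$-separation, reinterpret all of its coding machinery (almost-disjoint coding, localisation, witness coding, etc.) inside $M_n$, and replace every appeal to the canonical $L$-order by the analogous $M_n$-order. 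The output is then a forcing extension $M_n[G]$ in which a specific pair of $\Pi^1_{n+3}$-sets, defined by reading off the generic together with an evaluation of the $M_n$-wellorder, is the intended witness.

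For the positive half, the $\boldsymbol{\Sigma}^1_{n+3}$-separation proof should transfer verbatim once all complexities are shifted. The key point is that in $M_n[G]$ one still has a $\Sigma^1_{n+2}$-definable wellorder of the reals (this being the content that the modifications of \cite{Ho2} are designed to secure under iterated coding above $n$ Woodin cardinals), and this feeds into the standard argument that any two disjoint $\boldsymbol{\Sigma}^1_{n+3}$-sets can be separated by a $\Delta^1_{n+3}$-set defined through a canonical code search along the wellorder. For the negative half, one repeats the diagonal argument used over $L$: any putative $\boldsymbol{\Pi}^1_{n+3}$-reduction $(R_0,R_1)$ of the target pair is parametrised by some real, and the forcing has been arranged so that along the $M_n$-wellorder one can read off an element on which the reduction must fail, using the shifted $\Sigma^1_{n+2}$-definability of the wellorder relative to this parameter.

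The principal obstacle — and the reason the transfer is nontrivial rather than purely cosmetic — is to certify that the coding forcing preserves enough of the $M_n$-structure, in particular the iterability of $M_n$ and the required degree of $\Sigma^1_{n+3}$-correctness between $M_n[G]$ and its background universe $V$, so that the $\Sigma^1_{n+2}$-wellorder computation and the absoluteness arguments feeding the separation property remain valid. This is precisely the content of the framework developed in \cite{Ho2}: the almost-disjoint and coding steps can be arranged to respect the Woodin cardinals of $M_n$ and to interact correctly with the iteration strategy, so that the canonical wellorder survives into the extension at the expected complexity. Once these preservation lemmas are in place, the remaining bookkeeping is a level-by-level transcription of the $L$-proof, with $3$ replaced uniformly by $n+3$ and $2$ by $n+2$ in every definability estimate.
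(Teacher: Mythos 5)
Your overall strategy---rerun the $L$-iteration over $M_n$ and invoke the preservation and correctness framework of \cite{Ho2} to shift every definability computation up by $n$ levels---is exactly the route the paper takes (the paper itself offers no more than a pointer to \cite{Ho2}). The genuine problem is your account of the positive half. You claim that $M_n[G]$ retains a $\Sigma^1_{n+2}$-definable wellorder of its reals and that $\boldsymbol{\Sigma}^1_{n+3}$-separation then follows ``by a canonical code search along the wellorder.'' Both parts of this are wrong. First, by the Mansfield--Solovay-type theorem at higher levels, a $\Sigma^1_{n+2}$ wellordering of the reals forces all reals into $M_n$; the iteration adds $\aleph_1$-many new reals, so no such wellorder can survive into the final model. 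Second, and more fundamentally, a good $\Sigma^1_{n+2}$ wellorder yields $\Sigma^1_m$-uniformization, hence $\Sigma^1_m$-reduction, for $m \ge n+2$ (Addison), and reduction for a pointclass with a universal set implies the \emph{failure} of separation for that same pointclass. So in $M_n$ itself $\boldsymbol{\Sigma}^1_{n+3}$-separation is false: the canonical wellorder is precisely the obstruction the forcing must overcome, not the engine of the separation proof. A wellorder-based code search is the mechanism for reduction on the $\Sigma$-side, i.e.\ for the dual of what is needed here.

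The actual role of $M_n$ and its definable structure in the lift is different: it is needed so that the localized coding statement $({\ast}{\ast}{\ast})$---``every countable transitive model containing the witness $r$ and satisfying the appropriate correctness condition (replacing $\omega_1^M=(\omega_1^L)^M$ by its $M_n$-analogue) verifies the branch pattern through $\vec{S}$''---lands at complexity $\Sigma^1_{n+3}$ rather than $\Sigma^1_3$, which requires that countable models can correctly compute the relevant initial segments of $M_n$ and the tree sequence; this is where iterability preservation enters. The separating sets in the final model are still the sets $D^0_{y,m,k}(R)$ and $D^1_{y,m,k}(R)$ produced by the odd stages of the iteration and the $\alpha$-allowable machinery, with disjointness and covering proved by the product-and-absoluteness arguments of the final section, not by any search along a wellorder. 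Your negative half is closer to the mark, but it, too, should be phrased as the forcing diagonalization against all names $(m,k,\dot{y})$ at even stages rather than as reading off a failure point from the wellorder.
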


The proof relies on the construction and ideas from \cite{Ho1}, where a universe with the $\bf{\Sigma}^1_3$-separation property is produced via forcing over $L$. However, we will introduce some simplifications of the original argument, yielding a cleaner presentation. We will use the coding machinery from \cite{Ho3} which is basically the same as in \cite{Ho2}, though we opted to present it slightly differently to reduce the potential of confusion. In \cite{Ho3} a universe is forced over $L$, where $\Pi^1_3$-reduction holds but the stronger $\Pi^1_3$-uniformization fails. Our proof presented here will take a quite different direction though and uses a more direct diagonalization argument, where we actively work towards two $\Pi^1_3$-sets $B_0, B_1$ which can not be reduced by a pair of $\bf{\Pi}^1_3$-sets. This idea of actively building a pair of projective sets which witness the failure of separation is further pursued, in a considerably easier setting in \cite{HOFFELNER2026103667}, where a universe is built where separation fails on both sides of the third projective level, in the presence of a weaker form of Martin's axiom.  As we simultaneously have to work towards a stronger failure of $\Pi^1_3$-reduction, we need to substantially alter the original definitions and arguments for forcing the $\bf{\Sigma}^1_3$-separation. The underlying theme in all these forcing constructions is to use a coding machine which, to some extent, is closed under taking products. This approach is in some or another form necessary as became clear in hindsight with the construction presented in \cite{HOFFELNER2025110272}, which forces the $\Sigma^1_{n+2}$-uniformization property for all $n$ simultaneously and is very flexible. There is a second flexible method producing just $\Sigma^1_3$-uniformization (see \cite{HOFFELNER2024103466}), which can be used to show that, somewhat surprisingly on first sight, $\mathsf{BPFA}$ and $\aleph_1=\aleph_1^L$ together outright imply $\Sigma^1_3$-uniformization. Thus any way to force $\Sigma^1_3$-separation must use a technique which can not be married with the ones from \cite{HOFFELNER2025110272} and \cite{HOFFELNER2024103466}. Being closed under products neatly destroys any possibility of such a marriage.

\subsection{Notation}
The notation we use will be mostly standard, we hope. Diverging from standard conventions we write $\forceP=(\forceP_{\alpha} \, : \, \alpha < \gamma)$ for a forcing iteration of length $\gamma$ with initial segments $\forceP_{\alpha}$. The $\alpha$-th factor of the iteration will be denoted with $\forceP(\alpha)$, this is nonstandard as typically one writes $\dot{\forceQ}_{\alpha}$. Note here that we drop the dot on $\forceP(\alpha)$, even though $\forceP(\alpha)$ is in fact a $\forceP_{\alpha}$-name of a partial order. 
If $\alpha' < \alpha < \gamma$, then we write $\forceP_{\alpha' ,\alpha}$ to denote the intermediate forcing of $\forceP$ which happens in the interval $[\alpha',\alpha)$, i.e. $\forceP_{\alpha', \alpha}$ is such that 
$\forceP \cong \forceP_{\alpha'} \ast \forceP_{\alpha', \alpha}$.

We write $\Sigma_n(X)$, for $X$ an arbitrary set, to denote the set of formulas which are $\Sigma_n$ and use elements from $X$ as a parameter.

We write $1 \Vdash \varphi$ whenever every condition in $\forceP$ forces $\varphi$, and make deliberate use of restricting partial orders below conditions, that is, if $p \in \forceP $ is such that $p \Vdash \varphi$, we let $\forceP':= \forceP_{\le p}:=\{ q \in \forceP \, : \, q \le p\}$ and use $\forceP'$ instead of $\forceP$. This is supposed to reduce the notational load of some definitions and arguments.

\section{Independent Suslin trees in $L$, almost disjoint coding}
The coding method of our choice utilizes Suslin trees, which can be generically destroyed in an independent way of each other. 
\begin{definition}
 Let $\vec{T} = (T_{\alpha} \, : \, \alpha < \kappa)$ be a sequence of Suslin trees. We say that the sequence is an 
 independent family of Suslin trees if for every finite set of pairwise distinct indices $e= \{e_0, e_1,...,e_n\} \subset \kappa$ the product $T_{e_0} \times T_{e_1} \times \cdot \cdot \cdot \times T_{e_n}$ 
 is a Suslin tree again.
\end{definition}
Note that an independent sequence of Suslin trees $\vec{T} = (T_{\alpha} \, : \, \alpha < \kappa)$ has the property that if $A \subset \kappa$ and we form 
$ \prod_{i \in A} T_i $
with finite support, where each $T_i$ denotes the forcing we obtain if we force with the nodes of the tree  as conditions using the tree order as the partial order, then in the resulting generic extension $V[G]$, for every $\alpha \notin A$, $V[G] \models `` T_{\alpha}$ is a Suslin tree$"$. To see this, we assume the opposite, namely there is an $\alpha \notin A$ such that $V[G]$ thinks that $T_{\alpha}$ is not Suslin anymore. We note that $V[G]$ is a generic extension of $V$ obtained with a forcing with the ccc (see \cite{Ho}, Lemma 51 for a more general argument of this type). So there is an $\aleph_1$-sized antichain of $T_{\alpha}$ in $V[G]$. But then the finitely supported product of the form $(\prod_{i \in A}  T_i )\times T_{\alpha}$ can not have the ccc, which is a contradiction to the  assumed independence of $\vec{T}$.

One can easily force the existence of independent sequences of Suslin trees with products of Jech's or Tennenbaum's forcing, or with just products of ordinary Cohen forcing. On the other hand independent sequences of length $\omega_1$ already exist in $L$. 

\begin{theorem}\label{DefinitionIndependentSequence}
Assume that $\aleph_1= \aleph_1^L$ and that $ (M,\in)$ is a transitive, $\omega_1$-containing, uncountable model of $\ZFC^{-} + ``\aleph_1$ exists$"$.  Then there is an independent sequence $\vec{T} = ( T_{\alpha} \mid \alpha < \omega_1)$ of $L$-Suslin trees and the sequence $\vec{T}$ is uniformly $\Sigma_1 ( \{ \omega_1 \} )$-definable over $M$. To be more precise,
there is a $\Sigma_1$-formula $\phi$  with $\omega_1$ as the unique parameter, which does not depend on the model $M$, such that
the relation $\{ (t,\gamma,\eta) \mid \gamma,\eta < \omega_1 \land t \in T_{\eta}^{\gamma}  \}$, where $T_{\eta}^{\gamma}$ denotes the $\gamma$-th level of $T_{\eta} \in \vec{T}$, is definable over $M$ using $\phi$. In particular $\vec{T}$ can be defined correctly in any transitive, uncountable model $M$ of $\ZFC^-$ and ``$\aleph_1$ exists$"$ using $\phi$ and $\omega_1$ as a parameter of $\phi$.

\end{theorem}

\begin{proof}
We first argue in $L$ and later argue that our to be presented construction relativizes to any $M$ as in the proposition. We fix the canonical $\diamondsuit$-sequence $(a_{\alpha} \subset \alpha \mid \alpha< \omega_1)$ in $L$. 
Next we alter the usual construction of a Suslin tree from $\diamondsuit$ to construct an $\omega_1$-sequence of Suslin trees $\vec{T}= (T_{\alpha} \mid \alpha < \omega_1)$. 

We fix a $\Delta_1$-definable bijection of the set of pairs of countable ordinals with $\omega_1$. We use this bijection in the usual way to code sets of pairs of countable ordinals with sets of countable ordinals. Under this identification, trees will be coded as subsets of ordinals.

We assume that $\alpha< \omega_1$ is a limit ordinal, and split the set $a_{\alpha} \subset \alpha$ into two parts via considering the even and odd ordinals of $a_{\alpha}$. We assume first that if we recursively split again the even ordinals of $a_{\alpha}$ into three sets $A,B,C \subset \operatorname{Even} (a_{\alpha})$, then $A$ codes a well-order of order type $\beta < \alpha < \omega_1$, $B$ codes a well-order of order type $\gamma <\alpha$ and $C$ codes a finite subset of $\omega_1$. This indicates that we aim to define the $\beta$-th level of $T_{\gamma}$ as follows.

We assume first that $\beta$ is a limit ordinal and $T^i_{\gamma}$ and $T^i_{\eta}$ for $\eta \in C$ has been defined already for each $i < \beta$. Now we consider the odd ordinals of $a_{\alpha}$ and assume that $\operatorname{Odd} (a_{\alpha})$ codes a set $A \subset \bigcup_{i < \beta } T_{\gamma}^i \times \prod_{\eta \in C} T_{\eta}^i$ which is an antichain there.

We define the $\beta$-th level $T^{\beta}_{\gamma}$ of $T_{\gamma}$ such that it seals off the antichain $A$. To be more specific we choose $ T_{\gamma}^{\beta} \times \prod_{\eta \in C} T_{\eta}^{\beta}$ in such a way that $A$ remains a maximal antichain in all further extensions of $ T_{\gamma}^{\beta} \times \prod_{\eta \in C} T_{\eta}^{\beta}$.

Otherwise we just extend $T^{\beta}_{\gamma}$ via adding top nodes on countably many branches through $T^{\beta}_{\gamma}$.

If $\beta$ is not a limit ordinal we define $T^{\beta}_{\gamma}$ under the assumption that $T^{\beta-1}_{\gamma}$ is already defined and let $T^{\beta}_{\gamma}$ be just the result of putting $\omega$-many new nodes above each node of $T^{\beta-1}_{\gamma}$.

We let $T_{\gamma}: = \bigcup_{\beta < \omega_1} T^{\beta}_{\gamma}$ and claim that 
$(T_{\gamma} \mid \gamma < \omega_1)$ is an independent sequence of Suslin trees in $L$.

Indeed, if $A \in L$ is an antichain in some $\prod_{\gamma \in e} T_{\gamma}$, then there is a club in $L$ of ordinals $\alpha$ such that $A \cap \alpha= a_{\alpha}$ and $\operatorname{E ven}(a_{\alpha})$ codes $A,B,C$ where $A$ codes $\beta$, $B$ codes $\gamma$ and $C$ codes $e \setminus \{\gamma\}$ and $\operatorname{Odd} (a_{\alpha})$ codes a maximal antichain in $\bigcup_{i < \beta } T_{\gamma}^i \times \prod_{\eta \in C} T_{\eta}^i$. But then $\operatorname{Odd} (a_{\alpha})$ got sealed off and $\prod_{\gamma \in e} T_{\gamma}$ has no uncountable antichains, so is a Suslin tree in $L$.

The definability of $\vec{S}$ comes from the fact that the canonical $\diamondsuit$-sequence in $L$ is $\Sigma_1 (\{ \omega_1 \})$-definable. We can use $L_{\omega_1}$ (which is $\Sigma_1( \{\omega_1 \} )$ to correctly define $\diamondsuit$ over it and consequentially $\vec{S}$ becomes definable over $L_{\omega_1}$ as well. The above considerations can be simulated correctly already in any transitive, uncountable $M$ which models $\ZFC^{-}$, as it will compute $L_{\omega_1}$ correctly and the rest of the construction is performed inside the latter model. 
\end{proof}

We remark, that the above proof shows that countable levels of countable initial segments of $\vec{S}$ are already uniformly definable over countable transitive models $M$ of $\ZFP$ and $``\aleph_1$ exists and $\aleph_1= \aleph_1^L"$.
That is, if $M$ is as just stated and $\gamma, \eta<\aleph_1^M$ and $t \in T^{\gamma}$ such that $\phi(t,\gamma,\omega_1)$ is true in $L$, then $M \models \phi (t \upharpoonright \omega_1^M,\gamma,\omega_1)$ and vice versa. Indeed if $M$ is such a model, and $ M \cap \omega_1=\aleph_1^M= (\aleph_1^L)^M$, then
the canonical $\diamondsuit$-sequence can be computed correctly up to $\aleph_1^M$ inside $M$'s version of $L$. So the whole construction of $\vec{S}$ inside $L_{\omega_1}$, when repeated inside $L_{\aleph_1^M}$ will produce the claimed initial segment of $\vec{S}$ restricted to trees of height $\aleph_1^M$.

The trees from $\vec{S}$ will later be used to define two countable families of $\Sigma^1_3$-sets, which will be manipulated in such a way that they will witness $\bf{\Sigma}^1_3$-separation and the failure of $\Pi^1_3$-reduction. As we need two such families, we split $\vec{S}$ into two independent sequences
\[ \vec{S}^0 := \{ S_{\alpha} \mid \alpha \text{ is even}\} \]
and
\[\vec{S}^1:= \{ S_{\alpha} \mid \alpha \text{ is odd.} \} \]

We briefly introduce the almost disjoint coding forcing due to R. Jensen and R. Solovay. We will identify subsets of $\omega$ with their characteristic function and will use the word reals for elements of $2^{\omega}$ and subsets of $\omega$ respectively.
Let $D=\{d_{\alpha} \, : \, \alpha < \aleph_1 \}$ be a family of almost disjoint subsets of $\omega$, i.e. a family such that if $r, s \in D$ then 
$r \cap s$ is finite. Let $X\subset  \omega$  be a set of ordinals. Then there 
is a ccc forcing, the almost disjoint coding $\mathbb{A}_D(X)$ which adds 
a new real $x$ which codes $X$ relative to the family $D$ in the following way
$$\alpha \in X \text{ if and only if } x \cap d_{\alpha} \text{ is finite.}$$
\begin{definition}\label{definitionadcoding}
 The almost disjoint coding $\mathbb{A}_D(X)$ relative to an almost disjoint family $D$ consists of
 conditions $(r, R) \in [\omega]^{<\omega} \times D^{<\omega}$ and
 $(s,S) < (r,R)$ holds if and only if
 \begin{enumerate}
  \item $r \subset s$ and $R \subset S$.
  \item If $\alpha \in X$ and $d_{\alpha} \in R$ then $r \cap d_{\alpha} = s \cap d_{\alpha}$.
 \end{enumerate}
\end{definition}
We shall briefly discuss the $L$-definable, $\aleph_1^L$-sized almost disjoint family of reals $D$  we will use throughout this article. The family $D$ is the canonical almost disjoint family one obtains when recursively adding the $<_L$-least real $x_{\beta}$ not yet chosen and replace it with $d_{\beta} \subset \omega$ where this $d_{\beta}$  is the real which codes the initial segments of $x_{\beta}$ using some recursive bijections between $\omega$ and $\omega^{<\omega}$. The definition of $D$ is uniform over any uncountable, transitive $\ZFP$-models $M$ with, as we can correctly compute $L$ up to $\aleph_1^L$ inside $M$ and then apply the above definition inside $L$'s version of $M$. Even more is true, if $M$ is a countable, transitive model of $\ZFP+``$ $\aleph_1$ exists and $\aleph_1=\aleph_1^L"$, then $M$ will compute $D \upharpoonright \omega_1^M$ in a correct way. The reason is again, that $M$ can define an initial segment of $L$ correctly which suffices to calculate $D \upharpoonright \omega_1^M$.

Last we state a short lemma which will be helpful when showing that our coding forcings work the way they should.
\begin{lemma}\label{a.d.coding preserves Suslin trees}
 Let $T$ be a Suslin tree and let $\mathbb{A}_D(X)$ be the almost disjoint coding which codes
 a subset $X$ of $\omega_1$ into a real with the help of an almost disjoint family
 of reals $D$ of size $\aleph_1$. Then $$\mathbb{A}_{D}(X) \Vdash_{} T \text{ is Suslin }$$
 holds.
\end{lemma}
\begin{proof}
 This is clear as $\mathbb{A}_{D}(X)$ has the Knaster property, thus the product $\mathbb{A}_{D}(X) \times T$ is ccc and $T$ must be Suslin in $V^{\mathbb{A}_{D}(X)}$. 
\end{proof}

\section{Coding machinery}
We continue with the construction of the appropriate notions of forcing which we want to use in our proof. The goal is to first define a coding forcing $\operatorname{Code} (x)$ for reals $x$, which will force for $x$ that a certain $\Sigma^1_3$-formula $\Phi(x)$ becomes true in the resulting generic extension. The coding method is basically the same as in \cite{Ho2} and \cite{Ho3}.

In a first step we force over $L$ to destroy all members of $\vec{S}= \vec{S}^0 \cup \vec{S}^1$ via generically adding an $\omega_1$-branch, that is we  form \[\forceP^0:=\prod_{\alpha \in \omega_1} S_{\alpha}\] with finite support. Note that this is an $\aleph_1$-sized, ccc forcing over $L$.

In a second step
we add $\omega_1$-many $\omega_1$-Cohen subsets with a countably supported product, for technical reasons we want the second forcing to be defined as in $L$ and not as in $L^{\forceP_0}$. We let $$\forceP^1:= (\prod_{\alpha< \omega_1} \mathbb{C} (\omega_1))^L.$$
Note that this forcing is  $\sigma$-closed only over $L$ so we need to argue that the two step iteration $\forceP_0 \ast \forceP_1$ preserves $\aleph_1$. Forcing with the two step iteration $\forceP_0 \ast \forceP_1$ is the same as forcing with the product $\forceP_0 \times \forceP_1$ as the latter sits densely in the former. As $\forceP^0 \times \forceP^1$ is isomorphic to $\forceP^1 \times \forceP^0$ we can look at the latter and note that $\forceP^1$ is $\sigma$-closed over $L$, so in particular it does not add new $\omega_1$-branches to trees from $L$ \footnote{Indeed any name of an $\omega_1$-length branch $\dot{b}$ through some tree $T$ gives rise to an $\omega_1$ branch through $T$ in $L$ using the $\sigma$-closure of the forcing.}. Thus $\vec{T}$ remains an independent sequence of Suslin trees in $L^{\forceP_1}$ and $\forceP_0$ is a ccc forcing in $L^{\forceP_1}$. So $\aleph_1$ is preserved when forcing with $\forceP^0 \times \forceP^1$ and moreover and $\CH$ remains true.

 We use $W$ to denote this generic extension of $L$, we fix a filter $G^0 \times G^1$ which is generic for $\forceP^0 \times \forceP^1$  and let \[W=L[G^0][G^1]. \] 

 Let $x \in W$ be a real,  and let $m,k \in \omega$ and let $\eta <\omega_1$. We simply write $(x,m,k)$ for a real $w$ which codes the triple $(x,m,k)$ in a recursive way. The forcing $\operatorname{Code}(x,m,k,1,\eta)$ \footnote{The other coding forcing,  $\operatorname{Code}(x,m,k,0,\eta)$, which uses the trees from the $\vec{S}^0$-sequence instead is defined in the analogous way}  which codes the triple $(x,m,k)$ into $\vec{S^1}$ is defined as the almost disjoint coding forcing of a specific set $Y \subset \omega_1$, that is
 \[ \operatorname{Code}(x,m,k,1) := {\mathbb{A}}({Y}).\]  We will define the crucial set $Y \subset \omega_1$ now.

 To ease notation we let $g \subset \omega_1$ be $g_{\eta}$ for $\eta < \omega_1$, where $g_{\eta}$ is the $\eta$-th coordinate of the $\prod_{\alpha < \omega_1} \mathbb{C}(\omega_1)$-generic filter over $L^{\forceP^0}$. We let $\rho: ([\omega_1]^{\omega})^{L} \rightarrow \omega_1$ be some canonically definable, constructible bijection between
these two sets. We use $\rho$ and $g$ to define the set $h \subset \omega_1$, which eventually shall be the set of indices of $\omega$-blocks of $\vec{S}$, where we ``code up the characteristic function of the real $(x,m,k)"$, the latter slogan will be made precise in a moment. Let \[h:= \{\rho( g \cap \alpha) \,: \, \alpha < \omega_1 \}\] and let
\begin{align*}
A:= &\{ \omega \gamma +2n \mid \gamma \in h, n \notin (x,m,k) \} \cup \\& \{\omega \gamma + 2n+1 \mid \gamma \in h, n \in (x,m,k) \}.
\end{align*}

Let $X \subset \omega_1$ be chosen such that it canonically codes the following objects:
\begin{itemize}
\item The set $A \subset \omega_1$.
\item The set $\{ b_{\beta} \subset S^1_{\beta} \mid \beta \in A \}$ of generic $\omega_1$-branches we added with $\forceP_0$. 

\end{itemize}

Note that, when working in $L[X]$ and if $\gamma \in h$ then
 we can read off $(x,m,k)$, and thus  we say that $(x,m,k)$ is coded into $\vec{S}^1$ at the $\omega$-block starting at $\gamma$,  via looking at the $\omega$-block of $\vec{S^1}$-trees starting at $\gamma$ and determine which tree has an $\omega_1$-branch in $L[X]$.

\begin{align*}
   (\ast)_1({\gamma},(x,m,k),) = {} & \parbox[t]{0.75\textwidth}{$n \in (x,m,k)$ if and only if $S^1_{\omega \cdot \gamma +2n+1}$ has an $\omega_1$-branch, and $n \notin (x,m,k)$ if and only if $S^1_{\omega \cdot \gamma +2n}$ has an $\omega_1$-branch..}
\end{align*}

Indeed if $n \notin (x,m,k)$ then we added a cofinal branch through $S^1_{\omega \cdot \gamma+ 2n}$. If on the other hand $S^1_{\omega \cdot\gamma +2n}$ does not have an $\omega_1$-branch in $L[X]$ then we must have added an $\omega_1$-branch through $S^1_{\omega \cdot \gamma +2n+1}$ as we always add an $\omega_1$-branch through either $S^1_{\omega \cdot \gamma +2n+1}$ or $S^1_{\omega \cdot \gamma +2n}$ and adding branches through some $S^1_{\alpha}$'s  will not affect that some $S^1_{\beta}$ remain Suslin in $L[X]$, as $\vec{S}^1$ is independent.

We note that we can apply an argument resembling David's trick \footnote{see \cite{David} for the original argument, where the strings in Jensen's coding machinery are altered such that certain unwanted universes are destroyed. This destruction is emulated in our context as seen below.} in this situation. We rewrite the information of $X \subset \omega_1$ as a subset $Y \subset \omega_1$ using the following line of reasoning.
Keeping lemma \ref{DefinitionIndependentSequence} in mind, it is clear that any transitive, $\aleph_1$-sized model $N$ of $\ZFP$ which contains $X$ as an element will be able to first define $\vec{S}^1$ correctly  and also correctly decode out of $X$ all the information regarding $(x,m,k)$ being coded at each $\omega$-block of $\vec{S}^1$ starting at every $\gamma \in h$. 
Consequently, if we code the model $(N,\in)$ as a set $X_N \subset \omega_1$, then for any uncountable $\beta$ such that $L_{\beta}[X_N] \models \ZFP$:
\begin{align*}
    L_{\beta}[X_N] \models & \text{\ldq The model decoded out of }X_N \text{ satisfies $(\ast)_{1}(\gamma,(x,m,k))$} \\& \text{for every $\gamma \in h$\rdq.}
\end{align*}

For the latter assertion to be meaningful, we need that the model decoded out of $X_N$ has $h$ as an element but this is clear as $h$ can be read off from $A$ which in turn is coded into $X$ which belongs to $N$ as an element.
We proceed by noting that there will be an $\aleph_1$-sized ordinal $\beta$ as above and we can fix a club $C \subset \omega_1$ and a sequence $(M_{\alpha} \, : \, \alpha \in C)$ of countable elementary submodels  of $L_{\beta} [X_N]$ such that
\[\forall \alpha \in C (M_{\alpha} \prec L_{\beta}[X_N] \land M_{\alpha} \cap \omega_1 = \alpha)\]
Now let the set $Y\subset \omega_1$ code the pair $(C, X_N)$ such that the odd entries of $Y$ should code $X_N$ and if $E(Y)$ denotes  the set of even entries of $Y$ and $\{c_{\alpha} \, : \, \alpha < \omega_1\}$ is the enumeration of $C$ then
\begin{enumerate}
\item $E(Y) \cap \omega$ codes a well-ordering of type $c_0$.
\item $E(Y) \cap [\omega, c_0) = \emptyset$.
\item For all $\beta$, $E(Y) \cap [c_{\beta}, c_{\beta} + \omega)$ codes a well-ordering of type $c_{\beta+1}$.
\item For all $\beta$, $E(Y) \cap [c_{\beta}+\omega, c_{\beta+1})= \emptyset$.
\end{enumerate}
We claim that the following assertion is true, which can be seen as a local version of $(\ast)(\gamma, (x,m,k))$.

\begin{align*}
\sigma_1 (x,m,k) := {} & \parbox[t]{0.75\textwidth}{For any countable transitive model $M$ of ``$\ZFP$ and $\aleph_1$ exists'' such that $\omega_1^M=(\omega_1^L)^M$ and $Y \cap \omega_1^M \in M$, $M$ can construct its version of the universe $L[Y \cap \omega_1^M]$, and the latter will see that there is an $\aleph_1^M$-sized, transitive model $\bar{N} \in L[Y \cap \omega_1^M]$, $\bar{N} \models \ZFC^- \land \omega_1^M \in \bar{N}$ which models $(\ast) ({\gamma,(x,m,k)})$ for $\aleph_1^M$-many $\gamma$.}
\end{align*}
Indeed, if $M$ is as asserted by $\sigma(x,m,k)$, then in particular $Y \cap \omega_1^M \in M$  and $\omega_1^M \in C$ by property 3 of $Y$. Thus $\omega_1^M= \omega_1^{M_{\alpha}}$ for an $M_{\alpha} \prec L_{\beta}[X_N]$ and the set which gets decoded out of $Y \cap \omega_1^M$ is the same no matter when we decode inside $M_{\alpha}$
or $M$. By elementarity, if $M_{\alpha}$ defines its version of $L[Y \cap \omega_1^M]$ internally then this inner model will see that there is a transitive $\bar{N} \models \ZFC^-$, $\omega_1^M  \in \bar{N} $ which models $(\ast) (\gamma,(x,m,k))$ for $\aleph_1^M$-many $\gamma$. So by absoluteness of the decoding, $M$ will find this $\bar{N}$ in its version of $L[Y \cap \omega_1^M]$ as well and again by absoluteness $\bar{N}$ satisfies the desired properties also when working in $M$, thus $\sigma(x,m,k)$ is true.  

We have finally defined the desired set $Y$ and now we use 
\[\operatorname{Code} (x,m,k,1,\eta):= \mathbb{A} (Y) \]
 relative to our previously defined, almost disjoint family of reals $D \in  L $ (see the paragraph after Definition 2.5)  to code the set $Y$ into one real $r$. This forcing only depends on the subset of $\omega_1$ we code, thus $\mathbb{A}_D(Y)$ will be independent of the surrounding universe in which we define it, as long as it has the right $\omega_1$ and contains the set $Y$.

The effect of the coding forcing $\operatorname{Code} (x,m,k,1,\eta)$ is that it generically adds a real $r$ such that
this formula becomes true:

\begin{align*}
\Psi_1 (r, (x,m,k)) := {} & \parbox[t]{0.75\textwidth}{For any countable, transitive model $M$ of ``$\ZFC^-$ and $\aleph_1$ exists'', such that $\omega_1^M=(\omega_1^L)^M$ and $r \in M$, $M$ can construct its version of $L[r]$, denoted by $L[r]^M$, which in turn thinks that there is a transitive $\ZFC^-$-model $\bar{N}$ of size $\aleph_1^M$ such that $\bar{N}$ believes $(\ast)({\gamma,(x,m,k))}$ for an $\aleph_1^M$-sized set of ordinals $\gamma$.}
\end{align*}

Indeed, if $r$ and $M$ are as above, then $M$ and $L[r]^M$ will compute the almost disjoint family $D$ up to the real indexed with $\omega_1 \cap M$ correctly, as discussed below the definition 2.3. As a consequence, $L[r]^M$ will contain the set $Y \cap \omega_1^M$, where $Y \subset \omega_1$ is as in the statement of $\sigma(x,m,k)$. So, as argued above,  in $L[Y \cap \omega_1^M]$, there is an $\aleph_1^M$-sized, transitive $\bar{N}$ which models $(\ast)({\gamma},(x,m,k))$ for every $\gamma \in h \cap M$, as claimed.

Note that $ \Psi_1 (r, (x,m,k))$ is a $\Pi^1_2$-formula in the parameters $r$ and $(x,m,k)$, as the set $h \cap M \subset \omega_1^M$ is coded into $r$. We say in the above situation that the real $(x,m,k)$ \emph{ is written into $\vec{S}^1$}, or that $(x,m,k)$ \emph{is coded into} $\vec{S^1}$. To summarize our discussion, given an arbitrary real of the form $(x,m,k)$, then our forcing $\operatorname{Code} (x,m,k,1)$, when applied over $W$, will add a real $r$ which will turn the $\Pi^1_2$-formula $ \Psi_1 (r, (x,m,k))$ into a true statement in $W^{\operatorname{Code} (x,m,k,1)}$.

The coding forcing which codes a given real $(x,m,k)$ into the $\vec{S}^0$, denoted by $\operatorname{Code} {(x,m,k,0,\eta)}$ is defined in the same way and so are the according formulas $\sigma_0$ and $\Psi_0$.

 The projective and local statement $ \Psi_1 (r, (x,m,k))$, if true,  will determine how certain inner models of the surrounding universe will look like with respect to branches through $\vec{S}$.
That is to say, if we assume that $ \Psi_1 (r, (x,m,k))$ holds. Then $r$ also witnesses the truth of $ \Psi_1 (r, (x,m,k))$ for any transitive  model $M$ of  the theory ``$\ZFP+$ $\aleph_1$ exists and $\aleph_1= \aleph_1^L"$,  which contains $r$ (i.e. we can drop the assumption on the countability of $M$).
Indeed if we assume 
that there would be an uncountable, transitive $M$, $r \in M$, which witnesses that $ \Psi_1 (r, (x,m,k))$ is false. Then by L\"owenheim-Skolem, there would be a countable $N\prec M$, $r\in N$ which we can transitively collapse to obtain the transitive $\bar{N}$. But $\bar{N}$ would witness that $ Psi_1 (r, (x,m,k))$ is not true for every countable, transitive model, which is a contradiction.

Consequently, the real $r$ carries enough information that
the universe $L[r]$ will see that certain trees from $\vec{S}^1$ have branches in that
\begin{align*}
n \in (x,m,k) \Rightarrow L[r] \models  ``S^1_{\omega \gamma + 2n+1} \text{ has an $\omega_1$-branch}".
\end{align*}
and
\begin{align*}
n \notin (x,m,k) \Rightarrow L[r] \models ``S^1_{\omega \gamma + 2n} \text{ has an $\omega_1$-branch}".
\end{align*}
Indeed, the universe $L[r]$ will see that there is a transitive model $N$ of ``$\ZFP+$ $\aleph_1$ exists and $\aleph_1=\aleph_1^L"$ which believes $(\ast)$ for every $\gamma \in h \subset \omega_1$, the latter being coded into $r$. But by upwards $\Sigma_1$-absoluteness, and the fact that $N$ can compute $\vec{S}^1$ correctly, if $N$ thinks that some tree in $\vec{S^1}$ has a branch, then $L[r]$ must think so as well.

\section{Allowable forcings}

Next we define the set of forcings which we will use in our proof.
We aim to iterate the coding forcings we defined in the last section.  

\begin{definition}\label{def:allowable}
Let $W=L[G^0 \times G^1]$ be our ground model. Let $\alpha < \omega_1$ and let $F\in L$, $F: \alpha \rightarrow L$ be a bookkeeping function.
A finite support iteration $\forceP=(\forceP_{\beta}\,:\, {\beta< \alpha})$ is called allowable (relative to the bookkeeping function $F$) if the function $F$ determines $\forceP$ inductively as follows:
 \begin{itemize}
 \item[] We assume that $\beta \ge 0$ and $\forceP_{\beta}$ is defined.
 We let $G_{\beta}$ be a $\forceP_{\beta}$-generic filter over $W$ and assume that $F(\beta)=(\dot{x},\dot{m},\dot{k}, \dot{l}, \dot{\eta})$ is a tuple of $(\forceP^0 \times\forceP^1) \ast \forceP_{\beta}$-names. We assume that $\dot{x}^{(G^0 \times G^1)\ast G_{\beta}}=:x$ is a real, $\dot{m}^{(G^0 \times G^1)\ast G_{\beta}}=:m $ and $\dot{k}^{(G^0 \times G^1)\ast G_{\beta}}=:k$ are natural numbers, $\dot{l}^{(G^0 \times G^1)\ast G_{\beta}}=:l \in \{0,1\}$ and $\dot{\eta}^{(G^0 \times G^1)\ast G_{\beta}}=:\eta$ is an ordinal $< \omega_1$. 
 
 \paragraph{The Freshness Condition:} We say that the \emph{Freshness Condition} fails at stage $\beta$ if there is a prior stage $\gamma < \beta$ and a $\forceP_{\gamma}$-name of a tuple $(\dot{a}', \dot{m}', \dot{k}', \dot{l}' ,\dot{\eta}') $ such that $\dot{a}'^{(G^0 \times G^1)\ast G_{\gamma}}= a \in \omega^{\omega}$, $\dot{m}'^{(G^0 \times G^1)\ast G_{\gamma}}= m' \in \omega$, $\dot{k}'^{(G^0 \times G^1)\ast G_{\gamma}}=k' \in \omega$, $\dot{l}'^{(G^0 \times G^1)\ast G_{\gamma}}=l' \in \{0,1\}$, and $\dot{\eta}'^{(G^0 \times G^1)\ast G_{\gamma}} = \eta$, and the forcing at stage $\gamma$ was defined as $\forceP(\gamma)^{G_{\gamma}} = \operatorname{Code}(a, m', k', l', \eta)$. In other words, the condition fails if the coding area $\eta$ has already been used for coding at an earlier stage of the iteration.
 
 Then we split into two cases:
 \begin{itemize}
 \item If the Freshness Condition fails at stage $\beta$, then we force with the trivial forcing.
 \item If the Freshness Condition holds at stage $\beta$, then let $\forceP(\beta)^{G_{\beta}}:= \operatorname{Code} (x,m,k,l,\eta)$.
 \end{itemize}
 \end{itemize}
\end{definition}

If $\forceP \in W$ is a forcing such that there is an $\alpha < \omega_1$ and an $F \in L$, $F: \alpha \rightarrow L$ such that $\forceP$ is allowable with respect to $F$, then we often just drop the $F$ and simply say that $\forceP \in W$ is allowable. 

As allowable forcings form the base set of an inductively defined shrinking process, they are also denoted by 0-allowable with respect to $F$ to emphasize this fact.
Informally speaking, the bookkeeping $F$ hands us at every step a real of the form $(x,m,k)$, decides whether to use $\vec{S}^0$ or $\vec{S}^1$ and uses a $\mathbb{C}(\omega_1)$-set (this set we will call a \emph{coding area}) which gives rise to a subset of $h \subset \omega_1$ which corresponds to the places where we code up the relevant branches through $\vec{S}^l$ to compute $(x,m,k)$ using the coding mechanism described in the previous section. The definition ensures that the sets $h$ determined by the coding areas are an almost disjoint family of $\omega_1$, i.e. two distinct such sets have countable intersection. The definition of allowable demands that each such coding area is used at most once in an allowable forcing, which will help to ensure that we will not accidentally code an unwanted $(x,m,k)$ into $\vec{S}$. 

\begin{definition}
Let $\forceP= ((\forceP_{\alpha}, \forceP({\alpha})) \mid \alpha < \delta)$ be an allowable forcing. Let $G \subset \forceP$ be a generic filter over $W$. Then
\begin{align*}
C^G:= \{ \eta < \omega_1\mid \exists \beta < \delta \exists \dot{x},&\dot{m},\dot{k}, \dot{l}, \dot{\eta} \in W^{\forceP_{\beta}}  \\&( {\forceP}({\beta}))^{G_{\beta}} = \operatorname{Code} (\dot{x}^{G_{\beta}},\dot{m}^{G_{\beta}},\dot{k}^{G_{\beta}},\dot{l}^{G_{\beta}},\dot{\eta}^{G_{\beta}}=\eta )\}
\end{align*}
is the set of coding areas of $\forceP$ relative to $G$. 
We also let 
\[ C^{\forceP} := \{\eta < \omega_1 \mid \exists p \in \forceP ( p \Vdash \eta  \in C^{\dot{G}} \}. \]
\end{definition}
It is immediate from the definition that $C^G$ and also $C^{\forceP}$ are always countable sets for every allowable $\forceP$. Next we derive some properties of allowable forcings.
\begin{definition}\label{def:allowable_extension}
Let $\mathbb{P} = (\mathbb{P}_{\xi} : \xi < \delta_1)$ be an allowable ($0$-allowable) forcing over $W$ with bookkeeping function $F_{\mathbb{P}} : \delta_1 \to L$, and let $G_{\mathbb{P}}$ be a $\mathbb{P}$-generic filter over $W$. A finite support iteration $\mathbb{Q} = (\mathbb{Q}_{\beta} : \beta < \delta_2)$ in $W[G_{\mathbb{P}}]$ is called \emph{allowable over the extension $W[G_{\mathbb{P}}]$} if there exists a bookkeeping function $F_{\mathbb{Q}} : \delta_2 \to L$ such that at every stage $\beta < \delta_2$, if we let $G_{\mathbb{Q}_{\beta}}$ denote the $\forceQ_{\beta}$-generic filter over $W[G_{\forceP}]$, then the iterand $\mathbb{Q}(\beta)^{G_{\mathbb{Q}_\beta}}$ is determined exactly as in Definition \ref{def:allowable} using $F_{\mathbb{Q}}(\beta)$, but subject to the \emph{Global Freshness Condition}: 
\begin{quote}
We say the \emph{Global Freshness Condition} fails at stage $\beta$ if the proposed coding area $\eta$ has already been used at a prior stage in $\mathbb{Q}$ (i.e., some $\alpha < \beta$) or it was already used at any stage in the forcing $\mathbb{P}$ (i.e., some $\xi < \delta_1$).
\end{quote}
If the Global Freshness Condition fails, $\mathbb{Q}(\beta)^{G_{\mathbb{Q}_\beta}}$ is defined to be the trivial forcing. Otherwise, we force with the corresponding coding forcing determined by $F_{\forceQ} (\beta)$ and the generic filters $G_{\forceP}$ and $G_{\forceQ_{\beta}}$.
\end{definition}

\begin{lemma}\label{iteration_of_allowable}
Let $\mathbb{P}$ be an allowable forcing over $W$ of length $\delta_1$, and suppose $\mathbb{P} \Vdash ``\dot{\mathbb{Q}} \text{ is an allowable forcing over } W[\dot{G}_{\mathbb{P}}] \text{ of length } \delta_2"$. Then the two-step iteration $\mathbb{P} \ast \dot{\mathbb{Q}}$ is an allowable forcing over $W$ of length $\delta_1 + \delta_2$.
\end{lemma}

\begin{proof}
Fix a generic filter $G_{\forceP}$. Let $F_1 : \delta_1 \to L$ be the bookkeeping function for $\mathbb{P}$, and let $F_2 : \delta_2 \to L$ be the bookkeeping function witnessing that $\dot{\mathbb{Q}}$ is allowable over $W[G_{\mathbb{P}}]$. 

We define the combined bookkeeping function $F : \delta_1 + \delta_2 \to L$ by:
\begin{align*}
    F(\xi) &= 
    \begin{cases} 
        F_1(\xi) & \text{if } \xi < \delta_1 \\
        F_2(\beta) & \text{if } \xi = \delta_1 + \beta \text{ for some } \beta < \delta_2 
    \end{cases}
\end{align*}

Let $\mathbb{R} = (\mathbb{R}_{\xi} : \xi < \delta_1 + \delta_2)$ be the finite support iteration generated by $F$ over $W$. We show $\mathbb{R} \cong \mathbb{P} \ast \dot{\mathbb{Q}}$ by induction on $\xi$.

For $\xi < \delta_1$, $F \restriction \delta_1 = F_1$, yielding $\mathbb{R}_{\xi} = \mathbb{P}_{\xi}$.

For $\xi = \delta_1 + \beta$, the iterand $\mathbb{R}(\xi)$ is determined by $F(\delta_1 + \beta) = F_2(\beta)$, the generic filters $G_{\forceP}$, $G_{\forceQ_{\beta}}$ and the Freshness Condition evaluated over the interval $[0, \delta_1 + \beta)$. This interval naturally partitions into $[0, \delta_1)$ (the stages of $\mathbb{P}$) and $[\delta_1, \delta_1 + \beta)$.

Working in $W[G_{\forceP}] [G_{\forceQ_{\beta}}]$, by Definition \ref{def:allowable_extension}, the Global Freshness Condition for $\dot{\mathbb{Q}}$ at stage $\beta$ holds if and only if the proposed coding area $\eta$ satisfies $\eta \notin C^{G_{\mathbb{P}}} \cup C^{G_{\mathbb{Q}_\beta}}$. This is equivalent to the standard Freshness Condition for $\mathbb{R}$ evaluated at stage $\delta_1 + \beta$. 

Consequently, the generic iterands coincide: $\mathbb{R}(\delta_1 + \beta)^{G_{\mathbb{R}}} = \dot{\mathbb{Q}}(\beta)^{G_{\mathbb{Q}}}$. We conclude $\mathbb{R} \cong \mathbb{P} \ast \dot{\mathbb{Q}}$, proving that the two-step iteration is allowable over $W$.
\end{proof}

 \begin{lemma} \label{FirstPropertiesOfAllowableForcings}

\begin{enumerate}
\item If $\forceP=(\forceP(\beta) \, : \, \beta < \delta) \in W$ is allowable then for every $\beta < \delta$, $\forceP_{\beta} \Vdash| \forceP(\beta)|= \aleph_1$, thus every factor of $\forceP$ is forced to have size $\aleph_1$.
\item Every allowable forcing over $W$ is ccc and thus preserves cardinals.
\item Every allowable forcing over $W$ preserves $\CH$. Furthermore, if $\forceP= (\forceP(\alpha) \, : \, \alpha < \omega_1) \in W$ is an $\omega_1$-length iteration such that each initial segment of the iteration is allowable over $W$, then $W^{\forceP} \models \CH$.
\item The product of two allowable forcings $\forceP$ and $\forceQ$ can be densely embedded into an allowable forcing provided that $C^{\forceP} \cap C^{\forceQ}=\emptyset$.
\end{enumerate}
\end{lemma}
\begin{proof}
The first, the second and the third assertion follow immediately from the definition modulo some well-known results.

The fourth item follows from the fact that almost disjoint coding forcings consist of finite conditions, thus their definition is absolute in every universe as long as the universe contains the subset of $\omega_1$ the almost disjoint coding forcing wants to code. As a consequence the forcing $\forceP \times \forceQ$ embeds densely into the finite support iteration of $\forceP \ast \check{\forceQ}$\footnote{
Let $i: \forceP \times \forceQ \to \forceP \ast \check{\forceQ}$ be the canonical embedding defined by concatenation. To show that $i$ has a dense image, fix an arbitrary condition $r \in \forceP \ast \check{\forceQ}$. Because the iteration $\forceP \ast \check{\forceQ}$ uses finite support, the tail $r \restriction [\delta_1, \delta_1+\beta)$ has only finitely many non-trivial coordinates. We can iteratively extend the head $r \restriction \alpha \in \forceP$ finitely many times to obtain a condition $p_1 \le r \restriction \alpha$ in $\forceP$ that completely decides the ground-model values of the tail at these coordinates. These decided values naturally define a condition $p_2 \in \forceQ$ (which has finite support since $r$ does), yielding a ground-model pair $(p_1, p_2)$ such that $i(p_1, p_2) \le r$.}. We claim that the iteration $\forceP \ast \check{\forceQ}$ is allowable. If $F_0: \delta_0 \rightarrow L$ is the bookkeeping function which witnesses that $\forceP$ is allowable, and $F_1: \delta_1 \rightarrow L$ witnesses that $\forceQ$ is allowable, then we define $F$ with domain $\delta_0 + \delta_1$ as follows. We set $F(\beta) := $ $F_0(\beta)$ if $\beta < \delta_0$ and $F(\delta_0 + \beta):= F_1 (\beta)$. The bookkeeping $F$ witnesses that $\forceP \ast \check{\forceQ}$ is allowable. This is trivial for $\beta< \delta_0$. And if we are at a stage $F(\delta_0 + \beta)$, then we will force with a coding forcing if and only if we use the same coding forcing when defining $\forceQ$ over $W$ with the help of $F_1$ at stage $\beta$, which uses the fact that  $C^{\forceP} \cap C^{\forceQ} = \emptyset$.

\end{proof}
Every allowable forcing is determined by the associated $F$ which is list of names of reals. We shall utilize this fact in showing that every allowable forcing can in fact be defined already in a proper inner model of $W$.

\begin{lemma}\label{DefinabilityInProperInnerModels}
    Let $\forceP \in W$ be an allowable forcing and let $F: \delta \rightarrow L$ be its bookkeeping. Then there is an uncountable, co-uncountable subset $I \subset \omega_1$ and a countable subset $J \subset \omega_1$ such that
    $\forceP$ can successfully be defined already in an inner model of $W$ of the form
    $L[(G^0 \upharpoonright I)  \times (G^1 \upharpoonright J)]$, where $G^0 \upharpoonright I$ is just the restriction of the generic $G^0$ to coordinates which are in I and $G^1 \upharpoonright J$ being defined similarly.
\end{lemma}
\begin{proof}
    The proof is via induction on the length $\delta$ of the iteration.
    Assume first that $\delta=1$ then we can assume that $\forceP = \operatorname{Code} (x)$ for some real $x \in W$. Recall that $W$ is defined as the generic extension of $L$ via $\forceP^0$ which generically adds branches to each tree in $\vec{S}$ and $\forceP^1= (\prod_{i \omega_1} \mathbb{C} (\omega_1))^L$. The reals in $W= L[G^0 \times G^1]$ are all elements of $L[G^0]$ already as is immediate from Easton's Lemma (see see Lemma 15.19 from \cite{Jech}). So in particular there is a countable $I \subset \omega_1$ such that $w \in L[G^0 \upharpoonright I]$ which shows the lemma for one step iterations.

    Now we assume that the lemma is true for allowable forcings of length $\delta$ and our goal is to show that it also must be true for allowable forcings of length $\delta+1$.
    We fix an allowable forcing $\forceP$ of length $\delta+1$ and write $\forceP= \forceP_{\delta} \ast \operatorname{Code} (\dot{w})$ for some $(\forceP^0 \times \forceP^1)\ast \forceP_{\delta}$-name of a real $\dot{w}$. By our induction hypothesis $\forceP_{\delta}$ is already definable in some inner model $L[G^0 \upharpoonright I] [G^1 \upharpoonright J]$. So the name $\dot{w}$ can be written as a $[(\forceP^0 \upharpoonright I) \times (\forceP^1 \upharpoonright J) \ast \forceP_{\delta}] \times [(\forceP^0 \upharpoonright (\omega_1 \setminus I)) \times \forceP^1 \upharpoonright (\omega_1 \setminus J)]$-name. Again by Easton's Lemma, this time applied over the ground model $L[G^1 \upharpoonright J]$, we obtain that the name $\dot{w}$ is in fact (equivalent to) a   $[(\forceP^0 \upharpoonright I) \times (\forceP^1 \upharpoonright J) \ast \forceP_{\delta}] \times (\forceP^0 \upharpoonright (\omega_1 \setminus I))$-name. Note that the forcing $\forceP^0 \upharpoonright (\omega_1 \setminus I)$ is an $\omega_1$-length iteration with finite support which adds a branch through every Suslin tree indexed in the set $\omega_1 \setminus I$ so by standard facts of forcing theory the name $\dot{w}$ is in fact (equivalent to) a $(\forceP^0 \upharpoonright I) \times (\forceP^1 \upharpoonright J) \ast \forceP_{\delta} \times (\forceP^0 \upharpoonright \tilde{I}))$-name, for a countable set $\tilde{I}$. We add $\tilde{I}$ to $I$ and finally note that the coding forcing $\operatorname{Code}(\dot{w})$, as it is ccc will potentially only use a countable set of coding areas $\tilde{J}$. So $\forceP= \forceP_{\delta} \ast \operatorname{Code} (\dot{w})$ can successfully be defined in the inner model
    $L[G^0 \upharpoonright I \cup \tilde{I}][G^1 \upharpoonright J \cup \tilde{J}]$ which shows the successor case.

    The limit case follows immediately from the above via taking countable unions. 
    \end{proof}

Let $\forceP= (\forceP(\beta) \, : \, \beta < \delta)$ be an allowable forcing with respect to some $F \in W$.
The set of  (names of) reals which are enumerated by $F$, and where the second case in the definition of allowable applies, i.e. we actually used a coding forcing, is dubbed \emph{the set of reals which are coded by $\forceP$}. That is, for every $\beta$, if we let $\dot{x}_{\beta}$ be the (name) of a real  listed by $F(\beta)$ and if we let $G \subset \forceP$ be a generic filter over $W$ and finally if we let
$ \dot{x}_{\beta}^G =:x_{\beta}$,  then we say that
$\{ x_{\beta} \, : \, \beta < \delta \}$ is the set of reals coded by $\forceP$ and $G$ (though we will suppress the $G$).

Next we show, that iterations of 0-allowable forcings will not add accidentally new elements to the set of reals defined by the $\Sigma^1_3$-formula \[\Phi_1((x,m,k)):= \exists r \Psi_1 (r,(x,m,k)), \]
where $\Psi_1$ is defined in the previous section and by the same argument will also not add non-intended members to the set defined by 
\[ \Phi_0 := \exists r \Psi_0( r,(x,m,k)).\] We let
\[ \Phi(x) \equiv \Phi_0 (x) \lor \Phi_1 (x). \]
To utilize already established jargon we will say that $``x$ is coded into $\vec{S^i}$'' whenever $\Phi_i (x)$ is true. Likewise, if $\lnot \Phi_i (x)$ holds true, then we say that ``$x$ is not coded into $\vec{S^i}$''.

\begin{lemma}\label{nounwantedcodes}
If $\forceP \in W$ is allowable, $\forceP=(\forceP_{\beta} \, : \, \beta < \delta)$, $G \subset \forceP$ is generic over $W$ and $\{ x_{\beta} \, : \, \beta < \delta\}$ is the set of reals which are coded by $\forceP$. Let $\Phi(v_0)$ be the distinguished formula from above. Then
in $W[G]$, the set of reals which satisfy $\Phi(v_0)$ is exactly 
$\{ x_{\beta} \, : \, \beta < \delta\}$.
\end{lemma}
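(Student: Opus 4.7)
The plan is to argue by contradiction. Suppose in $W[G]$ some real $w \notin \{x_\beta \, : \, \beta < \delta \}$ satisfies $\psi(w)$, and let $r \in W[G]$ be a witness. The first step is to upgrade the witness from ``countable $M$'' to $L[r]$ itself, using the L\"owenheim--Skolem absoluteness observation already recorded in the paper immediately after $(\ast \ast \ast)$. Thus $L[r]$ almost-disjointly decodes $r$ to $Y$, reshape-decodes $Y$ to $X$, and reads off a set $h \subset \omega_1$ together with actual $\omega_1$-branches through $S^i_{\omega\gamma + 2n}$ (for $n \notin w$) and $S^i_{\omega\gamma + 2n + 1}$ (for $n \in w$), for every $\gamma \in h$, as dictated by $(\ast)$.

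The heart of the argument will be a \emph{branch-location claim}: for every $\gamma \in h$ there must exist a coding stage $\beta < \delta$ of $\forceP$ with $i_\beta = i$ and $\gamma \in h_\beta$. To prove it, I would use the factorisation from the proof of Lemma 2.8 to write $\forceP$ over $L$ as a product $(\prod_\beta \mathbb{C}(\omega_1)^L) \times (\prod_\alpha S_\alpha)$ followed by a finite-support iteration of almost-disjoint codings, and then, by Lemma 2.8(3), separate the coordinates of $\forceP$ according to the target bit $i_\beta$. The factors with $i_\beta \neq i$ add no $\omega_1$-branches to any $S^i_\alpha$, since the only branches through $\vec{S}^i$ in $W[G]$ are the canonical generic branches $b_\alpha$ already in $W$. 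An $L[r]$-visible branch through $S^i_\alpha$ therefore requires $r$ to explicitly name $b_\alpha$ via its reshape/almost-disjoint code, which in turn forces $\alpha$ to lie in an $\omega$-block $\omega\gamma$ with $\gamma$ determined by some $g_\beta$ ($i_\beta = i$); otherwise, by the independence of $\vec{S}^i$ in $L$, $S^i_\alpha$ would still be perceived as Suslin in $L[r]$, contradicting the perceived branch.

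Once the branch-location claim is in hand, the finish is quick. The set $h$ is unbounded in $\omega_1$ by the reshape construction, and $\delta < \omega_1$, so one can pick $\beta^* < \delta$ with $i_{\beta^*} = i$ and $\gamma \in h \cap h_{\beta^*}$. At the $\omega$-block $\omega\gamma$, the selection ``branch through the $2n+1$-tree rather than the $2n$-tree'' interpreted via $r$ reads off $w$ (by the $\psi$-witnessing property of $r$), while interpreted via $r_{\beta^*}$ it reads off $x_{\beta^*}$ (by the definition of $\operatorname{Code}(x_{\beta^*}, i)$). Since both interpretations extract the \emph{same} canonical branches $b_\alpha$ added by $\prod_\alpha S_\alpha$, the two bit-patterns coincide, and so $w = x_{\beta^*}$, contradicting $w \notin \{x_\beta \, : \, \beta < \delta\}$.

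The main obstacle will be the branch-location claim: one has to rule out any ``accidental'' real $r \in W[G]$ that, via a clever combination of the $g_\beta$'s and the canonical product branches, would make some $S^i_\alpha$ with $\alpha$ outside every $h_\beta$ appear non-Suslin in $L[r]$. This is where the independence of $\vec{S}$ in $L$ and the mixed-support structure of allowable forcings must be deployed carefully, most likely through a name-analytic / mutual-genericity argument showing that the subsets of $\omega_1$ that $r$ can code are essentially assembled from the $g_\beta$'s of the coordinates targeting $\vec{S}^i$.
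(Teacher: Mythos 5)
Your overall architecture does coincide with the paper's: both arguments reduce the lemma to the claim that a tree $S^i_\alpha$ whose index is not used by any coding factor of $\forceP$ cannot acquire an $\omega_1$-branch visible in $L[r]$ for any real $r$ of $W[G]$, and both finish by matching the unwanted block pattern against the pattern laid down at some genuine coding stage. The problem is that you explicitly defer the proof of exactly this claim (your ``branch-location claim''), and the justification you sketch does not go through as stated. The sentence ``the factors with $i_\beta \neq i$ add no $\omega_1$-branches to any $S^i_\alpha$'' is vacuous in this setting: \emph{no} coding factor adds branches to any tree, because every member of $\vec{S}$ already has a generic branch in $W$ --- that is precisely what the preliminary forcing $\prod_{\xi<\omega_1}S_\xi$ does. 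The entire issue is which of these pre-existing branches become constructible from a single real of $W[G]$, and your ``an $L[r]$-visible branch therefore requires $r$ to explicitly name $b_\alpha$'' asserts the conclusion rather than proving it: a priori a real produced by the almost disjoint coding factors could compute $b_\alpha$ even though $b_\alpha$ was never deliberately placed into any of the sets $Y$. Your closing suggestion that this should be handled by a ``name-analytic / mutual-genericity argument'' is exactly the part of the proof that is missing.

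The missing idea --- and the paper's actual mechanism --- is a factor-out-and-distributivity argument, which is both simpler and more robust than a name analysis. Fix an unused index $\alpha$. Since $W=L[\prod_{\xi<\omega_1}S_\xi]$, one rewrites $W[\forceP]=W[\forceP'][S^i_\alpha]$, where $\forceP'$ consists of all factors of the construction except the single tree $S^i_\alpha$. By independence of $\vec{S}$ (together with the paper's observation that no factor of $\forceP'$ other than the trees themselves destroys Suslin trees), $S^i_\alpha$ is still Suslin in $W[\forceP']$, hence $\omega$-distributive there, hence $2^\omega\cap W[\forceP]=2^\omega\cap W[\forceP']$. So every candidate witness $r$ already lies in $W[\forceP']$, where $S^i_\alpha$ has no cofinal branch at all, and a branch in $L[r]\subseteq W[\forceP']$ would contradict this. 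With that in hand your finish works, but it needs one further point you omit: when you pick $\gamma\in h\cap h_{\beta^*}$ you must take $\gamma$ above the countable supremum of the pairwise intersections $h_{\beta_1}\cap h_{\beta_2}$ (the $h_\beta$ form an almost disjoint family and $\delta<\omega_1$), so that the $\omega$-block at $\omega\gamma$ is touched by the single stage $\beta^*$ only; otherwise two different coding stages could interleave on that block and the two bit patterns need not coincide.
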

\begin{proof}
Let $G$ be $\forceP$ generic over $W$, we work in $W[G]$. Let $g= (g_{\beta} \, : \, {\beta} < \delta)$ be the set of the $\delta$-many coding areas of $\forceP$ relative to $G$. We let $\rho : ([\omega_1]^{\omega})^L \rightarrow \omega_1$ be our fixed, constructible bijection and let $h_{\beta}= \{ \rho (g_{\beta} \cap \alpha) \, : \, \alpha < \delta\}$. Note that the family $\{h_{\beta} \,: \, \beta < \delta \}$ forms an almost disjoint family of subsets of $\omega_1$, that is two distinct members of  $\{h_{\beta} \,: \, \beta < \delta \}$ always have countable intersection. Thus there is an $\eta < \omega_1$ such that for arbitrary distinct $\beta_1$, $\beta_2 < \delta$,  $h_{\beta_1}\cap h_{\beta_2} \subset \eta$.

We assume for a contradiction, that there is a real $(x,m,k)$ which satisfies $\Psi_1 ((x,m,k))$ but $(x,m,k) \notin \{x_{\beta} \mid \beta < \delta \}$.
As a consequence, by the discussion right at the end of the third section, $r$ codes an unbounded set $h \subset \omega_1$ such that for every $\gamma \in h$ the following holds true.
\begin{align*}
n \in (x,m,k) \Rightarrow L[r] \models  ``S^1_{\omega \gamma + 2n+1} \text{ has an $\omega_1$-branch}".
\end{align*}
and
\begin{align*}
n \notin (x,m,k) \Rightarrow L[r] \models ``S^1_{\omega \gamma + 2n} \text{ has an $\omega_1$-branch}".
\end{align*}

As $(x,m,k)$ is distinct from every $x_{\beta}$, there must be $\aleph_1$-many $\alpha > \eta$ and an $n \in \omega$ such that, without loss of generality,
\[L[r] \models ``S^1_{\omega \alpha + 2n+1} \text{ has an $\omega_1$-branch}",\]
yet for every $r_{\beta}$ witnessing that $\Psi_1(r_{\beta}, x_{\beta})$ holds true
\[L[r_{\beta}] \models  ``S^1_{\omega \alpha + 2n+1} \text{ does not have an $\omega_1$-branch}".\]

Invoking the (proof of the) last lemma there is an uncountable, co-uncountable $I \subset \omega_1$ and 
a countable $J \subset \omega_1$ such that $\forceP \in L[G^0 \upharpoonright I][G^1 \upharpoonright J]$ and the set $I$ consists of all the indices of trees determined by the coding areas $G^1 \upharpoonright J$ with the addition of only countably many indices from $\vec{S}$. We denote this countable set of indices with $C$.
We collect the sets \begin{align*}
A_{\beta}:= &\{ \omega \gamma +2n \mid \gamma \in h_{\beta}, n \notin x_{\beta} \} \cup \\& \{\omega \gamma + 2n+1 \mid \gamma \in h_{\beta}, n \in x_{\beta} \}.
\end{align*}
As $G \subset \forceP$ picks at each stage $\beta < \delta$ of its iteration exactly one coding area $g_{\beta}$, the generic extension \[L[G^0 \upharpoonright I]  [G^1 \upharpoonright J] [ G]\] can be re-written as
\[ L [\{G^0_{\xi} \mid \xi \in \bigcup_{\beta < \delta}  A_{\beta} \} ][\{G^0_{\xi} \mid \xi \in C\}][(g_{\beta} \mid \beta  < \delta)] [\{ G^0_{\xi } \mid \xi \in I \land \xi \notin (C \cup \bigcup_{\beta <\delta} A_{\beta} ) \}].\]

In particular each tree from $\vec{S}$ with index not in  $C \cup \bigcup_{\beta <\delta} A_{\beta}$ is still Suslin in the inner model $L [\{G^0_{\xi} \mid \exists \beta < \delta (\xi \in A_{\beta} \} ][\{G^i_{\xi} \mid \xi \in C\}][(g_{\beta} \mid \beta  < \delta)]$ by the independence of the sequence.

It is therefore possible to fix an index $\alpha \notin C \cup \bigcup_{\beta <\delta} A_{\beta}$ such that there is a real $r$ with \[L[r] \models ``S^1_{\omega \alpha + 2n+1} \text{ has an $\omega_1$-branch}"\] whereas for every $\beta < \delta$
\[L[r_{\beta}] \models  ``S^1_{\omega \alpha + 2n+1} \text{ does not have an $\omega_1$-branch}".\]

We claim however  that there is no real in $W[G]$ such that $W[G] \models L[r] \models ``S^1_{\omega \alpha + 2n+1}$ has an $\omega_1$-branch$"$, which will be the desired contradiction.

We show this by pulling out the forcing $S^1_{\omega \alpha + 2n+1}$ out of the forcing $\forceP^0 \times \forceP^1 \ast \forceP$ over $L$ which produces $W[G]$. 
Indeed if we consider $W[\forceP]=L[\forceP^0] [\forceP^1][\forceP]$, and if $S^1_{\omega \alpha + 2n+1}$ is as described already, by the reasoning above
we can rearrange the generics to $W[G]= L  [G'^0 \times G^0_{\omega \alpha + 2n+1} ] [G^1] [G] = L[G'] [G^0_{\omega \alpha + 2n+1} ]$, where $G'^0$ is $\prod_{\beta \ne \alpha+2n+1}  G^0_{\beta}$ and $G'$ is $G'^0 \times G^1 \ast G$.

Note now that $S_{\alpha+2n+1}$ is still a Suslin tree in $L[G']$ so the forcing $S^1_{\alpha+2n+1}$ is $\omega$-distributive. This can be seen using the fact that $\vec{S}^0$ and $\vec{S}^1$ are independent. Indeed $S^1_{\alpha+2n+1}$ will remain Suslin in $L[G'^0][G^1]$, as we can write the universe as $L[G^1] [G'^0]$. But then the finite support iteration of the almost disjoint coding forcings is Knaster, hence keeps $S^1_{\alpha+2n+1}$ Suslin. Consequently $2^{\omega} \cap W[G] = 2^{\omega} \cap L[G']$. 

 But this implies that 
\[L[G'] \models \lnot \exists r L[r] \models `` S^1_{\alpha+2n+1} \text{ has an $\omega_1$-branch}" \]
as the existence of an $\omega_1$-branch through $S^1_{\alpha+2n+1}$ in the inner model $L[r]$ would imply the existence of such a branch in $L[G']$. Further, as no new reals appear when passing to $W[G]$ we also get 
\[W[G] \models \lnot \exists r L[r] \models `` S^1_{\alpha+2n+1} \text{ has an $\omega_1$-branch}". \]
This is the desired contradiction.

\end{proof}

\subsection{$\alpha$-allowable forcings}

The notion of 0-allowable will form the base case of an inductive definition.
Let $\alpha \ge 0$ be an ordinal and assume we defined already the notion of $\alpha$-allowable relative to a bookkeeping $F$ and a set $E$. For $0$-allowable we define the set $E$ to be empty. Then we can inductively define the notion of $\alpha+1$-allowable with respect to $E$ and $F$  as follows.

We list all  ${\Sigma}^1_3$-formulas $\varphi_n (v_0,v_1)$ with two free variables. If $y$ is a real then $\varphi_n(v_0,y)$ defines a $\Sigma^1_3(y)$-set $A_n(y_0)$, where $x \in A_n(y) \leftrightarrow \varphi_n(x,y)$.
Suppose that $\gamma < \omega_1$, $F$ is a bookkeeping function,  \[F: \gamma \rightarrow H(\omega_2) \] and \[\forceP=(\forceP_{\beta} \, : \, \beta < \gamma)\] is a allowable forcing relative to $F$.

Suppose that 
\[E= E_0 \cup E_1 \] 
where\[ E_0:= \{(\dot{y}_{\delta}, \dot{m}_{\delta} ,\dot{k}_{\delta}) \, : \,  \delta \le \alpha\} \]
and 
\[E_1 := \{(\dot{x}_{\delta},i_{\delta} ) \, : \,\delta \le \alpha, i_{\delta} \in \{0,1\} \} \]
where we allow $(\dot{y}_{\delta}, \dot{m}_{\delta} ,\dot{k}_{\delta})$ and $(\dot{x}_{\delta},i_{\delta} )$ to be the empty set. Otherwise  $\dot{m}_{\delta},\dot{k}_{\delta} $ are names for elements of $ \omega$ and $\dot{x}_{\delta}, \dot{y}_{\delta}$ are $\forceP$-names of two reals and for every two ordinals $\beta < \gamma \le  \alpha$,  if $(\dot{y}_{\beta}, \dot{m}_{\delta},\dot{k}_{\delta})$ and $(\dot{y}_{\gamma},\dot{m}_{\gamma},\dot{k}_{\gamma})$ are not the empty set, then $\forceP \Vdash (\dot{y}_{\beta}, \dot{m}_{\beta},\dot{k}_{\beta}) \ne (\dot{y}_{\gamma},\dot{m}_{\gamma},\dot{k}_{\gamma})$. Intuitively, $E_0$ will serve as the set of pairs of boldface $\Sigma^1_3$-sets, for which we already obtained rules which allow us to separate them; whereas $E_1$ is the set of (names of) reals which we decided to never code along our $\alpha$-allowable iteration using the coding forcing $\operatorname{Code}(x,a,b,i,\eta)$ for two fixed natural numbers $a$ and $b$ and any ordinal $\eta < \omega_1$. The latter plays a key role in establishing the eventual failure of $\Pi^1_3$-reduction.

Suppose that for every $\delta \le \alpha$, $(\forceP_{\beta} \, : \, \beta < \gamma )$ is $\delta$-allowable with respect to $E \upharpoonright \delta =  (E_0 \upharpoonright \delta) \cup (E_1 \upharpoonright \delta ) =\{ ( \dot{y}_{\eta},\dot{m}_{\eta},k_{\eta})  \, : \, \eta < \delta \} \cup \{ (\dot{x}_{\eta}, i_{\eta} ) \, : \, \eta < \delta\}$ and $F$. Our definition will consider two cases. In the first case we shall define $\alpha+1$-allowable with respect to $E_0 \cup \{\dot{y}_{\alpha+1}, \dot{m}_{\alpha+1},\dot{k}_{\alpha+1} ) \}$, that is we enlarge $E_0$ while keeping the old $E_1$ (via letting $(\dot{x}_{\alpha+1}, i_{\alpha+1})$ be the empty set) and say what $\alpha+1$-allowable should mean. In the second case we enlarge $E_1$, while keeping the old $E_0$ and say what $\alpha+1$-allowable should mean. We assume as a part of the definition of $\alpha$-allowable that we did not enlarge both $E_0$ and $E_1$ when passing from $\beta$-allowable to $\beta+1$-allowable for any $\beta < \alpha$. This property will be kept when defining $\alpha+1$-allowable which is what we do now.

\subsubsection{$\alpha+1$-allowable forcings}

Suppose $\mathbb{P} = (\mathbb{P}_{\beta} : \beta < \gamma)$ is an $\alpha$-allowable forcing with respect to $E = E_0 \cup E_1$ and a bookkeeping function $F$. Since $\mathbb{P}$ is allowable, it inherently satisfies the Freshness Condition at every stage (meaning $\mathbb{P}(\beta)^{G_{\beta}}$ is defined to be the trivial forcing whenever the Freshness Condition fails). 

We define $\mathbb{P}$ to be $\alpha+1$-allowable by specifying two mutually exclusive ways to extend $E$ to $E'$ and imposing additional rules on the iterands at stages where the Freshness Condition holds.

\paragraph{Case 1: Enlarging $E_0$}
Assume $(\dot{x}_{\alpha+1}, i_{\alpha+1})$ is empty, $\dot{y}_{\alpha+1}$ is a $\mathbb{P}$-name for a real, and $m_{\alpha+1}, k_{\alpha+1} \in \omega$ such that $\mathbb{P} \Vdash \forall \delta \le \alpha \, ((\dot{y}_{\delta}, m_{\delta}, k_{\delta}) \ne (\dot{y}_{\alpha+1}, m_{\alpha+1}, k_{\alpha+1}))$. We define $\mathbb{P}$ to be $\alpha+1$-allowable with respect to $F$ and $E' = E_0 \cup \{(\dot{y}_{\alpha+1}, m_{\alpha+1}, k_{\alpha+1})\} \cup E_1$ if, at every stage $\beta < \gamma$ where the Freshness Condition holds, the iterand $\mathbb{P}(\beta)^{G_{\beta}}$ obeys the following rules. Let $G_{\beta}$ be $\mathbb{P}_{\beta}$-generic over $W$, and evaluate $F(\beta)$ using $G_{\beta}$:

\begin{enumerate}
    \item \textbf{Separation towards the $A_m$-side:} If $F(\beta) = (\dot{x}, \dot{y}_{\alpha+1}, m_{\alpha+1}, k_{\alpha+1}, i, \dot{\eta})$ and $W[G_{\beta}]$ satisfies:
    $$ \exists \mathbb{Q} \, (\mathbb{Q} \text{ is } \alpha \text{-allowable w.r.t } E \text{ and some } F' \land \mathbb{Q} \Vdash x \in A_m(y_{\alpha+1})) $$
    Then the chosen forcing must be:
    $$ \mathbb{P}(\beta)^{G_{\beta}} := \operatorname{Code}(x, y_{\alpha+1}, m_{\alpha+1}, k_{\alpha+1}, 0, \eta) $$

    \item \textbf{Separation towards the $A_k$-side:} If Rule 1 fails, $F(\beta) = (\dot{x}, \dot{y}_{\alpha+1}, m_{\alpha+1}, k_{\alpha+1}, i, \dot{\eta})$, and $W[G_{\beta}]$ satisfies:
    $$ \exists \mathbb{Q}_2 \, (\mathbb{Q}_2 \text{ is } \alpha\text{-allowable w.r.t } E \text{ and some } F' \land \mathbb{Q}_2 \Vdash x \in A_k(y_{\alpha+1})) $$
    Then the chosen forcing must be:
    $$ \mathbb{P}(\beta)^{G_{\beta}} := \operatorname{Code}(x, y_{\alpha+1}, m_{\alpha+1}, k_{\alpha+1}, 1, \eta) $$

    \item \textbf{Separation following bookkeeping:} If neither Rule 1 nor Rule 2 applies for $F(\beta) = (\dot{x}, \dot{y}_{\alpha+1}, m_{\alpha+1}, k_{\alpha+1}, i, \dot{\eta})$, then:
    $$ \mathbb{P}(\beta)^{G_{\beta}} := \operatorname{Code}(x, y_{\alpha+1}, m_{\alpha+1}, k_{\alpha+1}, i^{G_{\beta}}, \eta) $$

    \item \textbf{Default Coding:} If $F(\beta) = (\dot{x}, \dot{y}, \dot{m}, \dot{k}, i, \dot{\eta})$ evaluates to a tuple such that:
    $$ W[G_{\beta}] \models ((y, m, k) \notin E^{G_{\beta}} \cup \{(y_{\alpha+1}, m_{\alpha+1}, k_{\alpha+1})\}) $$
    where $E^{G_{\beta}}$ is the set of evaluations of names in $E$ with the help of $G_{\beta}$. Then:
    $$ \mathbb{P}(\beta)^{G_{\beta}} := \operatorname{Code}(x, y, m, k, i^{G_{\beta}}, \eta) $$
\end{enumerate}

\paragraph{Case 2: Enlarging $E_1$}
Assume $(\dot{y}_{\alpha+1}, \dot{m}_{\alpha+1}, \dot{k}_{\alpha+1})$ is empty, $\dot{x}_{\alpha+1}$ is a $\mathbb{P}$-name of a real, and $i_{\alpha+1}$ is a $\mathbb{P}$-name of an element in $\{0,1\}$. 

We define $\mathbb{P}$ to be $\alpha+1$-allowable with respect to $F$ and $E' = E_0 \cup E_1 \cup \{(\dot{x}_{\alpha+1}, i_{\alpha+1})\}$ if it is $\alpha$-allowable relative to $E$ and $F$, subject to the following additional constraint for our iteration $\mathbb{P}=(\mathbb{P}_{\beta} \mid \beta < \gamma)$:
\begin{align*}
    \forall \beta < \gamma \Big( (\dot{x}_{\alpha+1}, i_{\alpha+1} &\text{ are } \mathbb{P}_{\beta}\text{-names}) \implies \\
    & \forall \eta < \omega_1 \big( \mathbb{P}(\beta)^{G_{\beta}} \neq \operatorname{Code}(\dot{x}_{\alpha+1}^{G_{\beta}}, a, b, i_{\alpha+1}^{G_{\beta}}, \eta) \big) \Big).
\end{align*}
(Here $a, b$ represent the G\"odel numbers of our two distinguished $\Sigma^1_3$ formulas).
To establish some jargon, the above defined constraint is often referred to as \emph{not using the tuple} $(x_{\alpha+1},a,b,i)$ \emph{for coding.} We also say in this situation that the coding forcing
$\operatorname{Code}(\dot{x}_{\alpha+1}^{G_{\beta}}, a, b, i_{\alpha+1}^{G_{\beta}}, \eta)$ is never used as a factor in an $\alpha+1$-allowable forcing.

We also allow enlarging $E_1$ by any infinite set of pairs (of names for reals and indicators in $\{0,1\}$), with the demand that we never use the associated coding forcings when defining $\alpha+1$-allowable iterations with respect to $E'$ and $F$.

\subsubsection{Limit stages}

For limit ordinals $\alpha$, we say that an allowable forcing $\forceP$ is $\alpha$ allowable with respect to $E$ and $F$ if for every $\eta < \alpha$,
$(\forceP_{\beta} \,: \, \beta < \gamma)$ is $\eta$-allowable with respect to $E \upharpoonright \eta$ and some $F'$.
\par \medskip

\subsubsection{Remarks}
We add a couple of remarks concerning the last definition.
\begin{itemize}

\item By definition, if $\delta_2 < \delta_1$ and $\forceP_1$ is $\delta_1$-allowable with respect to $E= \{(\dot{y}_{\beta}, \dot{m}_{\beta} ,\dot{k}_{\beta}) \, : \,  \beta \le \delta_1\}\cup \{(\dot{x}_{\beta}, i_{\beta} ) \, : \, \beta \le \delta_1$ and some $F_1$, then  $\forceP_1$ is also $\delta_2$-allowable with respect to $E \upharpoonright \delta_2 = \{( \dot{y}_{\beta}, \dot{m}_{\beta} ,\dot{k}_{\beta}) \, : \,  \beta \le \delta_2\} \cup \{ ( \dot{x}_{\beta}, i_{\beta} ) \, : \, \beta \le \delta_2, i_{\beta} \in \{0,1\} \}$ and an altered bookkeeping function $F'$. The bookkeeping $F'$ should just use the relevant stages where case 1.3 applies to guess what the $\delta_1$-allowable forcing $\forceP_1$ with respect to $E$ and $F$ does at these stages when in cases 1.1 or 1.2.
 
\item The notion of $\alpha$-allowable can be defined in a uniform way over any allowable extension $W'$ of $W$, analogous to the case $\alpha=0$. 

\item We will often just say that some iteration $\forceP$ is $\alpha$-allowable, by which we mean that there is a set $E$ and a bookkeeping $F$ such that $\forceP $ is $\alpha$-allowable with respect to $E$ and $F$.
\end{itemize}

The two step iteration of $\alpha$-allowable forcings will result in an $\alpha$-allowable forcing again, provided we define the right notion of what it means for a forcing to be $\alpha$-allowable over an $\alpha$-allowable generic extension of $W$. The next definition is by induction on $\alpha$. We assume that we do have defined already  the notion of $\beta$-allowable over $\beta$-allowable extensions of $W$ for every $\beta < \alpha$. Then we define $\alpha$-allowable over an $\alpha$-allowable generic extension of $W$ as follows.

\begin{definition}\label{def:alpha_allowable_extension}
Let $\mathbb{P}$ be an $\alpha$-allowable forcing over $W$ of length $\delta_1$ with respect to $E$ and $F_{\mathbb{P}}$, and let $G_{\mathbb{P}}$ be a $\mathbb{P}$-generic filter over $W$. A finite support iteration $\mathbb{Q} = (\mathbb{Q}_{\beta} : \beta < \delta_2)$ in $W[G_{\mathbb{P}}]$ is \emph{$\alpha$-allowable over the extension $W[G_{\mathbb{P}}]$} with respect to $E$ and a bookkeeping function $F_{\mathbb{Q}} : \delta_2 \to L$ if it satisfies the inductive definition of $\alpha$-allowability with the following changes:

\begin{enumerate}
    \item The ground model $W$ is replaced by $W[G_{\mathbb{P}}]$. Consequently, at any stage $\beta < \delta_2$, the model evaluations guiding the iteration (e.g., $W[G_{\mathbb{P}}][G_{\mathbb{Q}_\beta}] \models \dots$) are performed over the intermediate extension of $W[G_{\mathbb{P}}]$ by $G_{\mathbb{Q}_\beta}$.
    \item The base $0$-allowability requirements are subject to the Global Freshness Condition. For any coding area $\eta$ proposed at stage $\beta$ of $\mathbb{Q}$, we require $\eta \notin C^{G_{\mathbb{P}}} \cup C^{G_{\mathbb{Q}_\beta}}$. If this fails, $\mathbb{Q}(\beta)^{G_{\mathbb{Q}_\beta}}$ is defined as the trivial forcing.
    \item  For any $\alpha$, whenever the rules for $\alpha$-allowability require the existence of a $<\alpha$-allowable forcing $\mathbb{S}$ to dictate separation (e.g., $\mathbb{S} \Vdash x \in A_m(y)$), this witnessing forcing $\mathbb{S}$ must be $<\alpha$-allowable \emph{over} $W[G_{\mathbb{P}}][G_{\mathbb{Q}_\beta}]$.
\end{enumerate}
\end{definition}

The following lemma's proof is a straightforward application of the definition (see the almost identical proof for Lemma \ref{iteration_of_allowable}).
\begin{lemma}
   Fix an ordinal $\alpha$. Let $\forceP$ be an $\alpha$-allowable forcing of length $\delta_1$ and suppose $\forceP \Vdash ``\dot{\forceQ}$ is $\alpha$-allowable over $W[\dot{G}_{\forceP}]$ of length $\delta_2$''. Then the two step iteration $\forceP \ast \dot{\forceQ}$ is an allowable forcing over $W$ of length $\delta_1 + \delta_2$.
\end{lemma}

\section{Closure under products}

\begin{lemma}\label{productallowable}
Let $\alpha$ be an ordinal, assume that $W'$ is some $\alpha$-allowable generic extension of $W$, and that $\forceP^1=(\forceP^1_{\beta} \,: \,\beta < \delta_1)$ and $\forceP^2=(\forceP^2_{\beta} \,: \, \beta < \delta_2) $ are two $\alpha$-allowable forcings over $W'$ with respect to a common set $E=E_0 \cup E_1=\{(\dot{y}_{\delta},\dot{m}_{\delta}, \dot{k}_{\delta}) \,: \, \delta < \alpha\} \cup \{ (\dot{x}_{\delta},i_{\delta}) \, : \, \delta < \alpha \}$  and bookkeeping functions $F_1$ and $F_2$ respectively. Further assume that 
\[ C^{\forceP^1} \cap C^{\forceP^2} = \emptyset, \]
and that $E_1= \{ (\dot{x_{\delta}},i_{\delta}) \mid \delta < \alpha \}$ consists of reals which are in fact elements of $W'$.
Then there is a bookkeeping function $F$ such that $\forceP^1 \times \forceP^2$ densely embeds into an $\alpha$-allowable forcing over $W'$ with respect to $E$ and $F$.
\end{lemma}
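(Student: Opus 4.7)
The plan is to prove the lemma by induction on $\alpha$, with the base case $\alpha=0$ being Lemma 3.3(3). For the successor step from $\alpha$ to $\alpha+1$, given $\forceP^1, \forceP^2$ as in the statement, I would construct the bookkeeping $F$ for the product by concatenation: first run $F_1$ along an initial segment equal to $\forceP^1$, then run $F_2$ along a subsequent segment equal to $\forceP^2$. Because allowable forcings can be rearranged into mixed-support products (the trick used throughout the proof of Lemma 3.3), the two-stage iteration $\forceP^1 \ast \forceP^2$ over $W'$ is forcing-equivalent to the product $\forceP^1 \times \forceP^2$, and each factor of the combined object equals a factor of either $\forceP^1$ or $\forceP^2$. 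The compatibility condition on the entries of $E$ is a condition on $E$ alone and is therefore automatic. The limit case is immediate: at a limit ordinal the definition reduces $\alpha$-allowability of the product to $\eta$-allowability for every $\eta<\alpha$ with freedom to choose a new bookkeeping $F'$ at each $\eta$, so the inductive hypothesis closes it directly.

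The real content is the stagewise verification of the $\alpha+1$-allowable rules. Factors on the $\forceP^1$-segment of the combined iteration need no check, because the partial generic there coincides with the corresponding partial generic of $\forceP^1$. For a stage $\beta$ on the $\forceP^2$-segment, the partial generic is $G^1 \ast G^2_{\beta_2}$, whereas the original $\forceP^2$ made its choice of $\forceP^2(\beta_2)$ based on which of the alternatives (a)/(b)/(c) of the definition holds in $W'[G^2_{\beta_2}]$. We need to show that the same alternative holds in the larger intermediate model $W'[G^1 \ast G^2_{\beta_2}]$. The easy direction is upward: if an $\alpha$-allowable $\forceQ$ witnesses (a) in $W'[G^2_{\beta_2}]$, then $\forceQ$ remains $\alpha$-allowable in the larger model by the uniformity of the definition across allowable extensions (noted in the remarks), and $\forceQ \Vdash x \in A_m(\dot{y}_{\alpha+1})$ persists by upwards $\Sigma^1_3$-absoluteness.

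The hard part, and the main obstacle, is the converse. Given an $\alpha$-allowable $\forceQ' \in W'[G^1 \ast G^2_{\beta_2}]$ that forces $x \in A_m(\dot{y}_{\alpha+1})$, one uses the product structure $W'[G^1 \ast G^2_{\beta_2}] = W'[G^2_{\beta_2}][G^1]$ to pick a $\forceP^1$-name $\dot{\forceQ}'$ for $\forceQ'$ over $W'[G^2_{\beta_2}]$ and to consider the composed forcing $\forceP^1 \ast \dot{\forceQ}'$ there. Rearranging this composition as a product in the style of Lemma 3.3(3) and invoking the inductive hypothesis to two $\alpha$-allowable forcings (both $\forceP^1$ over $W'[G^2_{\beta_2}]$ and $\forceQ'$ itself) shows that $\forceP^1 \ast \dot{\forceQ}'$ is $\alpha$-allowable over $W'[G^2_{\beta_2}]$; its generic extension is $W'[G^1 \ast G^2_{\beta_2}][H]$ for $H$ being $\forceQ'$-generic, in which $x \in A_m(\dot{y}_{\alpha+1})$ holds. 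So (a) is witnessed already in $W'[G^2_{\beta_2}]$. Case (b) is handled by the identical argument (swapping $A_m$ with $A_k$ and combining with the failure of (a)), and (c) is residual. This stagewise verification, together with the concatenated bookkeeping, establishes that $\forceP^1 \times \forceP^2$ is $\alpha+1$-allowable with respect to $E$ and $F$, completing the induction.
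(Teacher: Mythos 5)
Your proposal is correct and follows essentially the same route as the paper: concatenate the bookkeeping functions, reduce everything to a stagewise claim that the same case of the definition applies at each stage of $\forceP^2$ whether one works over $W'$ or over $W'[\forceP^1]$, and prove that claim by induction on $\alpha$ (base case Lemma 3.3(3)), transferring witnesses upward by $\Sigma^1_3$-absoluteness and downward by composing the witness $\forceQ$ with $\forceP^1$ and invoking closure of $\alpha$-allowability under products. The paper's proof is exactly this argument, including the observation that the $E_1$-constraints are trivially preserved under products.
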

\begin{proof}
We shall show that $\forceP^1 \ast \check{\forceP}^2$ is $\alpha$-allowable with respect to $E$ and some $F$. This iteration should then serve as the asserted $\alpha$-allowable forcing in which $\forceP^1 \times \forceP^2$ densely embeds. 
We define $F \upharpoonright \delta_1$ to be $F_1$. For values $\delta_1+ \beta  \ge \delta_1$ we let $F(\delta_1+\beta)$ be $F_2(\beta)$ where we identify $\forceP^2_{\beta}$-names with the $\forceP^1$-check names for $\forceP^2_{\beta}$-names. The lemma is proved by induction on $\alpha$. If $\alpha=0$, then this is Lemma \ref{FirstPropertiesOfAllowableForcings}. 

So we assume that $\alpha=1$. We assume first that, we have enlarged $E_1$ in order to get the notion of 1-allowable. Then there is a real $x_0 \in W'$ and $i \in \{0,1\}$ such that 1-allowable with respect to $E$ and some $F$ is just an allowable forcing with the additional constraint, that we must not use forcings of the form $\operatorname{Code} (x_0,y,a,b,i, \eta)$ for every $\eta$. This means that
    \begin{align*}
        \forall \eta < \delta_1 ( \forall \zeta < \omega_1 (( \mathbb{P}^1(\eta)^{G^1_{\eta}} \neq \operatorname{Code}(x_0, a, b, i, \zeta) )).
    \end{align*}
    and
    \begin{align*}
        \forall \eta < \delta_2 ( \forall \zeta < \omega_1 (( \mathbb{P}^2(\eta)^{G^2_{\eta}} \neq \operatorname{Code}(x_0, a, b, i, \zeta) )).
    \end{align*}
    This property is clearly preserved under $\forceP^1 \times \forceP^2$.
    So if both $\forceP^1$ and $\forceP^2_{\beta}$ are 1-allowable with respect to $E $ and $F_1$ and $F_2$ respectively, then, at stage $\beta$ $\forceP^1 \times \forceP^2$ is 1-allowable with respect to $F$ and $E$ as asserted.

So we can assume that 1-allowable was obtained via enlarging $E=E_0= \{(\dot{y},\dot{m},\dot{k})\}$. 
We assume first that $W= W'$. It will become clear once the proof is finished for this case that extending the proof to a $W'$ is almost trivial.
Let $(\forceP_2)_{\beta}$ be the iteration of $\forceP_2$ up to stage $\beta < \delta_2$. Assume, that $\forceP_1 \times (\forceP_2)_{\beta}$ is in fact an $\alpha$-allowable forcing relative to $E$ and $F$. Let $(G^1 \times G^2_{\beta})$ be a $\forceP^1 \times \forceP^2_{\beta}$-generic filter over $W$.  Then we have that $F(\delta_1+\beta) =F_2(\beta)  =(\dot{x},\dot{y},\dot{m},\dot{k} ,i,\dot{\eta})$, and we claim that

\begin{claim}
At any stage $\beta < \delta_2$, if we evaluate the names in $F_2(\beta)$ using $G^2_\beta$, the specific case (Case 1.1, 1.2, 1.3, 1.4 from the definition of $\alpha$-allowability) that applies in the model $W[G^2_\beta]$ is exactly the same case that applies in the extended model $W[G^1][G^2_\beta]$.
\end{claim}

If the claim holds, the lemma follows immediately by induction on $\beta < \delta_2$. Working in $W[G^1][G^2_\beta]$, we evaluate $F(\delta_1 + \beta) = F_2(\beta)$. By the claim, the case determining $\mathbb{P}^2(\beta)^{G^2_\beta}$ over the larger model $W[G^1][G^2_\beta]$ is identical to the one over $W[G^2_\beta]$, ensuring the rules of $\alpha$-allowability are strictly obeyed.

\medskip

We assume first that at stage $\beta$, in the definition of $ \forceP^2$,  case 1.1 in the definition of 1-allowable applies, when working in the model $W[G^2_{\beta}]$ relative to $E$ and $F_2$. So we assume that $\beta$ is a stage such that
\[ F_2(\beta) = (\dot{x},\dot{y},\dot{m},\dot{k},i,\dot{\eta}) \]
and $(\dot{y},\dot{m},\dot{k}) \in E$ and if $\dot{x}^{G^2_{\beta}}=x$ and $\dot{y}^{G^2_{\beta}}=y$, the universe $W[ G^2_{\beta}]$ thinks that
\begin{align*}
\exists \forceQ (&\forceQ \text{ is } 0 \text{-allowable with respect to } E \text{ and some F' }\,  \land \\& \, \forceQ \Vdash {x} \in A_m({y})).
\end{align*}
Now when working over $W[G^1 \times G^2_{\beta}]$ instead, we shall show that
we are still in case 1.1.

The idea is that $ \forceQ$, as it will force $x \in A_m(y)$, will witness that we are in case 1 as well when working over $W[G^1][G^2_{\beta}]$. For that we need to show that $\forceP^1 \times \forceQ$ is 0-allowable, or in other words, that their coding areas do not intersect. As there is no reason why this should be the case, we need an additional argument. For that we show that the existence of a 0-allowable forcing which forces some $\Sigma^1_3$-statement is not tied to the coding areas it uses, as they are generically added sets. This will imply that we can shift the coding areas of $\forceQ$ away from $C^{G^1}$ and still obtain allowable forcings using these shifted coding areas instead while still forcing the $\Sigma^1_3$-assertion to hold.

We shall explain this shifting mechanism now. We write $W= L[G]$ and let $(G)_0$ and $(G)_1$ denote its projection to the first and the second coordinate respectively, so that $(G)_0$ is $\forceP^0= \prod_{i < \omega_1} \vec{S}$-generic and $(G)_1$ is $\forceP^1= \prod_{i < \omega_1} \mathbb{C}(\omega_1)^L$-generic. We write $(G)_1$ as $(C_i \mid i < \omega_1)$.  As $ G^2_{\beta}$ picks at each stage exactly one coding area $C_i$ there is $\eta < \omega_1$ and we can write $L[(G)_0][\{C_i \mid i < \omega_1 \}][G^2_{\beta}]$ as $L[(G)_0][\{C_i \mid i < \eta \}] [G^2_{\beta}] [\{C_i \mid \eta \le i < \omega_1 \}]$ and  the coding areas of $\forceP^2_{\beta}$ relative to $G^2_{\beta}$ is a subset of $\eta$.
\begin{align*}
L[(G)_0] [\{C_i \mid i < \eta \}] [G^2_{\beta}] [\{C_{\eta+ j} \mid j < \omega_1\} ] \models  \exists \forceQ (&\forceQ \text{ is } 0 \text{-allowable with} \\&
\text{respect to } E \text{ and some F' }  \\& \land \forceQ \Vdash {x} \in A_m({y})).
\end{align*}
Thus there is a condition $p \in \prod_{j<\omega_1} C_{\eta+j} \subset (\prod_{\eta \le j <\omega_1} \mathbb{C} (\omega_1))^L$
such that 
\begin{align*}
L[G] [\{C_i \mid i < \eta \}] [G^2_{\beta}] \models p  \Vdash_{(\prod_{j \in [\eta,\omega_1)} \mathbb{C} (\omega_1))^L}  \exists \forceQ (&\forceQ \text{ is } 0 \text{-allowable with} \\&
\text{respect to } E \text{ and some F' }  \\& \land \forceQ \Vdash {x} \in A_m({y})).
\end{align*}
As $(\prod_{j \in [\eta,\omega_1)} \mathbb{C} (\omega_1))^L$ is isomorphic to any 
$(\prod_{ i \in [\zeta,\omega_1)} \mathbb{C} (\omega_1))^L$, for $\zeta < \omega_1$, we can find a sufficiently large $\zeta< \omega _1$ such that the interval $[\zeta, \omega_1)$ is fully disjoint from the coding areas from $\forceP^1$ relative to $G^1$. We can also find a condition $p' \in (\prod_{j \in [\zeta,\omega_1)} \mathbb{C} (\omega_1))^L$ such that 
\begin{align*}
L[G] [\{C_i \mid i < \eta \}] [G^2_{\beta}] \models p'  \Vdash_{(\prod_{j \in [\zeta,\omega_1)} \mathbb{C} (\omega_1))^L}  \exists \forceQ (&\forceQ \text{ is } 0 \text{-allowable with} \\&
\text{respect to } E \text{ and some $F'$ }  \\& \land \forceQ \Vdash {x} \in A_m({y})).
\end{align*}
Note that $(\prod_{i< \omega_1} \mathbb{C} (\omega_1))^L$ is weakly homogeneous \footnote{The homogeneity of the forcing is in fact not necessarily needed for this argument and its use can be replaced by a density argument. We use the homogeneity as the argument becomes neater this way.}, so in fact
\begin{align*}
L[G] [\{C_i \mid i < \eta \}] [G^2_{\beta}] \models 1  \Vdash_{(\prod_{j \in [\zeta,\omega_1)} \mathbb{C} (\omega_1))^L}  \exists \forceQ (&\forceQ \text{ is } 0 \text{-allowable with} \\&
\text{respect to } E \text{ and some $F'$}  \\& \land \forceQ \Vdash {x} \in A_m({y})).
\end{align*}
So for any $\zeta < \omega_1$, $\zeta > \eta$ we can find a condition $q \in (\prod_{j \in [\zeta,\omega_1) } \mathbb{C} (\omega_1))^L$ such that $q \in \prod_{j \in [\zeta,\omega_1) } C_j$ and
\begin{align*}
L[G] [\{C_i \mid i < \eta \}] [G^2_{\beta}] \models q  \Vdash_{(\prod_{j \in [\zeta,\omega_1)} \mathbb{C} (\omega_1))^L}  \exists \forceQ (&\forceQ \text{ is } 0 \text{-allowable with} \\&
\text{respect to } E \text{ and some $F'$ }  \\& \land \forceQ \Vdash {x} \in A_m({y})).
\end{align*}
As the coding areas of $\forceP^1$ are countable, we can find a $\zeta> \eta$ such that $C^{\forceP^1} \cap [\zeta, \omega_1) = \emptyset$. Also the coding areas of $\forceQ$ are contained as a subset in the interval $[\zeta,\omega_1)$ so we can conclude
\begin{align*}
L[(G)_0] [(G)_1] [G^2_{\beta}] \models  \exists \forceQ (&\forceQ \text{ is } 0 \text{-allowable with} \\&
\text{respect to } E \text{ and some $F'$ }  \\&  \land C^{\forceQ} \cap C^{\forceP^1}=\emptyset \\& \land
\forceQ \Vdash {x} \in A_m({y})).
\end{align*}
and so
\begin{align*}
L[(G)_0] [(G)_1] [G^1] [G^2_{\beta}] \models  \exists \forceQ (&\forceQ \text{ is } 0 \text{-allowable with} \\&
\text{respect to } E \text{ and some $F'$ }  \\&  \land
\forceQ \Vdash {x} \in A_m({y})).
\end{align*}

So at stage $\beta$, we are in case 1 as well, when working over $W[G^1][G^2_{\beta}]$ which proves the Claim in the first instance.

If, at stage $\beta$ and working in the model $W[G_{\beta}^2]$ case 2 applies, when considering $ \forceP_2$ as a 1-allowable forcing with respect to $E$ and $F^2$ over $W$, then we argue first that case 1 is impossible when considering $\forceP_2$ as a $1$-allowable forcing over $W[G^1]$, where as $G^1$ is $\forceP^1$-generic as before.

Indeed, assume for a contradiction that case 1 must be applied at stage $\beta$, then, by definition,
\begin{align*}
W [G^1] [G^2_{\beta}] \models  \exists \forceQ (&\forceQ \text{ is } 0 \text{-allowable with} \\&
\text{respect to } E \text{ and some $F'$}  \\&  \land
\forceQ \Vdash {x} \in A_m({y})).
\end{align*}
But then $\forceP^1 \times \forceQ$ is such that there is a condition $p^1 \in \forceP^1 \cap G^1$ which forces that $p^1 \Vdash C^{\forceP^1} \cap C^{\forceQ} = \emptyset$. So restricting $\forceP^1 \times \forceQ$ to conditions stronger than $(p^1,1)$ will yield a $0$-allowable forcing $(\forceP^1 \times \forceQ)_{\le (p^1,1)} $ such that 
\begin{align*}
W [G^2_{\beta}] \models  (&(\forceP^1 \times \forceQ)_{\le (p^1,1)} \text{ is } 0 \text{-allowable with} \\&
\text{respect to } E \text{ and some $F'$ }  \\&  \land
(\forceP^1 \times \forceQ)_{\le (p^1,1)} \Vdash {x} \in A_m({y})).
\end{align*}
So we are in case 1 as well when working over $W [G^2_{\beta}]$, which is a contradiction to our assumption.

Now we need to argue that we are in case 2, when working over $W[G^1][G^2_{\beta}]$ in the definition of $\forceP^2$. We can argue in a very similar way than before. So suppose we are at stage $\beta$ of our iteration and we work over the model $W[G^1][G^2_{\beta}]$ and case 1 does not apply, which we know already. The universe $W[G^2_{\beta}]$ thinks that there is $\forceQ$ which is 0-allowable and which forces $x \in A_k(y)$. Just as before we would like to form $\forceP^1 \times \forceQ$ in the model $W[G^2_{\beta}]$ which will force $x \in A_k(y)$, therefore witnessing that we are in case 2, which is what we want to finish the proof. Again, as before, there is no guarantee that $\forceP^1$ and $\forceQ$ have non-intersecting coding areas, so we need to apply again the shifting argument from above.
As there we can argue that there is in fact an allowable $\forceQ$ such that $C^{\forceQ} \cap C^{\forceP^1}= \emptyset$ and such that $\forceQ \Vdash x \in A_k (y)$. Now this $\forceQ$ can be taken to form a product with $\forceP^1$, which witnesses that we are in case 2 when defining $\forceP^2$ at stage $\beta$ over the model $W[G^2_{\beta}]$ so this proves the claim for case 2 in the definition of allowable forcings.

Finally, if at stage $\beta$, case 3 applies when considering $\forceP_2$ as a 1-allowable forcing with respect to $E$ over $W[G^2_{\beta}]$, we claim that we must be in case 3 as well, when defining $\forceP_2 (\beta)$ over $W[G^1][G^2_{\beta}]$.

If not, then we would be in case 1 or 2 when defining $\forceP^2(
\beta)$ over $W[G^1][G^2_{\beta}]$. Assume without loss of generality that we were in case 1, then there would be a 0-allowable $\forceQ \in W[G^1][G^2_{\beta}]$ such that $\forceQ \Vdash x \in A_m(y)$. Again, via shifting the coding areas of $\forceQ$ we can assume that the forcing $\forceP^1 \ast\forceQ $ in $W[G^2_{\beta}]$ is allowable and witnesses that we are in fact in case 1 as well when defining  $\forceP^2(\beta)$ over $W[G^2_{\beta}]$ which is a contradiction to our assumption that we are in case 3. This shows that we must be in case 3 as well, and the claim, and hence the lemma is finally proved for $\alpha=1$.

\medskip

We shall argue now that the Claim is true for $\alpha+1$-allowable forcings provided we know that it is true for $\alpha$-allowable forcings. Again we can focus on the case when $\alpha+1$ allowable forcings are obtained via enlarging $E_0$, as enlarging $E_1$ just means to avoid certain coding forcings, which is trivial to be closed under products.
We shall show the claim via induction on $\beta$. The proof itself is almost identical to the case $\alpha=1$. As there, we can shift around coding areas using isomorphisms of $(\prod_{\omega_1} \mathbb{C} (\omega_1))^L$ to always find the relevant $\alpha$-allowable forcings whose coding areas are disjoint. All the work we have to do is replace $0$-allowable with $\alpha$-allowable.

This ends the proof of the claim and so we have shown the lemma.
\end{proof}
\section{Ideas for the proof}

This section will be used to briefly explain the set up and the structure of the proof of the main theorem.
There are two goals we aim to accomplish. First, we want to force $\bf{\Sigma}^1_3$-separation. For this, we will use an $\omega_1$-length iteration. We list all $\Sigma^1_3$-formulas in two free variables $(\varphi_n (v_0,v_1) \mid n \in \omega)$. We shall use a bookkeeping device which  enumerates simultaneously  in an $\omega_1$-length list all pairs of natural numbers $(m,k) \in \omega^2$ and (names of) reals $\dot{y}$. These objects $m,k, \dot{y}$ correspond to pairs of $\bf{\Sigma}^1_3$-sets $A_m(\dot{y}) $ and $A_k (\dot{y})$ (the (name of a) real $\dot{y}$ serves as a parameter in the $k$-th and $m$-th $\Sigma^1_3$-formula $\varphi_m$ and $\varphi_k$) we want to separate.

At the same time we want to create a universe over which there are two (lightface) $\Pi^1_3$-sets $B_0$ and $B_1$ which we will design in such a way, that no (boldface) pair of $\bf{\Pi}^1_3$-sets exists, which reduces $B_0$ and $B_1$.

We settle to work towards $\bf{\Sigma}^1_3$-separation on the odd stages of our iteration, whereas we work towards a failure of $\Pi^1_3$-reduction on the even stages of the iteration.
The iteration itself will consist of the coding forcings $\operatorname{Code} (x,y,m,k,i,\eta)$ applied over $L$ to make certain reals of the form $(x,y,m,k)$ to satisfy our two $\Sigma^1_3$-formulas $\Phi_0 (x,y,m,k)$ or $\Phi_1(x,y,m,k)$. The final goal is that for any fixed pair of natural numbers $m,k$,  and any parameter $y \in \omega^{\omega}$, there is a real parameter $R_{y,m,k}$ and a fixed $\Sigma^1_3$ formula $\sigma$ such that the sets
\[ D^0_{y,m,k} (R_{y,m,k} ) := \{ x \mid  \Phi_0 (x,y,m,k) \land \sigma (x,R_{y,m,k}) \} \]
and 
\[ D^1_{y,m,k} (R_{y,m,k} ) := \{ x \mid \Phi_1 (x,y,m,k) \land \sigma (x,R_{y,m,k}) \} \]
will become the separating sets for the pair of $\Sigma^1_3 (y)$-definable sets $A_m(y)$ and $A_k(y)$, i.e. $A_m  (y) \subset D^0_{y,m,k}, A_k(y) \subset D^1_{y,m,k}$ and $D^0_{y,m,k} (R_{y,m,k} ) \cup D^1_{y,m,k} (R_{y,m,k} ) = \omega^{\omega}$ and 
$D^0_{y,m,k} (R_{y,m,k} ) \cap D^1_{y,m,k} (R_{y,m,k} ) = \emptyset$.

At the same time we need to work towards a failure of ${\Pi}^1_3$-reduction which will be done on the even stages of the iteration. We aim to accomplish the failure of $\Pi^1_3$-reduction via exhibiting two $\Pi^1_3$-sets (lightface) $B_0$ and $B_1$ which are chosen in such a way that the question of whether some real $x$ is in $B_0$ or $B_1$ can be changed using the coding forcings $\operatorname{Code}$ without interfering with the coding forcings we have to use in order to work for the $\bf{\Sigma}^1_3$-separation. This freedom will be used to define our $\omega_1$-length iteration of coding forcings so that eventually $B_0$ and $B_1$ can not be reduced by any pair of (boldface) $\bf{\Pi}^1_3$-sets, thus yielding a slightly stronger failure than just a failure of $\bf{\Pi}^1_3$-reduction.

\section{The first step of the iteration}
We let
\[\vec{\varphi} :=  (\varphi_n (v_0,v_1) \mid n \in \omega ) \]
be a fixed recursive list of the $\Sigma^1_3$-formulas in two free variables. We allow that $v_0$ or $v_1$ actually do not appear in some of the $\varphi_n$'s, so our list also contains all $\Sigma^1_3$-formulas in one free variable. We use $\psi_n $ to denote $\lnot \varphi_n$, i.e. $\psi_n$ is the $n$-th $\Pi^1_3$-formula in the recursive list of $\Pi^1_3$-formulas induced by $\vec{\varphi}$.

We fix two $\Sigma^1_3$-formulas $\varphi_{a}, \varphi_{b}$ which provably have non-empty intersection, e.g. $\varphi_a (v_0) = \exists v_0 (v_0 = v_0)$ and $\varphi_b= \exists v_0 (v_0 =v_0 \land v_0 =1)$. As a consequence, we need not to work towards separating $A_a$ and $A_b$, thus coding forcings of the form
$\operatorname{Code}(\dot{x},a,b,i,\eta)$ for any (name of a) real $\dot{x}$, for $a,b$ our fixed natural numbers and for $i \in \{0,1\}$ can be used freely in our definition of the iteration to come.

Next we assume for notational simplicity that $\lnot \varphi_0$ and $\lnot \varphi_1$, i.e. the negation of the first and the negation of the second formula in our list look like this:
\begin{align*}
&\lnot \varphi_0 (v_0) = \psi_0(v_0)= \lnot \Phi_0 (v_0) = ``(v_0,a,b) \text{ is not coded into the $\vec{S}^0$-sequence} "
\\& \lnot \varphi_1 (v_0)= \psi_1(v_0)= \lnot \Phi_1 (v_0)= ``(v_0,a,b) \text{ is not coded into the $\vec{S}^1$-sequence} "
\end{align*}
Recall 
\footnote{For a definition of $\Phi_i$ look right above Lemma \ref{nounwantedcodes}} that both $\Phi_0$ and $\Phi_1$ are $\Sigma^1_3$, hence $\psi)_i$ is $\Pi^1_3$. The resulting $\Pi^1_3$-sets will be defined like this:
\begin{align*}
&B_0 = \{ x \in \omega^{\omega} \mid \psi_0(x) \}
\\&  B_1= \{ x \in \omega^{\omega} \mid  \psi_1(x) \}
\end{align*}

Note that for any given real $x \in \omega^{\omega}$, we can manipulate
the truth value of $\psi_0(x)$ and $\psi_1(x)$ via using the coding forcings 
$\operatorname{Code} (x,a,b,0,\eta)$ and $\operatorname{Code} (x,a,b,1,\eta)$ respectively from true to false (and once false it will remain false because of upwards absoluteness of $\Sigma^1_3$-formulas). This in particular will not interfere with the yet to be defined procedure of forming the $\alpha_{\beta}$-allowable forcings, we will need in order to force $\bf{\Sigma}^1_3$-separation. 
Thus we gain some amount of flexibility in how we can make the sets $B_0$ and $B_1$ behave.  We will use this to diagonalise against all possible $\bf{\Pi}^1_3$-sets $B_m,B_k$ in such a way that none of those can reduce $B_0$ and $B_1$. This ensures the failure of $\Pi^1_3$-reduction.

\section{Towards $\bf{\Sigma}^1_3$-separation}

We are finally in the position to define the iteration which will yield a universe of $\Sigma_3^1$ separation and a failure of $\Pi_3^1$-reduction. The iteration we are about to define inductively will be an allowable iteration, whose tails are $\alpha$-allowable and $\alpha$ increases along the iteration.

We start by rigorously defining the bookkeeping function $F$. Since our ground model $W = L[G^0 \times G^1]$ is obtained via a ccc forcing over $L$, it inherits a canonical well-ordering $<_W$. For every $\xi < \omega_1$, the iteration $\mathbb{P}_{\xi}$ is a ccc forcing of size $\aleph_1$. Working in $W$, we can use $<_W$ to well-order the set of all nice $\mathbb{P}_{\xi}$-names for tuples of the form $(\dot{x}, \dot{y}, \dot{m}, \dot{k})$, where $\dot{x}, \dot{y}$ are names for reals, and $\dot{m}, \dot{k}$ are names for integers. Let this well-ordered list be enumerated as $\langle \tau_{\xi, \zeta} \mid \zeta < \omega_1 \rangle$.

We fix a surjective pairing function $f: \omega_1 \to \omega_1 \times \omega_1$ such that if $f(\beta) = (\xi, \zeta)$, then $\xi \le \beta$ (and strictly $\xi < \beta$ for $\beta > 0$). At stage $\beta < \omega_1$ of our iteration, we define the bookkeeping function $F(\beta)$ using this pairing:

Let $f(\beta) = (\xi, \zeta)$. We define $F(\beta)$ to be $\tau_{\xi, \zeta} = (\dot{x}, \dot{y}, \dot{m}, \dot{k})$, which is the $\zeta$-th tuple of $\mathbb{P}_{\xi}$-names from our well-ordered list. Because $\xi \le \beta$, the forcing $\mathbb{P}_{\xi}$ is a regular suborder of $\mathbb{P}_{\beta}$. Consequently, every $\mathbb{P}_{\xi}$-name canonically acts as a $\mathbb{P}_{\beta}$-name. 

When we evaluate the iteration at stage $\beta$ using a generic filter $G_{\beta}$, evaluating $F(\beta)$ with $G_{\beta}$ is equivalent to working in the intermediate model $W[G_{\xi}]$ (where $G_{\xi} = G_{\beta} \cap \mathbb{P}_{\xi}$) and looking at the evaluated reals $(x, y) = (\dot{x}^{G_{\xi}}, \dot{y}^{G_{\xi}})$ and integers $(m, k) = (\dot{m}^{G_{\xi}}, \dot{k}^{G_{\xi}})$. This  ensures that every possible tuple of real parameters and integer indices appearing in any intermediate extension $W[G_{\xi}]$ is handed to the iteration cofinally often. We can also  assume that every possible tuple of reals appears in the bookkeeping $F$ cofinally often on both the even and the odd ordinals below $\omega_1$.

Assume that we are at stage $\beta < \omega_1$ of our iteration.  By induction we will have constructed already the following list of objects.
\begin{itemize}

\item An ordinal $\alpha_{\beta} \le \beta$ and a set $E_{\alpha_{\beta}}= E^0_{\alpha_{\beta}} \cup E^1_{\alpha_{\beta}}$ which is of the form $\{(\dot{y}_{\eta}, \dot{m}_{\eta},\dot{k}_{\eta} ) \, : \, \eta < \alpha_{\beta} \} \cup \{ (\dot{x}_{\eta}, i_{\eta} ) \, : \, \eta < \alpha_{\beta} \}$, where $\dot{y}_{\eta},\dot{x}_{\eta} $ are $\forceP_{\beta}$-names of a reals, $\dot{m}_\eta, \dot{k}_{\eta}$ are names of natural numbers and $i_{\eta}$ a name of a set in $ \{0,1\}$. As a consequence, for every bookkeeping function $F'$ and every generic filter $G_{\beta}$ we do have a notion of $\eta$-allowable relative to $E$ and $F'$ over $W[G_{\beta}]$.

\item We assume by induction that for every $\eta < \alpha_{\beta}$, if $\beta_{\eta}< \beta$ is the $\eta$-th stage in $\forceP_{\beta}$, where we add a new member to $E_{\alpha_{\beta}}$, then $W[G_{\beta_{\eta}}]$ thinks that the $\forceP_{\beta_{\eta} \beta}$ is $\eta$-allowable with respect to $E_{\alpha_{\beta}} \upharpoonright \eta$.

\item If $( \dot{y}_{\eta},\dot{m}_{\eta},k_{\eta}) \in E_{\alpha_{\beta}}$, then we set again $\beta_{\eta}$ to be the $\eta$-th stage in $\forceP_{\beta}$ such that this new member to $E_{\alpha_{\beta}}$ is added. In the model $W[G_{\beta_{\eta}}]$, we can form the set of reals $R_{\eta}$ which were added so far by the use of a coding forcing in the iteration up to stage $\beta_{\eta}$, and  which witness $\Psi_i (x,y,m,k)$ holds for some $(x,y,m,k)$. 
So in $W[G_{\beta}]$, 
\begin{align*}
    R_{\eta}:= \{ R \mid & R \text{ is the generic filter for a factor of $\forceP_{\beta}$ of the form} 
    \\& \text{$\operatorname{Code} (x,y,m,k)$} \}
\end{align*}

Note that $R_{\eta}$ is a countable set of reals and can therefore be identified with a real itself, which we will do. The real $R_{\eta}$ is an error term and indicates the set of coding areas we must avoid when expecting correct codes, at least for the codes which contain $\dot{y}_{\eta},\dot{m}_{\eta}$ and $k_{\eta}$.

\end{itemize}
Assume that $\beta$ is odd. Assume that $G_{\beta}$ is a $\forceP_{\beta}$-generic filter over $W$. Let $x,y,m,k$ denote the evaluations of the names given to us at stage $\beta$ by $F$ using $G_{\beta}$. We let $m, k \in \omega$ be the G"odel indices corresponding to the $\Sigma^1_3$-formulas $\varphi_m(v_0,v_1)$ and $\varphi_k(v_0,v_1)$.
We turn to the forcing $\forceP(\beta)^{G_{\beta}}$ we want to define at stage $\beta$ in our iteration.
Again we distinguish several cases.
\begin{itemize}
\item[(A)]  Assume that $W[G_{\beta}]$ thinks that there is an $\alpha_{\beta}$-allowable forcing $\forceQ$ relative to $E_{\alpha_{\beta}}$ and some $F'$ such that 
\begin{align*}
\forceQ \Vdash 
\exists z (z\in A_m(y) \cap A_k(y)).
\end{align*}
Then we pick the $<$-least such forcing, where $<$ is some previously fixed well-order. We denote this forcing with $\forceQ_1$
and use 
\[\forceP(\beta)^{G_{\beta}}:= \forceQ_1.\]
We do not change $R_{\beta}$ at such a stage.

\item[(B)] Assume that (A) is not true.

\begin{itemize} 

\item[(i)] Assume however that there is an $\alpha_{\beta}$-allowable forcing $\forceQ$ in $W[G_{\beta}]$ with respect to $E_{\alpha_{\beta}}$ such that 
\begin{align*}
\forceQ \Vdash 
 x \in A_m(y).
\end{align*}
Then we set 
\[\dot{\forceQ}_{\beta}^{G_{\beta}}= \forceP(\beta)^{G_{\beta}}:= \hbox{Code} ({x}, {y}, m,k, 0,\eta),\]
where $\eta$ is the least ordinal
for which the freshness condition holds at stage $\beta$. 
In that situation, we enlarge the $E$-set as follows. We let $({y}, {m}, {k})=: ( {y}_{\alpha_{\beta}}, {m}_{\alpha_{\beta}}, {k}_{\alpha_{\beta}})$ and \[ E^{G_{\beta +1}}_{\alpha_{\beta}+1}:= E^{G_{\beta}}_{\alpha_{\beta}} \cup \{ ({y}, {m}, {k} ) \} .\]
Further, if we let $r_{\zeta}$ be the real which uniquely defines the generic filter for   $\hbox{Code} ({x}_{\zeta}, {y}_{\zeta}, m_{\zeta},k_{\zeta}, i_{\zeta},\eta_{\zeta})$ at stage $\zeta$ of the iteration which witnesses $\Psi_i(x_{\zeta},y_{\zeta},m_{\zeta}, k_{\zeta})$ of some quadruple $(x_{\zeta},y_{\zeta},m_{\zeta},k_{\zeta}$). Then we collect all the countably many such reals $r_{\zeta}$ we have generically added so far in our iteration up to stage $\beta$ and code them into one real $R$ and let
\[ R_{\alpha_{\beta}+1 }:= R . \] 
\item[(ii)] Assume that (i) is wrong, but there is an $\alpha_{\beta}$-allowable forcing $\forceQ$ with respect to $E_{\alpha_{\beta}}$  in $W[G_{\beta}]$ such that 
\begin{align*}
\forceQ \Vdash 
 x \in A_k(y).
\end{align*}
Then we set
\[\forceP(\beta)^{G_{\beta}}:= \hbox{Code} ({x}, {y}, m,k, 1,\eta).\]
Where $\eta$ is the least ordinal for which the freshness condition at $\beta$ holds.
In that situation, we enlarge the $E$-set as follows. We let the new $E^{G_{\beta+1}}$ value $({y}_{\alpha_{\beta}}, m_{\alpha_{\beta}}, k_{\alpha_{\beta}})$ be $ ( {y}, m, k)$  and \[E^{G_{\beta+1}}_{\alpha_{\beta}+1}:= E^{G_{\beta}}_{\alpha_{\beta}} \cup \{ ( {y}, m, k) \}.\]
Further, if we let $r_{\zeta}$ be the real which is added by the coding forcing $\hbox{Code} ({x}_{\zeta}, {y}_{\zeta}, m_{\zeta},k_{\zeta}, i_{\zeta},\eta_{\zeta})$ at stage $\zeta$ of the iteration which witnesses $\Psi_i (x_{\zeta},y_{\zeta},m,k)$. Then we collect all the countably many such reals $r_{\zeta}$ we have added so far in our iteration up to stage $\beta$ and put them into one set $R$ and let
\[ R^{G_{\beta+1}}_{\alpha_{\beta}+1 }:= R  . \] 

\item[(iii)] If neither (i) nor (ii) is true, then there is no $\alpha_{\beta}$-allowable forcing $\forceQ$ with respect to $E_{\alpha_{\beta}}$ which forces $x \in A_m(y)$ or $x \in A_k(y)$, and we set
\[ \forceP(\beta)^{G_{\beta}}:=\hbox{Code} ({x}, {y}, m,k, i,\eta),\]
for $i \in 2$. The choice of $i \in 2$ is dictated by the following reasoning: If there was an earlier stage $\beta' < \beta$ where $(x,y,m,k)$ was considered by $F$ and case B(i) applied there, then $i$ must be $0$. If there was an earlier stage $\beta' < \beta$ where $(x,y,m,k)$ was considered by the bookkeeping function $F$ and case B(ii) applied there, then $i$ must be 1. If at all earlier stages $\beta' < \beta$, whenever $(x,y,m,k)$ was considered by $F$ then neither case B(i) nor case B(ii) applied, or if $\beta$ is the first such stage itself  and neither case B(i)  nor case B(ii) applies then we force with the default forcing
\[ \forceP(\beta)^{G_{\beta}}:=\hbox{Code} ({x}, {y}, m,k, 0,\eta).\]

Further, if we let $r_{\zeta}$ be the real which is added by the coding forcing $\hbox{Code} ({x}_{\zeta}, {y}_{\zeta}, m_{\zeta},k_{\zeta}, i_{\zeta},\eta_{\zeta})$ at stage $\zeta \le \beta$ of the iteration which witnesses $\Psi_i$ of some quadruple $(x_{\zeta},y_{\zeta},m_{\zeta},k_{\zeta}$). Then we collect all the countably many such reals we have added so far in our iteration up to stage $\beta$ and put them into one set $R$ and let
\[ R^{G_{\beta+1}}_{\alpha_{\beta}+1 }:= R . \] 
Otherwise we force with the trivial forcing.
\end{itemize}
 \end{itemize}

\section{Towards a failure of $\Pi^1_3$-reduction}

Assume that $\beta < \omega_1$ is an even stage of our iteration. Our induction hypothesis includes that we have created already the iteration $\forceP_{\beta}^{G_{\beta}}$ up to stage $\beta$, and that we defined the notion of $\alpha_{\beta}$-allowable forcings for an ordinal $\alpha_{\beta} < \omega_1$. Assume that  $F(\beta) = (\dot{m},\dot{k},\dot{y})$, where $\dot{m},\dot{k}$ are $\forceP_{\beta}$-name for integers and $\dot{y}$ is a $\forceP_{\beta}$-name of a real number. We consider the following cases:
\subsection{Case 1}

We first assume that, working over $W[G_{\beta}]$, there is a further $\alpha_{\beta}$-allowable forcing $\forceP \in W[G_{\beta}]$ such that $\forceP$ adds two reals $x_0$ and $x_1$ and such that for a $\forceP$-generic filter $G$ over $W[G_{\beta}]$, $W[G_{\beta}] [G]$ satisfies:
\begin{enumerate}

\item $x_0 \in B_m(y) \setminus B_k(y)$ and $(x_0,y,a,b)$ is neither coded into $\vec{S}^0$ nor $\vec{S}^1$.
\item $x_1 \in B_k(y) \setminus B_m(y)$ and $(x_1,y,a,b)$ is neither coded into $\vec{S}^0$ nor $\vec{S}^1$.
\end{enumerate}

In that situation we first use the $<_W$-least $\forceP_{\beta}$-name of such a forcing $\forceP$ to add the reals $x_0$ and $x_1$.
After forcing with $\forceP$ we code $x_0$ and $x_1$ both into $\vec{S}^1$  via forcing with $\operatorname{Code} (x_0,a,b,1,\eta_0) \times \operatorname{Code} (x_1,a,b,1,\eta_1)$ for two distinct ordinals $\eta_0$ and $\eta_1$ both satisfying the freshness condition.

The two-step forcing \[\forceP \ast (\operatorname{Code} (x_0,a,b,1,\eta_0) \times \operatorname{Code} (x_1,a,b,1,\eta_1))= : \forceP^{G_{\beta}}(\beta)\] is the forcing we use at stage $\beta$ in the situation of case 1. We also change the notion of $\alpha_{\beta}$-allowable to $\alpha_{\beta}+1$-allowable which is defined to be $\alpha_{\beta}$-allowable together with the additional demand to neither use the forcing $\operatorname{Code} (x_0,a,b,0,\eta)$ nor $\operatorname{Code} (x_1,a,b,0,\eta)$ for any $\eta < \omega_1$. In other words we let $E_{\alpha_{\beta}+1}:= E_{\alpha_{\beta}} \cup  \{ (x_0,0) ,(x_1,0) \}$. Note that this choice ensures that $x_0$ and $x_1$ will both be elements of $B_0$ in all outer $\alpha_{\beta} +1$-allowable extensions.

The choice of $\forceP(\beta) =\dot{\forceQ}_{\beta}$ and $\alpha_{\beta} +1$-allowability will ensure that $B_m(y), B_k(y)$ can not reduce $B_0$ and $B_1$ in all possible $\alpha_{\beta}+1$-generic extensions of $L[G_{\beta+1}]$ as we shall see now.

\begin{lemma}
The sets $B_m(y)$ and $B_k(y)$ can not reduce $B_0$ and $B_1$ in all outer models of $W[G_{\beta+1}]$ which are obtained by a further $\alpha_{\beta}+1$-allowable forcing.
\end{lemma}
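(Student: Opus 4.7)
The plan is to prove the lemma by contradiction: assume there is an $\alpha_\beta+1$-allowable generic extension $L[G_{\beta+1}][H]$ in which some ordering of the pair $\{B_m(y),B_k(y)\}$ is a reducing pair for $\{B_0,B_1\}$, and derive a contradiction from the properties of the two diagonalising reals $x_0,x_1$ added at stage $\beta$. The first step is to isolate four invariants that must persist in every $\alpha_\beta+1$-allowable outer model of $L[G_{\beta+1}]$:
\begin{itemize}
\item[(i)] $x_0,x_1\in B_0$, because $(x_i,a,b)$ is not coded into $\vec S^0$ for $i=0,1$;
\item[(ii)] $x_0,x_1\notin B_1$, because $(x_i,a,b)$ is coded into $\vec S^1$ for $i=0,1$;
\item[(iii)] $x_0\notin B_k(y)$, because $\varphi_k(x_0,y)$ holds;
\item[(iv)] $x_1\notin B_m(y)$, because $\varphi_m(x_1,y)$ holds.
\end{itemize}

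Invariants (ii)--(iv) are $\Sigma^1_3$ assertions: the coding property for $\vec S^1$ is the formula $\psi(\cdot,1)$ which was shown to be $\Sigma^1_3$, and $\varphi_m,\varphi_k$ are $\Sigma^1_3$ by definition. All three already hold in $L[G_{\beta+1}]$, namely (iii) and (iv) by the defining property of the reals supplied in Case 1, and (ii) by having forced with $\operatorname{Code}(x_0,a,b,1)\times\operatorname{Code}(x_1,a,b,1)$ at stage $\beta$. Hence they persist in $L[G_{\beta+1}][H]$ by upward $\Sigma^1_3$-absoluteness. For (i) I would invoke the no-unwanted-codes principle (Lemma \ref{nounwantedcodes}) together with the fact that $E_{\alpha_\beta+1}=E_{\alpha_\beta}\cup\{(x_0,0),(x_1,0)\}$ by definition forbids any $\alpha_\beta+1$-allowable iteration from using the factor $\operatorname{Code}(x_i,a,b,0)$ for $i\in\{0,1\}$. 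The pull-out argument in the proof of Lemma \ref{nounwantedcodes} only uses the independence of $\vec S^0,\vec S^1$ and the fact that outside of the explicit $\operatorname{Code}$-factors no piece of the iteration destroys Suslin trees, so it generalises verbatim to $\alpha_\beta+1$-allowable iterations, showing that $(x_i,a,b)$ cannot be accidentally coded into $\vec S^0$ in $L[G_{\beta+1}][H]$.

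With the four invariants in hand the contradiction is immediate. Assume first that $B_m(y),B_k(y)$ reduce $B_0,B_1$ in the order suggested by the notation, so $B_m(y)\cup B_k(y)=B_0\cup B_1$, $B_m(y)\subseteq B_0$, $B_k(y)\subseteq B_1$, and $B_m(y)\cap B_k(y)=\emptyset$. By (i), $x_1\in B_0\subseteq B_m(y)\cup B_k(y)$; by (iv), $x_1\notin B_m(y)$; hence $x_1\in B_k(y)\subseteq B_1$, contradicting (ii). The symmetric argument applied to $x_0$ and (iii) rules out the reversed pairing in which $B_k(y)$ reduces $B_0$ and $B_m(y)$ reduces $B_1$. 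Thus no ordering of $\{B_m(y),B_k(y)\}$ serves as a reducing pair for $\{B_0,B_1\}$ in any $\alpha_\beta+1$-allowable outer model of $L[G_{\beta+1}]$.

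The step that I expect to require genuine care is item (i), i.e.\ the generalisation of Lemma \ref{nounwantedcodes} to $\alpha_\beta+1$-allowable iterations in the presence of both odd-stage separation factors and the even-stage diagonalisation factors $\forceP\ast(\operatorname{Code}(x_0,a,b,1)\times\operatorname{Code}(x_1,a,b,1))$. The closure of the class of $\alpha$-allowable forcings under products (Lemma \ref{productallowable}), the independence of $\vec S^0,\vec S^1$, and the mixed finite/countable support structure of allowable iterations together should make the pull-out argument go through, but this is the only point where the fine combinatorics of the coding machinery is genuinely invoked; everything else reduces to a clean $\Sigma^1_3$-absoluteness diagonalisation.
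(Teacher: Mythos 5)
Your proof is correct and follows essentially the same route as the paper: both arguments rest on the persistence of exactly your invariants (i)--(iv) (the $E$-set restriction plus the pull-out argument of Lemma~\ref{nounwantedcodes} for (i), upward $\Sigma^1_3$-absoluteness for (ii)--(iv)), the paper merely organising the endgame as a four-subcase analysis according to whether $x_0\in B_m(y)$ and $x_1\in B_k(y)$ survive into $M$, where your single derivation ($x_1\in B_0\subseteq B_m(y)\cup B_k(y)$, $x_1\notin B_m(y)$, hence $x_1\in B_k(y)\subseteq B_1$, contradicting $x_1\notin B_1$, and symmetrically for $x_0$ under the reversed pairing) is a slightly cleaner packaging of the same facts. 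You are also right that the only substantive input is the persistence of $x_i\in B_0$, which the paper likewise takes as given just before the lemma by appeal to the choice of $E_{\alpha_\beta+1}$ and the no-unwanted-codes argument.
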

\begin{proof}
We shall consider three subcases to prove the lemma. We work in $M \supset L[G_{\beta+1}]$ where $M$ is an outer model obtained by a further $\alpha_{\beta}+1$-allowable forcing. 
\begin{itemize}

\item[Case 1a:]  First we assume that $x_0 \in B_m (y) \setminus B_k (y)$ and $x_1 \in B_k (y) \setminus B_m (y)$ still holds in $M$. In this situation neither $B_m (y) \subset B_1$ nor $B_k (y) \subset B_1$ can hold as witnessed by $x_0$ and $x_1$. In particular, $B_m (y)$ and $B_k (y)$ can not reduce $B_0$ and $B_1$. Note that this will remain true in all further outer models obtained  by an additional $\alpha_{\beta}+1$-allowable forcing as long as $x_0 \in B_m (y) \setminus B_k (y)$ and $x_1 \in B_k (y) \setminus B_m (y)$. If $x_i$ will drop out of $B_m (y)$ or $B_k (y)$ in some $\alpha_{\beta}+1$-allowable extension, then case 1b and case 1c will apply.
\item[Case 1b:] We assume that $x_0 \in B_m (y) \setminus B_k (y)$ but $x_1 \notin B_m (y) \cup B_k (y)$ holds in $M$. In this situation we can not have $B_0 \cup B_1 = B_m (y) \cup B_k (y)$ as $x_1 \in B_0$ and $x_1 \notin B_m (y) \cup B_k (y)$. Note that $x_1 \in B_0$ will remain true in outer $\alpha_{\beta} +1$-allowable models by our choice of $E_{\alpha_{\beta}+1}$ and $x_1 \notin B_m (y) \cup B_k (y)$ by the upwards absoluteness of the $\Sigma^1_3$-formulas $\lnot \psi_m$ and $\lnot \psi_k$. As a consequence, $B_m (y)$ and $B_k(y)$ can not reduce $B_0$ and $B_1$ in $M$.
\item[Case 1c:] In the dual case we assume that $x_1 \in B_m(y) \setminus B_k(y)$ but $x_0 \notin B_m(y) \cup B_k(y)$ holds in $M$ to derive, as above, that $B_0 \cup B_1 \ne B_m(y) \cup B_k(y)$.
\item[Case 1d:] If $x_0$ and $x_1$ are both not in $B_m(y)$ and $B_k(y)$, then again $B_m(y) \cup B_k(y) \ne B_0 \cup B_1$.

\end{itemize}

\end{proof}

\subsection{Case 2}
In the second case, we assume that case 1 does not apply. As a consequence, whenever we work over $W[G_{\beta}]$ and apply a further $\alpha_{\beta}$-allowable forcing $\forceP$ which adds two reals $x_0\ne x_1$ and does neither have $\operatorname{Code} (x_0,a,b,i,\eta)$ nor $\operatorname{Code} (x_1,a,b,i,\eta)$ for some $\eta< \omega_1$ and for $i \in \{ 0,1\}$ as a factor in the iteration, then $x_0$, $x_1$ will not satisfy that $x_0 \in B_m(y) \setminus B_k(y)$ and $x_1 \in B_k(y) \setminus B_m(y)$.

Again, we shall split into subcases:

\begin{enumerate}
\item[Case 2a:] There is an $\alpha_{\beta}$-allowable forcing $\mathbb{P}$ and a $G\subset \mathbb{P}$ such that in $W[G_{\beta}] [G]$ there are $x_0\ne x_1$ such that $x_0,x_1 \in B_m (y)\setminus B_k (y) $. In this situation, we force over $W[G_{\beta}] [G]$ with $\operatorname{Code} (x_0,a,b,1,\eta_0) \times \operatorname{Code} (x_1,a,b,0,\eta_1)$ for two ordinals $\eta_0,\eta_1$ for which the freshness condition holds. Let $H_0 \times H_1 \subset \operatorname{Code} (x_0,a,b,1,\eta_0) \times \operatorname{Code} (x_1,a,b,0,\eta_1)$ be a $W[G_{\beta}][G]$-generic filter, and define $G_{\beta+1}:= G_{\beta} \ast G \ast (H_0 \times H_1)$. 

We also define $\alpha_{\beta} +1$-allowable by enlarging the restricted set to $E_{\alpha_{\beta}+1}:= E_{\alpha_{\beta}} \cup \{ (x_0,0), (x_1,1) \}$. Formally, this introduces the additional rule that for any $\alpha_{\beta}+1$-allowable iteration $\mathbb{Q} = (\mathbb{Q}_{\gamma} \mid \gamma < \delta)$ and any stage $\gamma < \delta$, the iterand must satisfy:
$$ \forall \zeta < \omega_1 \Big( \mathbb{Q}(\gamma)^{G_{\gamma}} \neq \operatorname{Code}(x_0,a,b,0,\zeta) \land \mathbb{Q}(\gamma)^{G_{\gamma}} \neq \operatorname{Code}(x_1,a,b,1,\zeta) \Big) $$
Note that as a consequence of this restriction, $x_0 \in B_0$ and $x_1 \in B_1$ will remain true in all $\alpha_{\beta} +1$-allowable generic extensions of $W[G_{\beta+1}]$.
Then we argue as follows:
\begin{lemma}
Let $M$ be an outer model of $W[G_{\beta+1}]$ obtained via
an $\alpha_{\beta}+1$-allowable forcing. Then $B_m(y)$ and $B_k(y)$ can not reduce $B_0$ and $B_1$ over $M$.
\end{lemma}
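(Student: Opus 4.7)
The plan is to fix an arbitrary outer model $M \supseteq L[G_{\beta+1}]$ obtained from a further $\alpha_\beta+1$-allowable forcing over $L[G_{\beta+1}]$ and to derive a contradiction from the assumption that $B_m(y), B_k(y)$ reduce $B_0, B_1$ in $M$. The witness producing the contradiction will be the real $x_1$.

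\emph{Preservation of the configuration.} First I would verify that in $M$ the following three facts hold: $(1)$ $x_1 \notin B_0$, $(2)$ $x_1 \in B_1$, and $(3)$ $x_1 \notin B_k(y)$; the symmetric facts $x_0 \in B_0 \setminus B_1$ and $x_0 \notin B_k(y)$ can be established identically but will not be needed for the contradiction. Facts $(1)$ and $(3)$ are immediate from upwards absoluteness of $\Sigma^1_3$-formulas: the statement that $(x_1, a, b)$ is coded into $\vec{S}^0$ is $\Sigma^1_3$ in $x_1$ (via the formula from Lemma \ref{nounwantedcodes}) and was made true in $L[G_{\beta+1}]$ by the factor $\operatorname{Code}(x_1, a, b, 0)$ at stage $\beta$, while $\varphi_k(x_1, y)$ is $\Sigma^1_3$ and holds in $L[G_{\beta+1}]$ by the Case 2a hypothesis $x_1 \in B_m(y) \setminus B_k(y)$. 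Fact $(2)$ is the key point and rests on the very definition of $\alpha_\beta+1$-allowable: the enlargement $E_{\alpha_\beta+1} = E_{\alpha_\beta} \cup \{(x_0,0),(x_1,1)\}$ prohibits the coding factor $\operatorname{Code}(x_1, a, b, 1)$ from appearing anywhere in a further $\alpha_\beta+1$-allowable iteration, and this combined with the appropriate generalization of Lemma \ref{nounwantedcodes} rules out any accidental coding of $(x_1, a, b)$ into $\vec{S}^1$, so that $x_1 \in B_1$ in $M$.

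\emph{The diagonalization.} With the configuration secured, the remainder is short. Suppose towards a contradiction that in $M$ we have $B_m(y) \subset B_0$, $B_k(y) \subset B_1$, $B_m(y) \cap B_k(y) = \emptyset$, and $B_m(y) \cup B_k(y) = B_0 \cup B_1$. From $(2)$, $x_1 \in B_1 \subset B_0 \cup B_1 = B_m(y) \cup B_k(y)$; from $(3)$, $x_1 \notin B_k(y)$; hence $x_1 \in B_m(y) \subset B_0$, contradicting $(1)$.

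\emph{The main obstacle.} The delicate step is fact $(2)$, namely the extension of Lemma \ref{nounwantedcodes} to the setting of $\alpha_\beta+1$-allowable forcings. The underlying pull-out argument should transfer with no essential change: any Suslin tree in $\vec{S}^1$ whose branch would be needed to witness a code of $(x_1, a, b)$ is untouched by every factor permitted under the rule $E_{\alpha_\beta+1}$; by the independence of $\vec{S}$, any such tree remains Suslin in the generic extension $M$, and hence cannot acquire an $\omega_1$-branch inside any inner model $L[r]$ for $r \in M$. Once this preservation statement is at hand, the contradiction above closes the argument and yields the lemma.
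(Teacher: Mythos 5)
Your three preserved facts about $x_1$ are correct and are established for the right reasons: $x_1\notin B_0$ and $x_1\notin B_k(y)$ by upward absoluteness of $\Sigma^1_3$-statements, and $x_1\in B_1$ because $\operatorname{Code}(x_1,a,b,1)$ is banned in all further $\alpha_\beta+1$-allowable forcings together with the pull-out argument of Lemma \ref{nounwantedcodes}; this is exactly the justification the paper gives (parenthetically, in the definition of Case 2a) for the persistence of $x_0\in B_0$ and $x_1\in B_1$. However, your diagonalization as written only refutes one of the two possible ways the pair $\{B_m(y),B_k(y)\}$ could reduce $(B_0,B_1)$, namely the assignment $R_0=B_m(y)\subset B_0$, $R_1=B_k(y)\subset B_1$. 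For the other assignment, $R_0=B_k(y)\subset B_0$ and $R_1=B_m(y)\subset B_1$, your chain of inferences gives $x_1\in B_1\subset B_m(y)\cup B_k(y)$, $x_1\notin B_k(y)$, hence $x_1\in B_m(y)\subset B_1$ --- which is no contradiction at all. This second assignment is refuted precisely by the ``symmetric facts'' about $x_0$ that you explicitly declare ``will not be needed'': $x_0\in B_0\subset B_m(y)\cup B_k(y)$, $x_0\notin B_k(y)$ (upward absolute from the Case 2a hypothesis), hence $x_0\in B_m(y)\subset B_1$, contradicting $x_0\notin B_1$ (which holds since $(x_0,a,b)$ was coded into $\vec{S}^1$). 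So the gap is real but is closed by one additional sentence using exactly the ingredients you already have.

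For comparison, the paper organizes the proof as a case distinction on whether $x_0$ and/or $x_1$ remain in $B_m(y)$ in $M$: if both do, then $B_m(y)$ contains a point outside $B_0$ and a point outside $B_1$, so it can serve as neither reducing set regardless of the pairing; if at least one of them drops out of $B_m(y)$, then that real lies in $B_0\cup B_1$ but outside $B_m(y)\cup B_k(y)$, so the union condition fails --- again a pairing-independent conclusion. Your version, once patched with the $x_0$-argument for the second pairing, is arguably cleaner (a single uniform derivation from the preserved facts rather than four subcases), and it makes more explicit than the paper does where the appeal to the ``no accidental coding'' lemma enters.
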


\begin{proof}
We split into several cases.
Assume first that in $M \supset W[G_{\beta+1}]$, $x_0,x_1 \in B_m(y)$ still holds true. Then, as $x_0 \in B_0 \setminus B_1$ and $x_1 \in B_1 \setminus B_0$, the set $B_m(y)$ can neither reduce $B_0$ nor $B_1$.  Indeed $x_0$ witnesses that $B_m (y) $ is not a subset of $B_1$ and $x_1$ witnesses that $B_m(y)$ is not a subset of $B_0$.  If, in a further $\alpha_{\beta}+1$-allowable extension of $M \supset W[G_{\beta+1}]$, $x_0 \notin B_m(y)$ or $x_1 \notin B_m(y)$ then this will be dealt with in the next subcases.

If in $M \supset W[G_{\beta+1}]$, $x_0 \notin B_m(y)$, yet $x_1 \in B_m(y)$ then $x_0 \notin B_m(y) \cup B_k(y)$ and $x_0 \in B_0$ in all $\alpha_{\beta}+1$-allowable generic extensions of $M$. So $B_m(y) \cup B_k(y) \ne B_0 \cup B_1$ and $B_m(y), B_k(y)$ can not reduce $B_0, B_1$. The same argument works if $x_1 \notin B_m(y)$ and $x_0 \in B_m(y)$.

If in $M$, $x_0, x_1 \notin B_m(y)$ anymore, then again $B_m(y),B_k(y)$ can not reduce $B_0,B_1$.

\end{proof}

\item[Case 2b:] The dual situation in which we can force $x_0 \ne x_1$ such that $x_0, x_1 \in B_k(y) \setminus B_m(y)$ is dealt with in the analogous way.

\item[Case 2c:] If neither 2a nor 2b are true, then one cannot force two distinct reals $x_0$, $x_1$ into $B_m(y) \setminus B_k(y)$ with $\alpha_{\beta}$-allowable forcings which do not contain $\operatorname{Code} (x_0,a,b,i,\eta)$ or $\operatorname{Code} (x_1,a,b,i,\eta)$ for $i \in \{0,1\}$ and $\eta <\omega_1$. And the same holds true for $B_k(y) \setminus B_m(y)$. 

But it is straightforward to use an $\alpha_{\beta}$-allowable forcing $\forceP$ over $W[G_{\beta}]$ which will add $\aleph_1$-many new reals $(z_i \mid i < \omega_1)$ and for any $\eta <\omega_1$ neither $\operatorname{Code} (z_i,a,b,0,\eta)$ nor $\operatorname{Code} (z_i,a,b,1,\eta)$ is a factor of $\forceP$\footnote{Indeed every coding forcing $\operatorname{Code}(x',y',m_0,k_0,0,\eta')$ for $m_0,k_0$ being G\"odel numbers of two $\Sigma^1_3$-formulas which are always false, for arbitrary reals $x'$ and $y'$ and an $\eta'$ which satisfies the freshness condition (note that using such a forcing is always permitted by the rules of allowability) will add $\aleph_1$-many new reals which are as desired.}.
    
We force with such a $\forceP$, let $G \subset \forceP$ be generic over $W[G_{\beta}]$, and let $W[G_{\beta+1}]=W[G_{\beta}] [G]$. We define $\alpha_{\beta} +1$-allowable forcing by enlarging the restricted set $E_{\alpha_{\beta}+1} := E_{\alpha_{\beta}} \cup \{ (z_i, j) \mid i < \omega_1, j \in \{0, 1\} \}$. Formally, this introduces the additional rule that for any future $\alpha_{\beta}+1$-allowable iteration $\mathbb{Q} = (\mathbb{Q}_{\gamma} \mid \gamma < \delta)$ and any stage $\gamma < \delta$, the iterand must satisfy:
$$ \forall i < \omega_1 \, \forall j \in \{0, 1\} \, \forall \eta < \omega_1 \Big( \mathbb{Q}(\gamma)^{G_{\gamma}} \neq \operatorname{Code}(z_i, a, b, j, \eta) \Big). $$
The upshot of this is that now the following holds.

\begin{lemma}
If $M$ is an outer model of $W[G_{\beta+1}]$, obtained with a further $\alpha_{\beta}+1$-allowable forcing. Then $B_m(y)$ and $B_k (y)$ can not reduce $B_0,B_1$.

\end{lemma}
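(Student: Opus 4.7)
The plan is to exploit the case 2c hypothesis together with Lemma~\ref{nounwantedcodes} to show that in $M$ neither the disjointness nor the union-equality demanded by a reduction of $B_0,B_1$ by $B_m(y),B_k(y)$ can hold.

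First I would establish that at most one of the newly added reals $z_i$ lies in $B_m(y)\setminus B_k(y)$ in $M$, and symmetrically at most one in $B_k(y)\setminus B_m(y)$. Suppose toward contradiction that distinct $z_i,z_j$ both lie in $B_m(y)\setminus B_k(y)$ in $M$, and let $\forceR$ be the $\alpha_\beta+1$-allowable forcing over $L[G_{\beta+1}]$ producing $M$. By definition $\alpha_\beta+1$-allowability extends $\alpha_\beta$-allowability with extra constraints, so $\forceR$ is in particular $\alpha_\beta$-allowable; combining with $\forceP$ and invoking closure of allowable forcings under iteration (Lemma~\ref{productallowable} and the preceding remarks), $\forceP \ast \forceR$ is $\alpha_\beta$-allowable over $L[G_\beta]$. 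Moreover neither factor uses $\operatorname{Code}(z_i,a,b,\ell)$ or $\operatorname{Code}(z_j,a,b,\ell)$ for $\ell \in \{0,1\}$: this is built into the construction of $\forceP$ in case 2c, and into the $\alpha_\beta+1$-allowability of $\forceR$, which explicitly forbids these codes because $z_i,z_j$ were placed into $E_1$. Restricting below a condition of $\forceP \ast \forceR$ forcing $z_i,z_j \in B_m(y) \setminus B_k(y)$ yields precisely the configuration that case 2c rules out, a contradiction.

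Next I would verify that every $z_i$ belongs to $B_0 \cap B_1$ in $M$. Since neither $\forceP$ nor any $\alpha_\beta+1$-allowable $\forceR$ contains $\operatorname{Code}(z_i,a,b,0)$ or $\operatorname{Code}(z_i,a,b,1)$ as a factor, Lemma~\ref{nounwantedcodes}, applied to the total allowable iteration from $W$ up to $M$, implies that $(z_i,a,b)$ is coded into neither $\vec{S}^0$ nor $\vec{S}^1$. Hence the $\Pi^1_3$-statements $\psi_0(z_i)$ and $\psi_1(z_i)$ both hold in $M$, i.e.\ $z_i \in B_0 \cap B_1$, and in particular $z_i \in B_0 \cup B_1$.

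Combining the two steps, all but at most two of the $\aleph_1$-many $z_i$'s lie in $(B_m(y) \cap B_k(y)) \cup (B_m(y) \cup B_k(y))^c$ while simultaneously lying in $B_0 \cup B_1$. Pick any such $z_{i_0}$. If $z_{i_0} \in B_m(y) \cap B_k(y)$ then the reducing pair $(B_m(y),B_k(y))$ fails to be disjoint, so it cannot reduce $B_0,B_1$. If instead $z_{i_0} \notin B_m(y) \cup B_k(y)$ then $z_{i_0} \in (B_0 \cup B_1) \setminus (B_m(y) \cup B_k(y))$, so the union-equality condition of reduction fails. Either way the lemma follows. The main obstacle I anticipate is the bookkeeping verification that the composite $\forceP \ast \forceR$ truly falls within the scope of the case 2c non-forcing hypothesis---one must ensure that the clause ``$\operatorname{Code}(z_i,a,b,\ell)$ is not used as a factor'' remains well-defined when $z_i$ is realised only in the intermediate extension, but this is handled by passing to the canonical $\forceP$-names for the $z_i$ and concatenating the bookkeepings of $\forceP$ and $\forceR$ in the obvious way.
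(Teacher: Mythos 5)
Your proposal is correct and follows essentially the same route as the paper's (much terser) proof: the paper likewise observes that every $z_i$ remains in $B_0 \cup B_1$ because the forbidden coding forcings are never used, and that the case 2c hypothesis forces any pair $z_i \ne z_j$ to land either both in $B_m(y) \cap B_k(y)$ or both outside $B_m(y) \cup B_k(y)$, killing disjointness or the union equality respectively. Your write-up merely fills in the details the paper leaves implicit (the concatenation $\forceP \ast \forceR$ being $\alpha_\beta$-allowable without the forbidden codes, and the appeal to Lemma~\ref{nounwantedcodes} for $z_i \in B_0 \cap B_1$).
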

\begin{proof}
 For every $\alpha_{\beta}+1$-allowable extension $M$ of $W[G_{\beta+1}]$, $\forall i < \omega_1 (z_i \in B_0 \cup B_1)$ must hold in $M$, yet for any pair $z_i \ne z_j$,
 $M \models z_i, z_j \in B_m(y) \cap B_k (y)$ or $M \models z_i,z_j \notin (B_m(y) \cup B_k(y) ) $. In both cases, $z_i, z_j$ witness that $B_m(y)$ and $B_k(y)$ can not reduce $B_0,B_1$ over $M$.

\end{proof}

 To summarize: in both cases we defined an extension $W[G_{\beta+1} ]$ of $W[G_{\beta}]$, and the notion of $\alpha_{\beta}+1$-allowable forcings. Additionally we found reals which witness that $B_m(y)$ and $B_k(y)$ can not reduce $B_0$ and $B_1$ in all further outer models $M$ of $W[G_{\beta+1}]$ which are obtained with a further $\alpha_{\beta}+1$-allowable forcing. 
\end{enumerate}

At limit stages $\beta$, we use the finite support to define the limit partial order $\forceP_{\beta}$ and set $E_{\alpha_{\beta}}= \bigcup_{\eta < \beta}  E_{\alpha_{\eta}}$.
This ends the definition of $\forceP_{\omega_1}$.

\section{Discussion of the resulting universe}
We let $G_{\omega_1}$ be a $\forceP_{\omega_1}$-generic filter over $W$.
As a first note we observe that for every real $r$ in $W[G_{\omega_1}]$ there is a $\beta < \omega_1$ such that $r \in W[G_{\beta}]$ by standard facts about finite support iterations of ccc forcings. 
In particular, if a $\Sigma^1_3$-statement is true in $W[G_{\omega_1}]$ it will already be true in some $W[G_{\beta}]$.
As $W[G_{\omega_1}]$ is a proper extension of $W$, $\omega_1$ is preserved, in fact all cardinals in $L$ are preserved but we will not need this. Moreover $\CH$ remains true by Lemma \ref{FirstPropertiesOfAllowableForcings}.

A second observation is that for every stage $\beta$ of our iteration and every $\eta > \beta$, the intermediate forcing $\forceP_{[\beta, \eta)}$, defined as the factor forcing of $\forceP_{\beta}$ and $\forceP_{\eta}$, is always an $\alpha_{\beta}$-allowable forcing relative to $E_{\alpha_{\beta}}$ and some bookkeeping. This is intuitively clear as by the definition of the iteration, we force at every stage $\beta$ with a $\alpha_{\beta}$-allowable forcing  relative to $E_{\alpha_{\beta}}$ and $\alpha_{\beta}$-allowable becomes a stronger notion as we increase $\alpha_{\beta}$.

\begin{lemma}
For every stage $\beta < \omega_1$ of the iteration and every $\eta$ such that $\beta < \eta < \omega_1$, the intermediate forcing $\mathbb{P}_{[\beta,\eta)}$, defined as the factor forcing such that $\mathbb{P}_{\eta} \cong \mathbb{P}_{\beta} \ast \mathbb{P}_{[\beta,\eta)}$, is an $\alpha_{\beta}$-allowable forcing relative to $E_{\alpha_{\beta}}$ and some bookkeeping function $F'$.
\end{lemma}

\begin{proof}
First, he bookkeeping function $F'$ is the canonical projection of $F$ over $W[G_{\beta}]$. Specifically, if the original bookkeeping function evaluated at stage $\xi \ge \beta$ yields a tuple of $\mathbb{P}_{\xi}$-names $F(\xi) = (\dot{x}, \dot{y}, \dot{m}, \dot{k}, \dot{i}, \dot{\gamma})$, then $F'(\xi - \beta)$ is defined as the tuple of $\mathbb{P}_{[\beta, \xi)}$-names obtained by evaluating those original names using the $\mathbb{P}_{\beta}$-generic filter $G_{\beta}$ (i.e., mapping $\dot{\tau} \mapsto \dot{\tau}^{G_{\beta}}$). Consequently, $F'$ systematically enumerates the exact same intended targets over the intermediate model $W[G_{\beta}]$ that $F$ enumerates over $W$.
We fix $\beta$ and proceed by induction on $\eta > \beta$.

We start with the base case $\eta = \beta + 1$. 
By the definition of the iteration at stage $\beta$, the forcing $\mathbb{P}(\beta)$ (which constitutes $\mathbb{P}_{[\beta, \beta+1)}$) is chosen precisely to be an $\alpha_{\beta}$-allowable forcing relative to $E_{\alpha_{\beta}}$. Thus, the base case trivially holds.

Now we argue for the successor step $\eta \to \eta + 1$.
Assume the inductive hypothesis holds for $\eta$, meaning $\mathbb{P}_{[\beta,\eta)}$ is $\alpha_{\beta}$-allowable with respect to $E_{\alpha_{\beta}}$. At stage $\eta$, the rules of our iteration dictate that we force with $\mathbb{P}(\eta)$, which is chosen to be an $\alpha_{\eta}$-allowable forcing relative to $E_{\alpha_{\eta}}$. 

Because the sequence of ordinals $\alpha_{\xi}$ is non-decreasing along the iteration, we know that $\alpha_{\eta} \ge \alpha_{\beta}$. Consequently, $E_{\alpha_{\beta}} \subseteq E_{\alpha_{\eta}}$. As noted in the remarks on allowability, if a forcing is $\delta_1$-allowable with respect to a set of restrictions $E_{\delta_1}$, it is necessarily $\delta_2$-allowable with respect to the smaller set of restrictions $E_{\delta_1} \restriction \delta_2$ for any $\delta_2 \le \delta_1$. 

Applying this principle, since $\mathbb{P}(\eta)$ is $\alpha_{\eta}$-allowable with respect to $E_{\alpha_{\eta}}$, it is inherently $\alpha_{\beta}$-allowable with respect to $E_{\alpha_{\beta}}$. Because both $\mathbb{P}_{[\beta,\eta)}$ and the iterand $\mathbb{P}(\eta)$ obey the rules of $\alpha_{\beta}$-allowability with respect to $E_{\alpha_{\beta}}$, their concatenation $\mathbb{P}_{[\beta,\eta+1)} \cong \mathbb{P}_{[\beta,\eta)} \ast \mathbb{P}(\eta)$ is also an $\alpha_{\beta}$-allowable forcing relative to $E_{\alpha_{\beta}}$.

Finally we argue for the limit step when $\eta$ is a limit ordinal.
Assume $\eta > \beta$ is a limit ordinal and that for all $\xi \in (\beta, \eta)$, the intermediate forcing $\mathbb{P}_{[\beta,\xi)}$ is $\alpha_{\beta}$-allowable with respect to $E_{\alpha_{\beta}}$. 

By the definition of our iteration, $\mathbb{P}_{[\beta,\eta)}$ is the finite support limit of the sequence $\langle \mathbb{P}_{[\beta,\xi)} \mid \beta \le \xi < \eta \rangle$. By the definition of allowability at limit stages, a finite support iteration of allowable forcings remains allowable under those exact same parameters. Therefore, $\mathbb{P}_{[\beta,\eta)}$ is $\alpha_{\beta}$-allowable with respect to $E_{\alpha_{\beta}}$ and some bookkeeping function $F'$. 

This completes the induction.
\end{proof}

As a third observation we note that the ${\Pi}^1_3$-reduction property fails in $W[G_{\omega_1}]$.

\begin{lemma}
In $W[G_{\omega_1}]$, the sets $B_0$ and $B_1$ cannot be reduced by any pair of boldface $\Pi_3^1$ sets. That is, for any parameter $y \in \omega^\omega \cap W[G_{\omega_1}]$ and any $m, k \in \omega$, the sets $B_m(y)$ and $B_k(y)$ do not reduce $B_0$ and $B_1$.
\end{lemma}

\begin{proof}
Let $y \in W[G_{\omega_1}]$ be an arbitrary real, and let $m, k \in \omega$ be indices for $\Pi_3^1$ formulas. There is some $\gamma < \omega_1$ such that $\dot{y}$ is a valid $\mathbb{P}_\gamma$-name.

By the definition of our bookkeeping function $F$, it lists every tuple in $H(\omega_1)$ cofinally often on both even and odd ordinals. Therefore, there exists an even stage $\beta > \gamma$ such that $F(\beta) = (m, k, \dot{y})$ and  $\dot{y}$ can be evaluated with the generic filter $G_\beta$, meaning $y = \dot{y}^{G_\beta}$ is completely determined in $W[G_\beta]$.

At this even stage $\beta$, the construction of the iteration explicitly works towards the failure of $\Pi_3^1$-reduction for the specific pair $B_m(y)$ and $B_k(y)$. Depending on the forcings available in $W[G_\beta]$, the iteration strictly follows one of the subcases (Case 1, Case 2a, Case 2b, or Case 2c) detailed in Section 9.

As shown in Lemmas 9.1, 9.2, and 9.3, we ensured that in any outer model $M \supset W[G_{\beta+1}]$ obtained via a further $\alpha_\beta+1$-allowable forcing, the sets $B_m(y)$ and $B_k(y)$ cannot reduce $B_0$ and $B_1$.

It remains to show that the final model $W[G_{\omega_1}]$ is exactly such an outer model. The remainder of the iteration, $\mathbb{P}_{[\beta+1, \omega_1)}$, is a finite support iteration of allowable forcings. As shown in the preceding lemma, for any $\xi > \beta$, the intermediate forcing $\mathbb{P}_{[\beta+1, \xi)}$ is an $\alpha_{\beta+1}$-allowable forcing relative to $E_{\alpha_{\beta+1}}$. Because the allowability constraints are strictly upward-preserving (meaning $E_{\alpha_\beta+1} \subseteq E_{\alpha_{\beta+1}}$), any $\alpha_{\beta+1}$-allowable forcing inherently satisfies the strict constraints of $\alpha_\beta+1$-allowability introduced at stage $\beta$. 

Therefore, the tail forcing $\mathbb{P}_{[\beta+1, \omega_1)}$ is an $\alpha_\beta+1$-allowable forcing over $W[G_{\beta+1}]$. Consequently, $W[G_{\omega_1}]$ is an $\alpha_\beta+1$-allowable generic extension of $W[G_{\beta+1}]$. Applying Lemmas 9.1, 9.2, and 9.3, the assertion follows, and we conclude that $B_m(y)$ and $B_k(y)$ fail to reduce $B_0$ and $B_1$ in $W[G_{\omega_1}]$.
\end{proof}

We still need to show that in $W[G_{\omega_1}]$ the $\bf{\Sigma}^1_3$-separation property is true. 
For a pair of disjoint $\Sigma^1_3(y)$-sets, $A_m(y)$ and $A_k(y)$,
we consider the least stage $\beta$ such that there is a $\forceP_{\beta}$-name $\dot{z}$ such that $\dot{z}^{G_{\beta}}=z$ and $(z,y,m,k)$ are considered by $F$ at stage $\beta$. Recall that the generic extension $W[G_{\beta}]$ can be written as $W[R_{\alpha_{\beta}}]$ for  a real $R_{\alpha_{\beta}}$ which codes the countably many, generically added reals $r_i$ we created every time we forced with a forcing of the form $\operatorname{Code} (x,y,m,k,l,\eta)$  which serve as witnesses to the formulas
$\Psi_l(r_i, (x,y,m,k))$.

As mentioned already, the role of $R_{\alpha_{\beta}}$ is that of an error term. We might have added false patterns in our iteration so far, but these false patterns will appear in a coded form in $R_{\alpha_{\beta}}$.  Our definitions will ensure that for the pair $A_m(y)$ and $A_k(y)$, modulo $R_{\alpha_{\beta}}$, the set of reals $x$ for which the quadruple $(x,y,m,k)$ is coded into $\vec{S}^0$, and the set of reals $x$, for which the quadruple $(x,y,m,k)$ is  coded into $\vec{S}^1$ will separate $A_m(y)$ and $A_k(y)$:

\begin{align*}
x \in D^0_{y,m,k} (R_{\alpha_{\beta}})\Leftrightarrow \exists r (&L[r, R_{\alpha_{\beta}}]  \models (x,y,m,k) \text{ can be read off from a code} \\& \text{written on an } \omega_1\text{-many $\omega$-blocks of elements of } \\& \vec{S^0} \text{ and the coding area of $(x,y,m,k)$} \\& \text{is almost disjoint from each coding area in $R_{\alpha_{\beta}}$} ).
\end{align*}
and
\begin{align*}
x \in D^1_{y,m,k} (R_{\alpha_{\beta}})\Leftrightarrow \exists r(&L[r, R_{\alpha_{\beta}}]  \models (x,y,m,k) \text{ can be read off from a code} \\& \text{written on an } \omega_1\text{-many $\omega$-blocks of elements of } \\& \vec{S^1} \text{ and the coding area of $(x,y,m,k)$} \\& \text{is almost disjoint from each coding area in $R_{\alpha_{\beta}}$} ).
\end{align*}

\begin{lemma}
In $W[G_{\omega_1}]$, if $y \in \omega^{\omega}$ is an arbitrary  parameter, $R$ a real and $m,k$ natural numbers, then the sets $D^0_{y,m,k}(R)$ and $D^1_{y,m,k}(R)$ are $\Sigma^1_3(R)$-definable.
\end{lemma}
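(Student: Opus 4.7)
The plan is to reduce the complexity count directly to the analysis of property $(\ast\ast\ast)$ from Section 3 and the formula $\psi(w,i)$, which was established there to be $\Sigma^1_3$ in $w$. Since quadruples $(x,y,m,k)$ can be recursively encoded into a single real, the first conjunct inside the $\exists r$ in the definition of $D^0_{y,m,k}(R)$ — namely that $r$ witnesses $(x,y,m,k)$ being written into $\vec{S^0}$ — is exactly the $\Pi^1_2$-matrix of $\psi$ applied to this coded real and index $i=0$. After the existential quantifier over $r$, this conjunct alone contributes complexity $\Sigma^1_3$ in parameters $x,y,m,k$.

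The second step is to show that the remaining conjunct, the almost disjointness of the coding area of $(x,y,m,k)$ from each coding area in $R_{\alpha_\beta}$, can be absorbed into the same $\Pi^1_2$-matrix without raising complexity. Recall that the coding area is a set $h\subset\omega_1$ which is encoded directly into any witnessing real $r$ via the reshaping of $Y$ described after $({\ast}{\ast})$. Moreover $R$ is by construction a real coding countably many such witnessing reals, and therefore each coding area appearing in $R$ is uniformly decodable from $R$ via an arithmetic operation.

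The key technical step, mirroring the localization used to produce $(\ast\ast\ast)$, is to rephrase almost disjointness as a local property of countable transitive $\ZFP$-models. Specifically, I would argue that almost disjointness of the $h$ decoded from $r$ with each $h'$ decoded from $R$ is equivalent to the assertion that every countable transitive $\ZFP$-model $M$ with $\omega_1^M=(\omega_1^L)^M$ containing $r$ and $R$ computes initial segments of these coding areas and sees their pairwise intersections bounded below $\omega_1^M$. By the same L\"owenheim--Skolem/collapse argument already used after $(\ast\ast\ast)$, this local statement is equivalent to the global one, and has the form $\forall M(\ldots)$ with arithmetic matrix — that is, $\Pi^1_2(r,R)$. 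Conjoining with the $\Pi^1_2$-matrix of $\psi$ and existentially quantifying over $r$ yields the desired $\Sigma^1_3(R)$ definition. The argument for $D^1_{y,m,k}(R)$ is verbatim, replacing $\vec{S^0}$ by $\vec{S^1}$.

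The main obstacle is confirming that the almost disjointness clause admits this local $\Pi^1_2$-formulation, which in turn relies on the fact that the coding areas arising from the $\omega_1$-Cohen subsets used in the construction of $\operatorname{Code}(\cdot)$ have the property that the pairwise intersections of any two almost disjoint members of the relevant family are bounded in $\omega_1$ — a point that is already implicit in Lemma \ref{nounwantedcodes}, where exactly this boundedness was exploited to pull Suslin trees out of the iteration. Once this is in place, the complexity computation is immediate from the syntactic form of $\psi$ and the standard coding of countable transitive models.
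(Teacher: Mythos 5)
Your proposal is correct and follows essentially the same route as the paper, whose proof is the one-line remark that this is a ``standard calculation using the obvious modification of $({\ast}{\ast}{\ast})$ employing $R$''; you have simply spelled out that calculation, including the localization of the almost-disjointness clause to countable transitive $\ZFP$-models so that it stays within the $\Pi^1_2$-matrix before the existential quantifier over $r$.
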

\begin{proof}
The proof is a standard calculation using the obvious modification of the formula $\Phi$ employing $R$ as the set which codes the coding areas a real must avoid to be in $D^0_{y,m,k} (R)$ or $D^1_{y,m,k}(R)$.
\end{proof}
It follows  from the definition of the iteration that for any  real parameter $y$ and any $m,k \in \omega$, $D^0_{y,m,k} (R_{\alpha_{\beta}})  \cup D^1_{y,m,k} (R_{\alpha_{\beta}}) = \omega^{\omega}$, at least as soon as $A_m(y) \cap A_k(y) =\emptyset$ in $W[G_{\omega_1}]$. Indeed, as our bookkeeping function visits each tuple of a reals in our iteration $\aleph_1$-many times, the tuple $(x,y,m,k)$  will be considered by the bookkeeping unboundedly often. Let $\beta$ be the least such stage. As for this tuple, case (B) (i), (ii) or (iii) applies, the tuple gets coded into $\vec{S}^0$ or $\vec{S}^1$ at stages $\beta' > \beta$, which gives that $D^0_{y,m,k} (R_{\alpha_{\beta}})  \cup D^1_{y,m,k} (R_{\alpha_{\beta}}) = \omega^{\omega}$.
The next lemma establishes that the sets are indeed separating.
\begin{lemma}
In $W[G_{\omega_1}]$, let $y$ be a real and let $m,k \in \omega$ be such that $A_m(y) \cap A_k(y)=\emptyset.$ Then there is an real $R$ such that the sets  $D^0_{y,m,k}(R)$ and $ D^1_{y,m,k}(R)$ partition the reals. 
\end{lemma}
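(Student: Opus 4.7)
The plan is to take $R:=R_{\alpha_{\beta^*}}$, where $\beta^*$ is the least odd stage at which the bookkeeping $F$ visits a quadruple $(\dot x,\dot y,m,k)$ with $\dot y^{G_{\beta^*}}=y$, and then to verify the two sets are exhaustive and disjoint. Exhaustiveness, $D^0_{y,m,k}(R)\cup D^1_{y,m,k}(R)=\omega^\omega$, has essentially been argued in the paragraph before the lemma: every real $x\in W[G_{\omega_1}]$ has a $\forceP_{\omega_1}$-name that $F$ revisits at unboundedly many odd stages past $\beta^*$, and at each such visit the odd-stage construction inserts a factor $\operatorname{Code}((x,y,m,k),i)$ for some $i\in\{0,1\}$. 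The associated coding area is derived from a fresh $(\mathbb{C}(\omega_1))^L$-generic added past $\beta^*$, hence is almost disjoint from every coding area catalogued in $R$, putting $x$ into $D^i_{y,m,k}(R)$.

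For disjointness I argue by contradiction. A preliminary observation is that case (A) of the odd-stage construction never fires for parameters $(y,m,k)$: a witnessing allowable forcing there would force the $\Sigma^1_3$-statement $\exists z\,(z\in A_m(y)\cap A_k(y))$, which would persist to $W[G_{\omega_1}]$ by upward $\Sigma^1_3$-absoluteness, contradicting the hypothesis $A_m(y)\cap A_k(y)=\emptyset$. Hence along the whole iteration only cases (B)(i)--(iii) are ever applied to quadruples with parameters $(y,m,k)$. Now suppose $x\in D^0\cap D^1$. By Lemma~\ref{nounwantedcodes} applied inside $W[G_{\omega_1}]$, combined with the almost-disjointness-from-$R$ clauses, both codes for $(x,y,m,k)$ correspond to genuine coding factors of $\forceP_{\omega_1}$ appearing at stages $\beta_0',\beta_1'>\beta^*$; WLOG $\beta_0'<\beta_1'$. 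At $\beta_0'$ case (B)(i) must have applied, so $W[G_{\beta_0'}]$ sees an $\alpha_{\beta_0'}$-allowable forcing $\forceQ_0$ with $\forceQ_0\Vdash\varphi_m(x,y)$; at $\beta_1'$ case (B)(i) must have failed, so no $\alpha_{\beta_1'}$-allowable forcing over $W[G_{\beta_1'}]$ forces $\varphi_m(x,y)$.

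The contradiction is obtained by transporting $\forceQ_0$ to $W[G_{\beta_1'}]$ as an $\alpha_{\beta_1'}$-allowable witness still forcing $\varphi_m(x,y)$. The intermediate iteration $\forceP_{\beta_0',\beta_1'}$ is itself $\alpha_{\beta_0'}$-allowable over $W[G_{\beta_0'}]$ by the second observation at the start of the current section, so by Lemma~\ref{productallowable} the product $\forceQ_0\times\forceP_{\beta_0',\beta_1'}$ is $\alpha_{\beta_0'}$-allowable; rewriting it as the iteration $\forceP_{\beta_0',\beta_1'}\ast\forceQ_0$ exhibits $\forceQ_0$ as an allowable forcing over $W[G_{\beta_1'}]$. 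The reals banned by $E_{\alpha_{\beta_1'}}\setminus E_{\alpha_{\beta_0'}}$ were all introduced by the even-stage rules between $\beta_0'$ and $\beta_1'$, hence are fresh reals not belonging to $W[G_{\beta_0'}]$ and so cannot appear in any factor of $\forceQ_0$; consequently $\forceQ_0$ is in fact $\alpha_{\beta_1'}$-allowable over $W[G_{\beta_1'}]$ relative to $E_{\alpha_{\beta_1'}}$. Finally, $\forceQ_0\Vdash\varphi_m(x,y)$ transports from $W[G_{\beta_0'}]$ to $W[G_{\beta_1'}]$ by upward $\Sigma^1_3$-absoluteness: writing $\varphi_m(x,y)=\exists z\,\pi(z,x,y)$ with $\pi\in\Pi^1_2$, a generic witness $z$ in $W[G_{\beta_0'}][\forceQ_0]$ lies also in $W[G_{\beta_1'}][\forceQ_0]$, and Shoenfield absoluteness preserves $\pi(z,x,y)$ between these transitive extensions.

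The main obstacle is the transport step in the previous paragraph: verifying that $\forceQ_0$ really remains $\alpha_{\beta_1'}$-allowable under the change of ground model and the rewriting of product into iteration. Heuristically it suffices that the newly banned reals are fresh and that Lemma~\ref{productallowable} applies, but in practice one must carefully match the structural rules defining $\alpha$-allowability against the shuffled factor structure of $\forceP_{\beta_0',\beta_1'}\ast\forceQ_0$ and keep the $E_0$- and $E_1$-constraints separate throughout the rearrangement.
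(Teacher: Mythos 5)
Your choice of $R$, the treatment of exhaustiveness, and the general shape of the disjointness argument (contradiction via Lemma~\ref{productallowable} plus upward $\Sigma^1_3$-absoluteness) all match the intended proof. But the contradiction you derive is aimed at the wrong stage, and the step you yourself flag as ``the main obstacle'' is a genuine gap, not a technicality. You transport the witness $\forceQ_0$ \emph{forward} from $W[G_{\beta_0'}]$ to $W[G_{\beta_1'}]$ and need it to be $\alpha_{\beta_1'}$-allowable there. Since $\alpha_{\beta_1'}\ge\alpha_{\beta_0'}$ and the notion of $\alpha$-allowable becomes strictly \emph{more} restrictive as $\alpha$ grows, this is a promotion to a stronger property, and nothing in the construction licenses it. Your justification (``the newly banned reals are fresh'') only addresses the $E_1$-part of $E_{\alpha_{\beta_1'}}\setminus E_{\alpha_{\beta_0'}}$. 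The $E_0$-part also grows between $\beta_0'$ and $\beta_1'$: new triples $(\dot y_\eta,m_\eta,k_\eta)$ enter $E_0$ at the odd stages in between, and for each of them the cases 1--3 of the inductive definition impose new rules on which coding direction a factor of an allowable forcing may take. A factor of $\forceQ_0$ whose bookkeeping entry involved such a triple was governed by the unconstrained case 4 at rank $\alpha_{\beta_0'}$ but falls under cases 1--3 at rank $\alpha_{\beta_1'}$, and there is no reason $\forceQ_0$ obeys these. Indeed, if such upgrades were automatic the whole ordinal stratification of allowability would be vacuous; the construction is designed precisely so that ``forceable by an allowable forcing'' shrinks as $\alpha$ increases.

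The intended argument avoids this entirely by only ever moving witnesses \emph{backward}, i.e.\ downgrading rank, which is automatic by definition. Working at the first stage $\delta$ where $(z,y,m,k)$ is listed (where, say, B(i) applied, yielding an $\alpha_\delta$-allowable $\forceQ_\delta\Vdash z\in A_m(y)$), one takes the later stage $\gamma$ where the $\vec S^1$-code was placed via B(ii), notes that $\forceQ_\gamma$ and the intermediate iteration $\forceP_{\delta,\gamma}$ are both $\alpha_\delta$-allowable over $W[G_\delta]$, and forms the $\alpha_\delta$-allowable product $\forceQ_\delta\times(\forceP_{\delta,\gamma}{}^\frown\forceQ_\gamma)$, which by upward absoluteness forces $z\in A_m(y)\cap A_k(y)$. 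This contradicts the failure of case (A) at $\delta$ --- a condition quantifying only over $\alpha_\delta$-allowable forcings, so no rank upgrade is ever needed. You should restructure your disjointness argument along these lines; as written, the forward-transport step does not go through.
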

\begin{proof}
Let $\beta$ be the least stage such that there is a real $x$ such that $F(\beta) =(\dot{x}, \dot{y},\dot{m},\dot{k})$ with $\dot{x}^G_{\beta}=x$, $\dot{y}^G_{\beta}=y$. Let $R$ be $R_{\alpha_{\beta}}$ for $R_{\alpha_{\beta}}$ being defined as above. Then, as $A_m(y)$ and $A_k(y)$ are disjoint in $W[G_{\omega_1}]$, by the rules of the iteration, case B must apply at $\beta$. 

Assume now for a contradiction, that $D^0_{y,m,k}(R)$ and $ D^1_{y,m,k}(R)$ do have non-empty intersection in $W[G_{\omega_1}]$. Let $z \in D^0_{y,m,k}(R) \cap D^1_{y,m,k}(R)$ and let $\eta > \beta$ be the first stage of the iteration which contains the witnessing reals for the two $\Sigma^1_3$-sets $D^0_{y,m,k}(R)$ and $D^1_{y,m,k}(R)$ which confirm that $z$ is in the intersection of these two sets.

Then, by the rules of the iteration and as there is a $z \in D^0_{y,m,k}(R) \cap D^1_{y,m,k}(R)$ we must have forced with $\operatorname{Code} (z,y,m,k,1,\eta)$ and with $\operatorname{Code} (z,y,m,k,0,\eta)$ in our iterations at some stages. We assume first that we must have used case B(i) at the first stage  $\delta \ge \beta$ of the iteration where $(z,y,m,k)$ is listed by the bookkeeping $F$, and case B(ii) at  a later stage $\gamma < \eta$. But this would imply, that at stage $\delta$, there is an $\alpha_{\delta}$-allowable forcing  $\forceQ_{\delta}$ with respect to $E_{\alpha_{\delta}}$, which forces $z \in A_m(y)$, yet at stage $\gamma > \delta$, there is an $\alpha_{\gamma}$-allowable forcing which forces $z \in A_k(y)$.
As $\gamma> \delta$, the $\alpha_{\gamma}$-allowable forcing $\forceQ_{\gamma}$ which witnesses that we are in case B (ii) at stage $\gamma$ in our iteration, is also $\alpha_{\delta}$-allowable. But this means that, over $W[G_{\delta}]$, the intermediate forcing $\forceP_{\delta,\gamma}$, which is also $\alpha_{\delta}$-allowable can be extended to the $\alpha_{\delta}$-allowable forcing which first uses $\forceP_{\delta,\gamma}$ and then $\forceQ_{\gamma}$  yielding an $\alpha_{\delta}$-allowable forcing which forces $z \in A_k(y)$. Note here that, as in the proof of Lemma \ref{productallowable} we can shift the coding areas of $\forceQ_{\gamma}$, and therefore guarantee the crucial condition $C^{\forceP_{\delta,\gamma}} \cap C^{\forceQ_{\gamma}}= \emptyset.$

On the other hand, in $W[G_{\delta}]$ we do have $\forceQ_{\delta}$ which forces $z \in A_m(y)$, as we assumed that at stage $\delta$ we are in case B (i). Note that again, we can shift the coding areas for this $\forceQ_{\delta}$ at will. 
Now the product $\forceQ_{\delta} \times (\forceP_{\delta, \gamma} \ast \, \forceQ_{\gamma})$ is $\alpha_{\delta}$-allowable, and by upwards absoluteness of $\Sigma^1_3$-formulas we get that 
\[ \forceQ_{\delta} \times (\forceP_{\delta,\gamma} \, \ast \, \forceQ_{\gamma}) \Vdash z \in A_m(y) \cap A_k(y). \]
But this would mean that at stage $\delta$, we are in case A in the definition of our iteration, which is a contradiction. This finishes the cases where B(i) was first applied when the bookkeeping listed $(z,y,m,k)$ for the first time and then case B(ii) was applied at a later stage when the bookkeeping revisited $(z,y,m,k)$. The case where first B(ii) was applied and then B(i) leads to a contradiction in an analogous way.

If we assume that we were in case B(i) at the first stage $\delta \ge \beta$ of the iteration where $(z,y,m,k)$ is listed by the bookkeeping $F$, and case B(iii) at a later stage $\gamma < \eta$ then by the definition of the iteration we have to force at both stages with $\operatorname{Code} (z,y,m,k,0,\eta)$ and with $\operatorname{Code} (z,y,m,k,0,\eta')$ for two distinct ordinals $\eta,\eta'$. This is a contradiction to our assumption that $z \in  D^0_{y,m,k}(R) \cap D^1_{y,m,k}(R)$.

In the case where we first applied case B(ii) and then later B(iii), we argue similarly to the above paragraph.

Finally if we were in case B(iii) first when the bookkeeping hit $(z,y,m,k)$ for the first time, and later in the iteration at case B(ii) when the bookkeeping hit $(z,y,m,k)$ again, then we argue that this is impossible as well. Indeed as the set of forcings which are $\alpha_{\beta}$-allowable shrinks as we increase $\alpha_{\beta}$, being in case B(iii) first and then in B(ii) at a later stage is impossible as the forcing $\forceQ$ witnessing that we are in case B(ii) would also witness that we can not be in case B(iii) at an earlier stage as there case B(ii) has to apply as well.

This ends the discussion of all possible cases and proves the lemma.

\end{proof}

\begin{lemma}
In $W[G_{\omega_1}]$, for every pair $m,k$ and every parameter $y \in \omega^{\omega}$ such that $A_m(y) \cap A_k(y) = \emptyset$ there is a real $R$ such that 
\[ A_m(y) \subset D^0_{y,m,k}(R) \land A_k(y) \subset D^1_{y,m,k}(R)\]
\end{lemma}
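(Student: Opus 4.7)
The plan is to reuse the $R$ from the previous lemma: let $\beta$ be the least stage at which $F$ hands us some quadruple $(\dot{x}',\dot{y},m,k)$ with $\dot{y}^{G_\beta}=y$, and set $R:=R_{\alpha_\beta}$. For a given $x\in A_m(y)$ I then want to find a later stage $\delta\ge\beta$ at which the iteration actually codes $(x,y,m,k)$ into $\vec{S}^0$, and symmetrically for each $x\in A_k(y)$ and $\vec{S}^1$.

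The first key observation is that case A never applies at any stage $\delta$ whose bookkeeping value is of the form $(\dot{x}',\dot{y},m,k)$ with $\dot{y}^{G_\delta}=y$. Indeed, if case A applied at such a $\delta$, then $\forceP(\delta)$ would be an $\alpha_\delta$-allowable forcing forcing $\exists z(z\in A_m(y)\cap A_k(y))$, and $\Sigma^1_3$-upward-absoluteness would propagate this witness all the way up to $W[G_{\omega_1}]$, contradicting the assumed disjointness. Hence at every such stage one of B(i), B(ii), B(iii) applies instead.

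Given $x\in A_m(y)$, since $F$ visits every value cofinally often I pick $\delta\ge\beta$ with $F(\delta)=(\dot{x},\dot{y},m,k)$, $\dot{x}^{G_\delta}=x$, $\dot{y}^{G_\delta}=y$. Case A is excluded, and I claim case B(i) applies. The $\Sigma^1_3$-witness for $x\in A_m(y)$ in $W[G_{\omega_1}]$ is a single real which must already lie in $W[G_\eta]$ for some $\eta<\omega_1$; so the intermediate forcing $\forceP_{[\delta,\eta)}$, restricted below the appropriate condition, forces $x\in A_m(y)$, and by the observation in the discussion section it is $\alpha_\delta$-allowable with respect to $E_{\alpha_\delta}$. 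Hence $\forceP(\delta)=\operatorname{Code}(x,y,m,k,0)$, which writes $(x,y,m,k)$ into $\vec{S}^0$ on a coding area $h_\delta$ derived from a fresh $\mathbb{C}(\omega_1)^L$-generic. Since $h_\delta$ is produced by a generic added strictly after $R_{\alpha_\beta}$ was formed, it is almost disjoint from every coding area recorded in $R$, and we obtain $x\in D^0_{y,m,k}(R)$.

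For $x\in A_k(y)$ the argument is dual, the only subtlety being that case B(ii) requires case B(i) to fail. Suppose for contradiction that at the chosen stage $\delta$ there were an $\alpha_\delta$-allowable $\forceQ_1\Vdash x\in A_m(y)$ witnessing B(i), while a tail segment $\forceQ_2=\forceP_{[\delta,\eta)}$ forces $x\in A_k(y)$ as in the preceding paragraph. Then by Lemma~\ref{productallowable} the product $\forceQ_1\times\forceQ_2$ is $\alpha_\delta$-allowable with respect to $E_{\alpha_\delta}$, and by $\Sigma^1_3$-upward-absoluteness it forces $x\in A_m(y)\cap A_k(y)$, putting us in case A at stage $\delta$ — contradicting the preliminary observation. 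Hence case B(i) fails and case B(ii) applies, so $(x,y,m,k)$ gets written into $\vec{S}^1$ on a fresh coding area almost disjoint from $R$, yielding $x\in D^1_{y,m,k}(R)$. The main delicacy I anticipate is bookkeeping-level: ensuring that the tail segments $\forceP_{[\delta,\eta)}$ are genuinely $\alpha_\delta$-allowable with respect to the \emph{same} set $E_{\alpha_\delta}$ that the iteration is using at stage $\delta$ (rather than merely with respect to some later, stronger $E$), and that the freshly added $h_\delta$ really meets the almost-disjointness clause built into the definitions of $D^0_{y,m,k}(R)$ and $D^1_{y,m,k}(R)$.
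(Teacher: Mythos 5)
Your argument is correct and takes essentially the same route as the paper's: the decisive step in both is that a tail segment $\forceP_{[\delta,\eta)}$ of the iteration is itself $\alpha_{\delta}$-allowable and (below a suitable condition) forces the relevant $\Sigma^1_3$-membership, so that Lemma~\ref{productallowable} together with upward absoluteness of $\Sigma^1_3$ would place stage $\delta$ into case A if the wrong coding case were applied. The paper merely packages this as a proof by contradiction, starting from a hypothetical $x\in A_m(y)\cap D^1_{y,m,k}(R)$, whereas you derive the two inclusions directly; the underlying mechanism and the choice of $R=R_{\alpha_{\beta}}$ are the same.
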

\begin{proof}
The proof is by contradiction. Assume that $m,k$ and $y$ are such that for every real $R$ there is $z$ such that $z \in A_m(y) \cap D^1_{y,m,k}(R)$ or $z \in A_k(y) \cap D^1_{y,m,k} (R)$.
We consider the smallest ordinal $\beta < \omega_1$ such that  $F(\beta)$ lists a quadruple of the form $(x,y,m,k)$ for which $W[G_{\omega_1}] \models x \in A_m(y) \cap D^1_{y,m,k}$ and let $R= R_{\alpha_{\beta}}$. As $A_m(y)$ and $A_k(y)$ are disjoint we know that at stage $\beta$ we were in case B. 
As $x$ is coded into $\vec{S^1}$ after stage $\beta$ and by the last Lemma, Case B(i) is impossible at $\beta$. Hence, without loss of generality we may assume that case B(ii) applies at $\beta$. As a consequence, there is a forcing $\forceQ_2 \in W[G_{\beta} ]$ which is $\alpha_{\beta}$-allowable with respect to $E_{\alpha_{\beta}}$ which forces $\forceQ_2 \Vdash x \in A_k(y)$. Note that in that case we collect all the reals which witness $( {\ast} {\ast} {\ast})$ for some quadruple to form the set $R_{\alpha_{\beta}}$. 

As $x \in A_m(y) \cap  D^1_{y,m,k} (R)$, we let $\beta'> \beta$ be the first stage such that $W[G_{\beta'}] \models x \in A_m(y)$. By Lemma \ref{productallowable}, $W[G_{\beta}]$ thinks that $\forceQ_2 \times \forceP_{\beta \beta'}$ is $\alpha_{\beta}$-allowable with respect to $E_{\alpha_{\beta}}$, yet 
$\forceQ_2 \times \forceP_{\beta \beta'} \Vdash x \in A_m(y) \cap A_k(y)$. Thus, at stage $\beta$, we must have been in case A. This is a contradiction.
\end{proof}

The next lemma will finish the proof of our theorem:

\section{Lifting to $M_n$}

The ideas presented can be used to construct a universe in which $\bf{\Sigma}^1_{n+3}$-separation can be separated from $\Pi^1_{n+3}$-reduction. We just state it without an attempt to prove it.
\begin{theorem}
Assuming that $M_n$ exists, there is a model of $\bf{\Sigma}^1_{n+3}$-separation over which there is a pair of $\Pi^1_{n+3}$-sets, which can not be reduced by any pair of $\bf{\Pi}^1_{n+3}$-sets.
\end{theorem}
Its proof is a direct application of the ideas from \cite{Ho2} which can be used to translate the argument for the third level of the projective hierarchy to $M_n$.
As there are no new ideas needed, and the translation works in a very similar manner to \cite{Ho2}, we will not go into any details here.

\section{Open questions}
We end with several questions which are related to this article.
\begin{question}
Does there exist a universe in which $\bf{\Sigma}^1_3$-separation holds, but $\Pi^1_n$-reduction fails for any $n \ge 3$?
\end{question}
Note for this question that in our construction of the failure of $\Pi^1_3$-reduction, we used Shoenfield absoluteness, or rather the upwards absoluteness of $\Sigma^1_3$-formulas, several times. Thus a failure of $\Pi^1_n$-reduction would need new ideas and arguments.

\begin{question}
Can one force a universe of $\bf{\Sigma}^1_4$-separation in which $\Pi^1_4$-reduction fails over just $L$?
\end{question}

By a classical result of Novikov, for any projective pointclass $\Gamma$, it is impossible to have $\Gamma$ and $\check{\Gamma}$-reduction simultaneously. The case  for separation is still unknown.
\begin{question}
Can one force a universe where $\Sigma^1_3$- and $\Pi^1_3$-separation hold simultaneously?
\end{question}

\bibliographystyle{plain} 
\bibliography{references}

\end{document}